\documentclass[11pt]{article}
\usepackage{amsmath,amsthm,amssymb}
\usepackage[T1]{fontenc}
\usepackage[utf8]{inputenc}
\usepackage[dvipdf]{graphicx}
\usepackage{color}
\usepackage{epstopdf}
\usepackage{dsfont}
\usepackage{mathtools}
\mathtoolsset{showonlyrefs=true}
\usepackage{tikz}
\usepackage{enumerate}
\usepackage{mathrsfs}
\usepackage[normalem]{ulem}
\usepackage[colorlinks=true,citecolor=red,linkcolor=db,urlcolor=blue,pdfstartview=FitH]{hyperref}
\usepackage[numbers]{natbib}
\usepackage{comment}

\definecolor{db}{RGB}{0, 0, 130}
\definecolor{rp}{rgb}{0.25, 0, 0.75}
\definecolor{dg}{rgb}{0, 0.6, 0}

\newtheorem{theorem}{Theorem}[section]

\newtheorem{definition}{Definition}[section]
\newtheorem{prop}[definition]{Proposition}
\newtheorem{corollary}[definition]{Corollary}

\newtheorem{assumption}[definition]{Assumption}
\newtheorem{lemma}[definition]{Lemma}

\newtheorem{proposition}[definition]{Proposition}
\newtheorem{remark}[definition]{Remark}

\def\Ac{\mathcal A}

\def\Cc{\mathcal C}
\def\Fc{\mathcal F}
\def\Gc{\mathcal G}

\def\Lc{\mathcal L}

\def\Pc{\mathcal P}

\def\Wc{\mathcal W}

\def\E{\mathbb{E}}

\def\N{\mathbb N}
\def\P{\mathbb P}
\def\R{\mathbb R}

\def\x{\times}

\def\eps{\varepsilon}
\def\dd{\,\mathrm{d}}
\def\1{\mathbf{1}}

\newcommand{\norm}[1]{\left\lVert #1\right\rVert}
\newcommand{\abs}[1]{\left\lvert #1\right\rvert}
\DeclareMathOperator*{\esssup}{ess\,sup}
\title{Sharp Wasserstein Convergence Rates for Empirical Path Laws of It\^o Processes}
\author{ Xihao He
        \footnote{Department of Mathematics, University of Southern California. xihaohe@usc.edu}
        \and 
        Fengyi Yuan
        \footnote{Corresponding author. School of Science and Engineering, the Chinese University of Hong Kong (Shenzhen). yuanfengyi@cuhk.edu.cn}}

\date{}

\begin{document}
\maketitle

\begin{abstract}
We establish the sharp logarithmic order $(\log N)^{-1/2}$ for the expected
$p$-Wasserstein distance, induced by the supremum norm, between the empirical
law of $N$ independent copies of a continuous It\^o process and their common
path law. We only assume that the initial condition and the drift and diffusion integrands are
controlled by a time-uniform random upper bound with a finite $\rho$-moment for
some $\rho>p\geq1$. Under this assumption, we use an adaptive random time interval partition argument, which leads to a $(\log n)^{-1/2}$ functional quantization
rate. A general transfer principle then converts the
quantization estimate into a mean estimate and nonasymptotic deviation bounds
for equal-weight empirical laws. Applications
include empirical path-law estimates for path-dependent SDEs and a path-space
propagation-of-chaos estimate for path-dependent McKean--Vlasov interacting
particle systems.
\end{abstract}

\noindent\textbf{MSC2010 subject classification:}
Primary 60B10; Secondary 49Q22, 60H10, 60K35.

\noindent\textbf{Keywords:}
Empirical path laws, Itô processes,
functional quantization, path-dependent stochastic differential equations,
propagation of chaos.

\tableofcontents
\section{Introduction}

Let $X$ be a continuous $\R^d$-valued It\^o process on $[0,T]$, let
$\mu:=\Lc(X)$ be its law on $C([0,T];\R^d)$ equipped with
$d_\infty(x,y):=\|x-y\|_\infty$, and let
\begin{equation}
    \mu_N:=\frac1N\sum_{i=1}^N\delta_{X_i}
\end{equation}
be the empirical law of $N$ independent copies of $X$. We study the rate at
which $\mu_N$ converges to $\mu$ in the $p$-Wasserstein distance associated
with $d_\infty$. For Wiener measure, the empirical-measure estimates of
Boissard and Le Gouic \cite{BoissardLeGouic2014} and the quantization
asymptotics of Dereich and Scheutzow \cite{DereichScheutzow2006} identify the
scale $(\log N)^{-1/2}$. This raises two questions. Does the same scale persist
for It\^o processes? Can one also obtain quantitative deviations or concentration for the distance $\Wc_p(\mu_N,\mu)?$

We give an affirmative answer to both questions under a general assumption. Assumption~\ref{assum:regularity} allows a random initial condition and general
progressively measurable drift and diffusion integrands. The initial condition
and both integrands are controlled by a time-uniform random upper bound
$\Lambda_X$ satisfying $\E\Lambda_X^\rho<\infty$ for some $\rho>p\geq1$.
Under this assumption, Theorem~\ref{thm:endpoint-path-law} establishes
\begin{equation}
    \E\big[\Wc_p(\mu_N,\mu)\big]
    \leq C(\log N)^{-1/2},
    \qquad N\geq2.
\end{equation}
The exponent $1/2$ cannot be improved over the class in
Assumption~\ref{assum:regularity}. Indeed, Brownian motion belongs to this class
and has a matching quantization lower bound, as recorded in
Remark~\ref{remark:uniform-optimality}. We regard this as a worst-case lower bound, since degenerate and
deterministic processes may converge faster.

The moment assumption also yields a nonasymptotic deviation estimate.
Theorem~\ref{cor:polynomial-path-law-concentration} separates an exponential
term for the truncated empirical law from a polynomial remainder determined by
the available $\rho$-moment. Under the additional assumption that the path law
satisfies a quadratic transport inequality,
Proposition~\ref{prop:transport-concentration} gives exponential concentration
around the same logarithmic mean scale. The transport assumption is specific
to this transport-based concentration estimate and is not part of the general
moment hypothesis.

We give two applications of the path-law estimate. First,
Proposition~\ref{prop:SDE} treats path-dependent SDEs with jointly Borel
measurable coefficients of linear growth and an initial condition in
$L^\rho$ for some $\rho>p\geq1$, whenever a continuous adapted solution exists.
The diffusion coefficient may be unbounded or degenerate.
Second, Proposition~\ref{thm:app} combines the empirical estimate for
independent copies of a McKean--Vlasov limit process with synchronous coupling. It
gives the same $(\log N)^{-1/2}$ mean rate for path-space propagation of chaos
in a path-dependent mean-field particle system. We only require the coefficients to be
globally Lipschitz in the stopped path and the path law, and are uniformly
bounded at the origin. The initial distribution is assumed to have an $r$-moment for some $r>p$, as
specified in Assumption~\ref{ass:coefficients}.

The proof our main results contains three key components: functional quantization for processes with uniformly bounded initial value and semimartingale characteristics, localization according
to dyadic ranges of $\Lambda_X$, and a transfer from functional quantization to
empirical measures.

The first ingredient, Theorem~\ref{lemma:adaptive-dyadic-coding}, gives a
functional quantization result for general It\^o processes. Its construction
uses a dyadic partition of $[0,T]$ selected according to the realized path
oscillations. Both the result and the proof seem to be new to the literature to the best of our knowledge, and are interesting in their own right. Suppose that a continuous semimartingale $Y$ has
its initial value, drift, and diffusion integrand bounded by a deterministic
constant $G$. For $0<\delta\leq G$, the construction recursively bisects a time
interval only when the realized path deviates from its value at the left
endpoint by more than $\delta$. Conditional exponential martingale estimates
show that deep refinements are sufficiently rare. Encoding the admissible tree
shapes and quantized endpoint increments then produces a deterministic
codebook $\Ac_\delta$ satisfying
\begin{equation}
    \log|\Ac_\delta|
    \leq C\frac{G^2}{\delta^2}.
\end{equation}
We also construct an event $\Gc_\delta$ by controlling the number of dyadic intervals, such that
\begin{equation}
    \P(\Gc_\delta^c)\leq \exp(-cG^2/\delta^2),
    \qquad
    d_\infty(Y,\Ac_\delta)\leq C\delta
    \quad\text{on }\Gc_\delta.
\end{equation}
A moment estimate controls the exceptional event, and
the choice $\delta\asymp G(\log n)^{-1/2}$ gives the quantization rate
$(\log n)^{-1/2}$.  

The second ingredient removes the deterministic bound $G$.
Proposition~\ref{prop:endpoint-functional-quantization} partitions the
probability space according to dyadic ranges of $\Lambda_X$. On each resulting
event, we radially truncate the initial condition and the drift and diffusion
integrands. The
resulting process agrees with $X$ on that event and satisfies the hypotheses of
Theorem~\ref{lemma:adaptive-dyadic-coding}. A decreasing allocation of codepoints keeps the combined
codebook within the prescribed cardinality. The strict gap $q<\rho$ makes the
errors over these events summable and controls the remaining contribution from
large values of $\Lambda_X$, preserving the
exponent $1/2$ under our general assumption. For comparison, standard H\"older-ball localization arguments for empirical
diffusion-path laws under stronger transport and exponential-integrability
assumptions yield sub-critical logarithmic bounds of the form
$(\log n)^{-1/2+\varepsilon}$ for every fixed $\varepsilon>0$; see  Dereich \cite{Dereich2008}. With the path-dependent dyadic
partition and localization according to dyadic ranges of $\Lambda_X$, we reach the Brownian rate under the
present (more general) finite-moment assumption and establish
$e_{n,q}(X)\leq C(\log n)^{-1/2}$ for every $1\leq q<\rho$, where
$e_{n,q}$ is the smallest $L^q$ approximation error over deterministic
codebooks containing at most $n$ paths.

The third ingredient addresses a structural gap between quantization and
empirical sampling. Functional quantization uses a deterministic finite
codebook whose cells generally have unequal masses, whereas an empirical
measure has random support and assigns mass $1/N$ to every observation.
Proposition~\ref{prop:quantization-to-empirical} supplies a general transfer
principle that requires only a quantization bound of order
$(\log n)^{-1/2}$ and a $\rho$-moment. The argument truncates the law, projects
both the truncated law and its empirical measure onto a common finite
codebook, and estimates the resulting multinomial fluctuation of the cell
masses. A bounded-difference estimate controls deviations of the truncated
empirical Wasserstein functional, while the moment assumption controls the
polynomial tail remainder. Balancing these terms retains the scale
$(\log N)^{-1/2}$, and
Corollary~\ref{cor:quantization-to-empirical-mean} gives the mean rate.

\subsection{Related work}\label{subsec:related-work}

Several general theories relate empirical Wasserstein convergence to the
geometry and tails of the underlying measure. Fournier and Guillin
\cite{FournierGuillin2015} establish nonasymptotic moment and concentration
bounds in Euclidean spaces under moment assumptions. Boissard
\cite{Boissard2011} obtains transport and entropy bounds for the
$1$-Wasserstein distance on Polish spaces and gives examples involving
Gaussian measures and diffusion laws. Bolley \cite{Bolley2010} develops
path-space concentration inequalities and applies them to empirical path
measures of interacting diffusions. Through covering numbers, Boissard and Le Gouic
\cite{BoissardLeGouic2014} bound the expected $p$-Wasserstein distance and
recover the order $(\log N)^{-1/2}$ for Wiener measure under the supremum norm.
Weed and Bach \cite{WeedBach2019} characterize rates on compact metric spaces
through Wasserstein dimensions, while Lei \cite{Lei2020} obtains rate-optimal
bounds for classes of Gaussian measures on unbounded functional spaces. These
results explain how infinite-dimensional geometry governs the empirical rate.
They do not directly supply the process-specific quantization estimate of order
$(\log n)^{-1/2}$ used here.

A complementary line of work studies the process-specific complexity through
functional quantization. Dereich, Fehringer, Matoussi, and Scheutzow
\cite{DereichEtAl2003} relate quantization of Gaussian measures on Banach
spaces to their small-ball probabilities. Dereich and Scheutzow
\cite{DereichScheutzow2006} obtain high-resolution formulas for fractional
Brownian motion under the supremum norm, which give the Brownian rate
$(\log N)^{-1/2}$ when the Hurst parameter is $1/2$. For diffusion processes,
Luschgy and Pag\`es \cite{LuschgyPages2006} construct quantizers attaining the
Brownian logarithmic rate for classes of one-dimensional diffusions and
selected multidimensional models under their stated assumptions. Creutzig,
Dereich, M\"uller-Gronbach, and Ritter \cite{CreutzigEtAl2009} obtain the same
order for autonomous multidimensional diffusions under smoother assumptions.
Dereich \cite{Dereich2008} further establishes high-resolution asymptotic
formulas under supremum-norm distortion and additional regularity assumptions,
identifying the martingale component and its quadratic variation as the
source of the leading functional complexity. These works obtain the Brownian
order for Gaussian models or under additional structural and regularity
assumptions. Their conclusions do not directly cover general progressively
measurable integrands with a finite moment.
There is also a structural difference between the two approximation problems.
Functional quantization
optimizes over deterministic codebooks with generally unequal cell masses,
whereas empirical measures have random support and equal weights. The transfer
principle developed here connects these two settings while retaining the
order $(\log N)^{-1/2}$ and providing a deviation estimate.

The rest of the paper is organized as follows. Section~2 states the main mean
and concentration results. Section~3 gives the applications to path-dependent
SDEs and McKean--Vlasov particle systems. Section~4 develops the functional
quantization and quantization-to-empirical arguments and proves the main
results.

\section{Main results}
\subsection{Notation and conventions}
\label{subsec:notation}

Throughout the paper, $T>0$ is a fixed time horizon and
$d,m\in\N$ are positive integers. For $k\in\N$, the Euclidean norm on
$\R^k$ is denoted by $\abs{\cdot}$. For a matrix
$A\in\R^{d\times m}$, $\norm{A}$ denotes its Hilbert--Schmidt norm.

We write
\[
    \Cc^d:=C([0,T];\R^d)
\]
for the space of continuous $\R^d$-valued paths on $[0,T]$, equipped
with the supremum norm and the associated metric
\[
    \norm{x}_\infty
    :=
    \sup_{0\leq t\leq T}\abs{x_t},
    \qquad 
    d_\infty(x,y):=\norm{x-y}_\infty,
    \qquad x,y\in\Cc^d.
\]
The space $\Cc^d$ is always endowed with the Borel $\sigma$-field
generated by $d_\infty$. For $x\in\Cc^d$ and $t\in[0,T]$, the stopped
path $x_{t\wedge\cdot}\in\Cc^d$ is defined by
    $x_{t\wedge\cdot}(s):=x_{t\wedge s},
     0\leq s\leq T.$
The symbol $0$ denotes either the zero vector or the zero path,
according to context.

Let $(E,d_E)$ be a Polish metric space. We denote by $\Pc(E)$ the set
of Borel probability measures on $E$ and, for $p\geq1$, by
\[
    \Pc_p(E)
    :=
    \left\{
        \mu\in\Pc(E):
        \int_E d_E(x,x_0)^p\,\mu(dx)<\infty
    \right\},
\]
where $x_0\in E$ is arbitrary. This definition does not depend on the
choice of $x_0$. For $\mu,\nu\in\Pc(E)$, the set of their couplings is
denoted by
\[
    \Pi(\mu,\nu)
    :=
    \left\{
        \pi\in\Pc(E\times E):
        \pi(\,\cdot\,\times E)=\mu,\ 
        \pi(E\times\,\cdot\,)=\nu
    \right\}.
\]
For
$\eta,\zeta\in\Pc_p(\Cc^d)$, we use the distinct notation
\[
    \Wc_p(\eta,\zeta)
    :=
    \left(
        \inf_{\pi\in\Pi(\eta,\zeta)}
        \int_{\Cc^d\times\Cc^d}
        \norm{x-y}_\infty^p\,\pi(dx,dy)
    \right)^{1/p}
\]
for the $p$-Wasserstein distance on path space.

If $F:E\to E'$ is measurable and $\mu\in\Pc(E)$, then
$F_\#\mu:=\mu\circ F^{-1}$ denotes the pushforward of $\mu$ under $F$.
For an $E$-valued random variable $Z$, its law is denoted by
\[
    \Lc(Z):=\P\circ Z^{-1}.
\]
The Dirac probability measure at $x$ is denoted by $\delta_x$.

Unless stated otherwise, all random variables and stochastic processes
are defined on a filtered probability space
\[
    (\Omega,\Fc,(\Fc_t)_{0\leq t\leq T},\P)
\]
satisfying the usual conditions and carrying an $m$-dimensional
$(\Fc_t)$-Brownian motion $W$. For a continuous local martingale $M$,
its quadratic variation is denoted by $\langle M\rangle$. 

\vspace{0.5em}

The following assumption packages the size of the initial condition and of
the two It\^o characteristics into a single random upper bound $\Lambda_X$. Conditional on
an event of the form $\{\Lambda_X\leq G\}$, the process lies in the
deterministically bounded regime treated by Theorem \ref{lemma:adaptive-dyadic-coding}. The
moment condition on $\Lambda_X$ is then used to aggregate these bounded
regimes and to control the contribution from large values of $\Lambda_X$.
\begin{assumption}
\label{assum:regularity}
Fix $\rho>1$. Let $X_0$ be an $\mathcal F_0$-measurable
$\mathbb R^d$-valued random variable, and let
\[
a:[0,T]\times\Omega\longrightarrow\mathbb R^d,
\qquad
H:[0,T]\times\Omega\longrightarrow\mathbb R^{d\times m}
\]
be progressively measurable processes. We assume that the process $X$
admits the It\^o representation
\[
X_t
=
X_0
+
\int_0^t a_s\,ds
+
\int_0^t H_s\,dW_s,
\qquad
0\leq t\leq T,
\]
almost surely.

Define
\[
\Lambda_X
:=
1
\vee |X_0|
\vee \operatorname*{ess\,sup}_{0\leq s\leq T}|a_s|
\vee \operatorname*{ess\,sup}_{0\leq s\leq T}\|H_s\|,
\]
where the essential suprema are taken with respect to Lebesgue measure
on $[0,T]$. We assume that
\[
\mathbb E\Lambda_X^\rho<\infty.
\]
\end{assumption}
\begin{remark}
\label{remark:scope-general-ito}
$\mathrm{(i)}$
Assumption~\ref{assum:regularity} is a condition on the realized It\^o
characteristics, not on a particular coefficient representation. In
particular, $a$ and $H$ may be path dependent, random, and merely
progressively measurable. No continuity in the state variable, Markov
property, ellipticity, or lower bound on $H$ is required. The assumption also
includes degenerate and deterministic processes. Accordingly, the optimality
claim below concerns the worst-case exponent on subclasses with a fixed bound
on $\E\Lambda_X^\rho$, rather than a matching lower bound for every admissible
law.

\vspace{0.5em}
\noindent$\mathrm{(ii)}$ In fact, the It\^o process in Assumption \ref{assum:regularity} can be extended to continuous adapted processes $X$ with $\Lc(X)=\mu$ such that
\begin{equation}\label{eq:semimartingale-rep}
 X_t=X_0+\int_0^t a_s\,\dd s+M_t,
 \qquad 0\le t\le T,
\end{equation}
where $a$ is progressively measurable, $M$ is a $d$-dimensional continuous local martingale with $M_0=0$, and
\[
 \langle M\rangle_t=\int_0^t c_s\,\dd s
\]
for a predictable process $c_s$ with values in the nonnegative symmetric matrices.  Correspondingly,
\begin{equation}\label{eq:envelope}
 \Lambda_X=1\vee\abs{X_0}\vee\esssup_{0\le s\le T}\abs{a_s}
 \vee\esssup_{0\le s\le T}\sqrt{Tr(c_s)}.
\end{equation}
\end{remark}

\subsection{Mean Wasserstein rate}\label{subsec:mean-rate}
The first main theorem identifies the sampling rate in path space.
Its significance is that the logarithmic exponent is the Brownian exponent
$1/2$ throughout the full class of Assumption~\ref{assum:regularity}; the
possibly unbounded and random size of the characteristics affects the
constant but does not produce an $\varepsilon$-loss in the exponent.
\begin{theorem}[Empirical path-law rate]
\label{thm:endpoint-path-law}
Let $\rho >p\ge1$. Under Assumption \ref{assum:regularity}, let
\begin{equation}
    \mu:=\Lc(X),
    \qquad
    \mu_N:=\frac1N\sum_{i=1}^N\delta_{X_i},
\end{equation}
where $X_1,\dots,X_N$ are independent copies of $X$. Then there exists
$C=C\bigl(d,T,p,\rho,\E\Lambda_X^\rho\bigr)>0$ such that
\begin{equation}
    \E\big[\Wc_p(\mu_N,\mu)\big]
    \le
    C(\log N)^{-1/2},
    \qquad N\ge2.
\end{equation}
\end{theorem}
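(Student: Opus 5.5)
The proof will combine the three ingredients described in the introduction, in order. First, I will invoke Proposition~\ref{prop:endpoint-functional-quantization}, which rests on Theorem~\ref{lemma:adaptive-dyadic-coding} and the dyadic localization in $\Lambda_X$. It gives, for a fixed $q$ with $p<q<\rho$, the quantization bound
\[
e_{n,q}(X)\le C_1(\log n)^{-1/2},\qquad n\ge2,
\]
with $C_1$ depending only on $d,T,q,\rho,\E\Lambda_X^\rho$. I will also record the path moment $\E\|X\|_\infty^{\rho'}<\infty$ for some $p<\rho'\le\rho$. It follows from $\|X\|_\infty\le \Lambda_X(1+T)+\sup_t|\int_0^tH\,dW|$ together with BDG, using $\int_0^T\|H_s\|^2ds\le T\Lambda_X^2$. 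This gives $\E\|X\|_\infty^\rho\le C(1+\E\Lambda_X^\rho)$.

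Second, I will apply the transfer principle, Proposition~\ref{prop:quantization-to-empirical}, in the form of Corollary~\ref{cor:quantization-to-empirical-mean}, to $\mu=\Lc(X)$. Its inputs are exactly a quantization bound of order $(\log n)^{-1/2}$ and a $\rho$-moment, and its output is $\E\,\Wc_p(\mu_N,\mu)\le C(\log N)^{-1/2}$. If I had to reproduce the transfer, I would proceed as follows.
\begin{enumerate}[(a)]
\item Truncate $\mu$ to the ball $\{\|x\|_\infty\le R\}$. The remainder, both under $\mu$ and under $\mu_N$, costs $O(R^{-(\rho-p)/p})$ in expectation by Markov and the moment bound.
\item Take an optimal $n$-codebook $\Ac$ with Voronoi projection $\pi_\Ac$ and write
\[
\Wc_p(\mu_N,\mu)\le \Wc_p(\mu_N,\pi_{\Ac\#}\mu_N)+\Wc_p(\pi_{\Ac\#}\mu_N,\pi_{\Ac\#}\mu)+\Wc_p(\pi_{\Ac\#}\mu,\mu).
\]
The first and third terms are at most $e_{n,p}$ in expectation, by Jensen and because $\mu_N$ has mean $\mu$ under the empirical average.
\item Bound the middle term, a discrete transport between multinomial cell masses, by $2R\big(\sum_{j\le n}|\hat w_j-w_j|\big)^{1/p}$. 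Its expectation is at most $2R\,(n/N)^{1/(2p)}$.
\item Choose $n=\lfloor N^{1/2}\rfloor$ and $R=(\log N)^{k}$ polynomially. Then the multinomial term is $N^{-1/(4p)}(\log N)^k\ll(\log N)^{-1/2}$, the truncation term is negligible, and $e_{n,p}\le C(\log N)^{-1/2}$ since $\log n\asymp\log N$.
\end{enumerate}

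Third, I will handle small $N$ and the constants. For $N$ below a fixed threshold, $\E\Wc_p(\mu_N,\mu)\le 2(\E\|X\|_\infty^p)^{1/p}$ is bounded, so it is absorbed into $C$. All constants depend only on $d,T,p,\rho,\E\Lambda_X^\rho$.

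I expect the main obstacle to lie in the quantization step, in particular in keeping the exponent exactly $1/2$ when the dyadic-range events $\{2^{k-1}<\Lambda_X\le 2^k\}$ are aggregated. The codebook allocation must sum to at most $n$, and the errors on each event must stay summable, which is where $q<\rho$ is used. Since that step is supplied by Proposition~\ref{prop:endpoint-functional-quantization}, the remaining delicate point for this theorem is the multinomial bound in the transfer. There I must make sure that the crude $\ell^1$ cell-mass estimate, raised to the power $1/p$, stays polynomially small; this is what forces $n$ to be a small power of $N$. That choice costs only a constant factor in $(\log n)^{-1/2}$.
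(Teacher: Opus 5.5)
Your proposal is correct and follows the paper's route: the paper proves the theorem by combining the moment bound \eqref{eq:endpoint:supremum-moment} and Proposition~\ref{prop:endpoint-functional-quantization} (with $q=p$, which is enough, so your $q\in(p,\rho)$ is unnecessary though harmless) with Corollary~\ref{cor:quantization-to-empirical-mean}. The proof of that corollary is exactly your truncation, projected-codebook and multinomial argument, with $n=\lfloor N^{1/2}\rfloor$ and $R=(\log N)^{p/[2(\rho-p)]}$. Two details in your sketch need adjusting:
\begin{enumerate}[(a)]
\item The diameter bound $2R$ in your step (c) requires the codebook to lie in the $R$-ball. The paper arranges this by pushing a near-optimal codebook through the $2$-Lipschitz truncation $T_R$. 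Use a near-optimal codebook, since an optimal one need not exist in $\Cc^d$.
\item Your exponent $k$ must satisfy $k\ge p/[2(\rho-p)]$ for the truncation error to be $O((\log N)^{-1/2})$.
\end{enumerate}
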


\begin{proof}
See Subsection~\ref{subsec:proofs-main-results}.
\end{proof}

\begin{remark}
\label{remark:uniform-optimality}
\noindent$\mathrm{(i)}$ The exponent $1/2$ cannot be improved over the class in
Assumption \ref{assum:regularity}. Indeed, one-dimensional Brownian motion
belongs to this class. For every probability measure $\nu$ supported on at
most $N$ paths,
\begin{equation}\label{eq:lower-bound-BM}
    \Wc_p(\Lc(W),\nu)
    \ge e_{N,p}(W),
\end{equation}
and the supremum-norm quantization error of Brownian motion satisfies
\begin{equation}
    e_{N,p}(W)
    \asymp
    (\log N)^{-1/2}
\end{equation}
by Dereich and Scheutzow \cite{DereichScheutzow2006}. Applying \eqref{eq:lower-bound-BM} to each realization of the empirical measure gives the matching
lower order. A matching lower bound need not hold for every individual
diffusion because the assumptions allow degenerate and deterministic models.

\vspace{0.5em}
\noindent$\mathrm{(ii)}$ The condition $\rho>p$ is essential to our proofs. In the
quantization argument it ensures summability of the errors over the dyadic
ranges of $\Lambda_X$. In the empirical transfer argument it makes the
truncation bias negligible. The logarithmic exponent $1/2$ comes from
Theorem~\ref{lemma:adaptive-dyadic-coding} and is therefore independent of the
size of the moment gap.

\vspace{0.5em}
\noindent$\mathrm{(iii)}$
The main difficulty lies in Theorem \ref{lemma:adaptive-dyadic-coding}, since this error is the dominant rate, details are in Remark \ref{rem:adaptive-coding-literature}. The usual and most common idea in current literatures is to take use of $\alpha$-H\"older continuity of SDE solutions to make time and space discretization with $\alpha < 1/2$. This will result in the rate of $C_\eps(\log N)^{-1/2+\eps}$, and $C_\eps$ will go to $+\infty$ when $\eps$ tends to $0$, or consider nondegenerate or deterministic volatility so that one can transfer the process to a Brownian motion.
\end{remark}

\subsection{Concentration inequality}
\label{sec:concentration}

The mean estimate describes the typical approximation scale but does not
separate moderate fluctuations from rare observations with large supremum
norm. The next theorem makes this separation explicit. The exponential term is produced
by bounded differences after truncation, whereas the polynomial term records
the cost of controlling the unbounded tails using only a finite
$\rho$-moment.

\begin{theorem}
\label{cor:polynomial-path-law-concentration}
Let $\rho>p\geq1$, and let $\mu$ and $\mu_N$ be as in
Theorem~\ref{thm:endpoint-path-law}. Set
\begin{equation}
    \beta_{\rho,p}
    :=
    \min\left\{\frac{\rho}{p}-1,\frac{\rho}{2p}\right\}.
\end{equation}
Under Assumption~\ref{assum:regularity}, there exist constants $c_p>0$ and
$C=C\bigl(d,T,p,\rho,\E\Lambda_X^\rho\bigr)>0$ such that, for every
$N\geq2$ and $t>0$,
\begin{equation}\label{eq:main_concen}
    \P\left(
        \Wc_p(\mu_N,\mu)
        >C(\log N)^{-1/2} + t
    \right)
    \le
    \exp\left(
        -\frac{c_pNt^{2p}}{(\log N)^{p^2/(\rho-p)}}
    \right)
    +CN^{-\beta_{\rho,p}}t^{-\rho}.
\end{equation}
After increasing $C$ if necessary, one also has
\begin{equation}
    \P\left(
        \Wc_p(\mu_N,\mu)
        >C(\log N)^{-1/2}
    \right)
    \le
   CN^{-\beta_{\rho,p}}(\log N)^{\rho/2},
    \qquad N\ge2.
\end{equation}
\end{theorem}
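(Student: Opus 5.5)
We combine the quantization rate with a truncation and bounded-difference scheme. First, under Assumption~\ref{assum:regularity}, Burkholder--Davis--Gundy and the bound $|a_s|,\|H_s\|,|X_0|\le\Lambda_X$ give $\E\|X\|_\infty^\rho\le C\,\E\Lambda_X^\rho<\infty$. Combined with Proposition~\ref{prop:endpoint-functional-quantization}, this yields $e_{n,q}(X)\le C(\log n)^{-1/2}$ for $q<\rho$.

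Fix a truncation level $R>0$ and the radial retraction $\pi_R(x):=x\min\{1,R/\|x\|_\infty\}$, which is $1$-Lipschitz on $\Cc^d$. Set $\mu^R:=(\pi_R)_\#\mu$ and $\mu_N^R:=\frac1N\sum_i\delta_{\pi_R(X_i)}$. By the triangle inequality,
\[
\Wc_p(\mu_N,\mu)\le \Wc_p(\mu_N,\mu_N^R)+\Wc_p(\mu_N^R,\mu^R)+\Wc_p(\mu^R,\mu).
\]
The deterministic bias satisfies $\Wc_p(\mu^R,\mu)^p\le \E[\|X\|_\infty^p\1_{\|X\|_\infty>R}]\le CR^{p-\rho}$. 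The random tail term satisfies $\Wc_p(\mu_N,\mu_N^R)^p\le \frac1N\sum_i \|X_i\|_\infty^p\1_{\|X_i\|_\infty>R}=:S_N$.

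For the middle term we use Proposition~\ref{prop:quantization-to-empirical} and Corollary~\ref{cor:quantization-to-empirical-mean}, applied to the bounded law $\mu^R$. Its quantization error is at most that of $\mu$, since composing a codebook with $\pi_R$ does not increase the error. This gives $\E\,\Wc_p(\mu_N^R,\mu^R)\le C(\log N)^{-1/2}$ uniformly in $R$. Replacing one sample changes $\Wc_p^p(\mu_N^R,\mu^R)$ by at most $(2R)^p/N$. McDiarmid's inequality applied to $F:=\Wc_p^p(\mu_N^R,\mu^R)$, together with $a^p-b^p\ge (a-b)^p$ for $a\ge b\ge0$, then yields
\[
\P\bigl(\Wc_p(\mu_N^R,\mu^R)>\E\Wc_p(\mu_N^R,\mu^R)+t\bigr)\le \exp\bigl(-cNt^{2p}/R^{2p}\bigr).
\]
The passage from $\E F^{1/p}$ to $(\E F)^{1/p}$ is harmless up to the constant, because the mean bound of the corollary is proved for $(\E F)^{1/p}$.

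Next choose $R:=(\log N)^{p/(2(\rho-p))}$. Then the bias is $CR^{(p-\rho)/p}=C(\log N)^{-1/2}$, which is absorbed into $C(\log N)^{-1/2}$, and $R^{2p}=(\log N)^{p^2/(\rho-p)}$, which is exactly the exponent in \eqref{eq:main_concen}. It remains to bound $\P(S_N^{1/p}>t)$, the polynomial remainder and the step I expect to be the main obstacle. I would write $S_N\le \E[\cdots]+(S_N-\E S_N)$ and absorb the mean $CR^{p-\rho}$ into the $(\log N)^{-1/2}$ term. For the fluctuation I would apply Markov's inequality with exponent $\rho/p$ to the centred i.i.d.\ sum, using Rosenthal or von Bahr--Esseen. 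If $\rho/p\le2$, von Bahr--Esseen gives $\E|S_N-\E S_N|^{\rho/p}\le CN^{1-\rho/p}$. If $\rho/p>2$, Rosenthal gives $CN^{-\rho/(2p)}$ for the variance part, and the dominant contribution is again bounded by $N^{1-\rho/p}\le N^{-\rho/(2p)}$. Hence $\P(S_N>t^p)\le CN^{-\beta_{\rho,p}}t^{-\rho}$ with $\beta_{\rho,p}=\min\{\rho/p-1,\rho/(2p)\}$. Splitting $t$ into thirds among the three terms and enlarging constants gives \eqref{eq:main_concen}.

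For the second claim, take $t=(\log N)^{-1/2}$ in \eqref{eq:main_concen} and enlarge $C$ by one. The polynomial term becomes $CN^{-\beta_{\rho,p}}(\log N)^{\rho/2}$. The exponential term is $\exp\bigl(-c_pN(\log N)^{-p-p^2/(\rho-p)}\bigr)$, which decays faster than any power of $N$ and is therefore dominated by $CN^{-\beta_{\rho,p}}(\log N)^{\rho/2}$ after increasing $C$. This holds for large $N$, and for small $N$ the bound is trivial because the right-hand side is at least $1$ once $C$ is large.
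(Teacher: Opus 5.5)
Your proposal is correct and is essentially the paper's argument: the paper obtains the theorem by applying Proposition~\ref{prop:quantization-to-empirical} with $Z=X$. That proof uses the same ingredients you do: truncation at $R=(\log N)^{p/(2(\rho-p))}$, von Bahr--Esseen/Rosenthal for the tail term, a bounded-difference inequality with coordinate constant $(2R)^p/N$ for the truncated functional, and finally $t=(\log N)^{-1/2}$. Since you invoke that proposition and its corollary anyway, you could simply cite it directly.

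One small slip: the radial retraction is not $1$-Lipschitz for the supremum norm, only $2$-Lipschitz, as in \eqref{eq:conc:truncation-lipschitz}. So the quantization error of $\mu^R$ is at most twice that of $\mu$, which only changes constants.
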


\begin{proof}
See Subsection~\ref{subsec:proofs-main-results}.
\end{proof}

\begin{remark}
\label{remark:two-deviation-mechanisms}
The two terms in \eqref{eq:main_concen} have different origins and should not
be conflated. The exponential term controls fluctuations of the truncated
empirical measure on a bounded path-space ball. The polynomial term controls
the empirical truncation error and is dictated by the available moment. 
\end{remark}

For an exponential deviation around the mean, we record a standard transport-inequality formulation.
It is the path-space counterpart of the approach in Boissard \cite{Boissard2011}
and Bolley \cite{Bolley2010}.

\begin{proposition}
\label{prop:transport-concentration}
Let $\rho>p\geq1$. Under Assumption~\ref{assum:regularity}, let $\mu$ and
$\mu_N$ be as in Theorem~\ref{thm:endpoint-path-law}. Suppose in addition
that, for some $C_T>0$, $\mu$
satisfies the quadratic transport inequality
\begin{equation}
    \Wc_2^2(\nu,\mu)
    \le
    2C_T\,\mathrm H(\nu\mid\mu)
\end{equation}
for every probability measure $\nu$ on $\Cc^d$, where $\mathrm H$ denotes
relative entropy. Then there exists
$C=C\bigl(d,T,p,\rho,\E\Lambda_X^\rho\bigr)>0$ such that, for every $t>0$
and $N\ge2$,
\begin{equation}
    \P\left(
        \Wc_p(\mu_N,\mu)
        >C(\log N)^{-1/2}+t
    \right)
    \le
    \exp\left(
        -\frac{N^{2/\max\{p,2\}}t^2}{2C_T}
    \right).
    \label{eq:transport-concentration}
\end{equation}
In particular, the exponent in \eqref{eq:transport-concentration} is
$-Nt^2/(2C_T)$ when $1\le p\le2$.
\end{proposition}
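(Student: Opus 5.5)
The plan is the standard transport-inequality route of Bolley and Gozlan--Léonard: Lipschitz concentration of the empirical functional, plus the mean bound of Theorem~\ref{thm:endpoint-path-law}.

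\emph{Step 1: tensorization.} The quadratic transport inequality $T_2(C_T)$ for $\mu$ on $(\Cc^d,d_\infty)$ tensorizes without dimension loss (Marton's argument). Hence $\mu^{\otimes N}$ satisfies $T_2(C_T)$ on $(\Cc^d)^N$ with the $\ell^2$-product metric
\[
d_2\bigl((x_i),(y_i)\bigr):=\Bigl(\sum_{i=1}^N\norm{x_i-y_i}_\infty^2\Bigr)^{1/2}.
\]
By the Bobkov--G\"otze / Marton characterization, every $L$-Lipschitz $F$ with respect to $d_2$ then satisfies
\[
\P\bigl(F>\E F+t\bigr)\le\exp\bigl(-t^2/(2C_TL^2)\bigr).
\]

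\emph{Step 2: Lipschitz constant.} Set $F(x_1,\dots,x_N):=\Wc_p\bigl(\frac1N\sum_i\delta_{x_i},\mu\bigr)$. By the triangle inequality and the coupling that matches $x_i$ with $y_i$,
\[
|F(x)-F(y)|\le\Wc_p\Bigl(\frac1N\sum_i\delta_{x_i},\frac1N\sum_i\delta_{y_i}\Bigr)\le\Bigl(\frac1N\sum_i\norm{x_i-y_i}_\infty^p\Bigr)^{1/p}.
\]
If $p\le2$, Jensen (power mean inequality) bounds this by $\bigl(\frac1N\sum_i\norm{x_i-y_i}_\infty^2\bigr)^{1/2}=N^{-1/2}d_2(x,y)$, so $L=N^{-1/2}$. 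If $p>2$, use $\norm{\cdot}_{\ell^p}\le\norm{\cdot}_{\ell^2}$ to get the bound $N^{-1/p}d_2(x,y)$, so $L=N^{-1/p}$. In both cases $L=N^{-1/\max\{p,2\}}$, and therefore $1/L^2=N^{2/\max\{p,2\}}$, which is the exponent in \eqref{eq:transport-concentration}.

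\emph{Step 3: combine with the mean.} Theorem~\ref{thm:endpoint-path-law} gives $\E F\le C(\log N)^{-1/2}$. Consequently $\{F>C(\log N)^{-1/2}+t\}\subset\{F>\E F+t\}$, and Step 1 yields exactly \eqref{eq:transport-concentration}.

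Measurability and integrability of $F$ need a brief check. $F$ is Lipschitz, hence continuous, on $(\Cc^d)^N$. It is finite because $\mu\in\Pc_p$, which follows from the Burkholder--Davis--Gundy inequality and $\E\Lambda_X^\rho<\infty$ with $\rho>p$. Note also that $T_2$ on a Polish space tensorizes for arbitrary product metrics of $\ell^2$ type.

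The main obstacle is the precise form of the concentration constant. The Gozlan--Léonard duality gives Gaussian concentration with constant $2C_T$ under the normalization $\Wc_2^2\le 2C_T\mathrm H$ only after checking the convention: the inequality $\Wc_1\le\Wc_2\le\sqrt{2C_T\mathrm H}$ yields, via Bobkov--G\"otze, $\int e^{\lambda(F-\E F)}\le e^{\lambda^2C_TL^2/2}$, and hence the tail $\exp(-t^2/(2C_TL^2))$. So the constants match. For the tensorization step I will cite Gozlan--Léonard, or reprove it by Marton's inductive coupling and conditional entropy chain rule.
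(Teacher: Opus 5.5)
Your proposal is correct and follows essentially the same route as the paper. You tensorize the quadratic transport inequality to $\mu^{\otimes N}$ with the $\ell^2$ product metric, bound the Lipschitz constant of $(x_1,\dots,x_N)\mapsto\Wc_p\bigl(\frac1N\sum_i\delta_{x_i},\mu\bigr)$ by $N^{-1/\max\{p,2\}}$ via the equal-label coupling, apply the Gozlan--L\'eonard mean-centered Gaussian concentration, and bound the mean by Theorem~\ref{thm:endpoint-path-law}.
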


\begin{proof}
See Subsection~\ref{subsec:proofs-main-results}.
\end{proof}

\subsection{Arbitrarily slow convergence rate without It\^o decomposition}
\label{subsec:arbitrarily-slow-empirical-laws}

The logarithmic estimate above depends essentially on It\^o's semimartingale decomposition.  If one only assumes that the common law is a Borel probability measure on path space, then there is not any meaningful rate of empirical Wasserstein convergence, even among laws with compact support.  
This is in sharp contrast to the finite-dimensional case. Indeed, Fournier and Guillin \cite[Theorem~1]{FournierGuillin2015} imply that, for every
compactly supported law on $\R^d$ and every $p\geq1$, there exist constants
$C,\alpha>0$ such that
$\E[W_p(\mu_N,\mu)]\leq C N^{-\alpha}$.  More generally, Weed and Bach
\cite[Theorem~1]{WeedBach2019} establish a polynomial expected rate on a compact
metric space whenever the upper $p$-Wasserstein dimension of the measure is
finite. Inspired by Berend and Kontorovich
\cite[Proposition~4]{BerendKontorovich2012}, the next proposition constructs a compactly supported law on $\Cc^d$ whose approximation by every
$N$-point measure, and hence whose empirical Wasserstein convergence, is bounded
below by an arbitrarily slowly decaying sequence.

\begin{proposition}
\label{prop:arbitrarily-slow-empirical-laws}
Fix $p\geq 1$, and let $(r_N)_{N\geq 1}$ be a nonincreasing sequence satisfying
$0<r_N\leq 1$ and $r_N\to 0$.  There exists a measure
$\mu\in\mathcal P(\Cc^d)$ whose support is a compact subset of
$\{x\in C^d:\|x\|_\infty\leq 1\}$ such that, for every $N\geq 1$ and every
probability measure $\nu$ supported on at most $N$ paths,
\[
\Wc_p(\mu,\nu)\geq 2^{-1/p}r_N.
\]
In particular, if $X^1,\ldots,X^N$ are independent with common law $\mu$ and
\[
\mu_N=\frac1N\sum_{i=1}^N\delta_{X^i},
\]
then, for every realization of the sample,
\[
\Wc_p(\mu_N,\mu)\geq 2^{-1/p}r_N.
\]
\end{proposition}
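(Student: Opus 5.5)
The plan is a Berend--Kontorovich type construction. I would build $\mu$ as a purely atomic measure on a countable compact set of ``bump'' paths with pairwise disjoint supports. Then I would show that any $N$-point measure must leave most of the mass at a distance comparable to the bump heights.

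\emph{The set and the separation property.} Fix disjoint closed intervals $I_k\subset[0,T]$ and continuous bumps $\phi_k\geq0$ supported in $I_k$ with $\|\phi_k\|_\infty=1$, say acting on the first coordinate of $\R^d$. For heights $1\geq h_1\geq h_2\geq\dots$ with $h_k\to0$, set $x_k:=h_k\phi_k$. Then $K:=\{0\}\cup\{x_k:k\geq1\}$ is compact and contained in the unit ball of $\Cc^d$. Because the supports are disjoint, $\|x_j-x_k\|_\infty=\max(h_j,h_k)$ for $j\neq k$. The key combinatorial fact is that a single path $y$ cannot satisfy both $\|y-x_j\|_\infty<h_j/2$ and $\|y-x_k\|_\infty<h_k/2$ with $j\neq k$: the triangle inequality would give $\max(h_j,h_k)<(h_j+h_k)/2$. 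So each support point $y_i$ of $\nu=\sum_{i\leq N}\nu_i\delta_{y_i}$ ``captures'' at most one atom. Since the $y_i$ lie at distance $\geq h_k/2$ from every uncaptured $x_k$, any coupling $\pi\in\Pi(\mu,\nu)$ must transport the mass $w_k:=\mu(\{x_k\})$ of each uncaptured atom over distance at least $h_k/2$. There are at most $N$ captured indices, so
\[
\Wc_p^p(\mu,\nu)\ \geq\ \min_{|S|\leq N}\sum_{k\notin S}w_k\Big(\frac{h_k}{2}\Big)^p .
\]
If $k\mapsto w_kh_k^p$ is nonincreasing, the minimum is attained at $S=\{1,\dots,N\}$. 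The lower bound then becomes the tail sum $2^{-p}\sum_{k>N}w_kh_k^p$.

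\emph{Choosing weights and heights.} It remains to choose $(w_k)$ with $\sum_k w_k=1$ and heights $h_k\leq 1$, with $w_kh_k^p$ nonincreasing and the tail sums slowly decaying. The requirement is
\[
\sum_{k>N}w_k h_k^p\ \geq\ 2^{p-1}r_N^p\qquad\text{for every }N\geq1 .
\]
The point is that the weights may be made as tail-heavy as we like, since only $r_N\to0$ is prescribed. I would first try $h_k\equiv$ a constant for blocks of indices and make the tail $T_N:=\sum_{k>N}w_k$ decrease to $0$ no faster than a fixed multiple of $r_N^p$. This is possible by taking $T_N$ a nonincreasing sequence tending to $0$ with $T_0=1$ and setting $w_k:=T_{k-1}-T_k$, spreading each decrement over long blocks so that the $w_k$ are nonincreasing. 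Then I would let $h_k$ decrease slowly, ensuring $h_k\to0$ while keeping $h_M^p(T_N-T_M)$ large for a suitable $M=M(N)\gg N$.

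\emph{Main obstacle and the empirical statement.} The main obstacle is this bookkeeping step, namely obtaining the exact constant $2^{-1/p}$ stated in the proposition rather than some constant $c_p$. If the simple separation radius $h_k/2$ loses too much, I would refine the geometry. One option is bumps of alternating sign or placed on distinct coordinates, so that distinct atoms are at distance $h_j+h_k$. Then the capture radius becomes $h_k$, and the factor $2^{-p}$ disappears, leaving only the factor $1/2$ from the tail allocation. The ``in particular'' part is immediate: for every realization, $\mu_N$ is supported on at most $N$ paths, so the deterministic lower bound applies pathwise.
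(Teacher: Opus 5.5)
Your capture argument and the pathwise deduction of the empirical statement are correct. The proposal does not, however, reach the stated constant $2^{-1/p}$, and the obstruction is geometric, not bookkeeping.

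With disjointly supported bumps the capture radius is $h_k/2$, so you need $\sum_{k>N}w_kh_k^p\ge 2^{p-1}r_N^p$. Since $h_k\to0$ and infinitely many $w_k$ must be positive, the left side is strictly less than $\sum_{k>N}w_k\le 1$. The requirement therefore fails whenever $2^{p-1}r_N^p\ge1$, for instance when $r_1=1$, which the hypothesis $0<r_N\le1$ allows.

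No choice of weights and heights rescues this family. For any $\mu$ on your $K$, the path $y:=\frac12\sum_k h_k\phi_k e_1$ is continuous and satisfies $\|x-y\|_\infty\le\frac12$ for every $x\in K$. Hence $\Wc_p(\mu,\delta_y)\le\frac12<2^{-1/p}=2^{-1/p}r_1$ for $p>1$ and $r_1=1$. For $p=1$ the statement is not contradicted, but your lower bound $\frac12\sum_{k\ne k^*}w_kh_k$ is still strictly below $r_1/2$.

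Your proposed repair does not change this. For bumps with disjoint time supports, at each time at most one of $x_j,x_k$ is nonzero, so $\|x_j-x_k\|_\infty=\max(h_j,h_k)$ whatever the signs or coordinates. Overlapping supports on distinct coordinates give only $\sqrt{h_j^2+h_k^2}$. Alternating signs on a common support separate only pairs of opposite sign.

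The missing idea is antipodal separation, $\|x-x'\|_\infty=\|x\|_\infty+\|x'\|_\infty$ at the relevant scale, so that the capture radius is the full height. The paper obtains this from random signs. It takes $\mu$ to be the law of $\sum_j a_j\eps_j\phi_je_1$ with $a_j=r_{\max\{1,\lfloor2^{j-2}\rfloor\}}$. Two paths whose first $m$ signs differ at some $j\le m$ differ by $2a_j\ge2a_m$ at the peak $t_j$. With $m=\lceil\log_2(2N)\rceil$, each support point of $\nu$ is within $a_m$ of at most one of the $2^m\ge2N$ equal-mass cylinders. This leaves mass at least $\frac12$ at distance at least $a_m\ge r_N$, which gives exactly $2^{-1/p}r_N$.

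If you want to keep a countable atomic measure, replace the disjoint bumps by nested paths $x_j=h_j\psi_je_1$ with $|\psi_j|\le1$, $\psi_j\equiv1$ on $[0,s_{j+1}]$, $\psi_j(s_j)=-1$, and $s_j=T2^{-j}$. Then $x_j(s_k)-x_k(s_k)=(h_j+h_k)e_1$ for $j<k$, and the capture radius becomes $h_k$. Your bookkeeping then closes as follows:
\begin{itemize}
\item take $c_k=w_kh_k^p$ to be the decrements of a convex sequence $\tau_N\downarrow0$ with $\tau_0=\frac34$ and $\tau_N\ge r_N^p/2$;
\item choose $h_k\downarrow0$ slowly enough that $\sum_kc_kh_k^{-p}\le1$;
\item put the leftover mass on the zero path.
\end{itemize}
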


\begin{proof}
    See Subsection \ref{sec:slowrate}.
\end{proof}

\section{Applications}
We now illustrate how the abstract results are used. The first application is
a direct verification theorem for a single path-dependent SDE. The second is a quantitative convergence result for a path dependent interacting particle system. In particular, the
conditional i.i.d.\ empirical approximation of the Mckean-Vlasov equation is propagated through
the mean-field dynamics.
\subsection{Convergence of empirical process law of path-dependent SDE}
The coefficient functions below may depend on the complete stopped path and
are assumed only to be Borel measurable. Linear growth is sufficient because
it yields a supremum-moment estimate for the solution and hence an integrable
random bound on the realized drift and diffusion characteristics.
\begin{assumption}
\label{ass:SDE}
Fix $\rho>1$. The maps
\[
b:[0,T]\x \Cc^d\longrightarrow\mathbb{R}^d,
\qquad
\sigma:[0,T]\x \Cc^d
\longrightarrow\mathbb{R}^{d\times m}
\]
are Borel measurable. There exists a constant $L\geq 1$ such that
\[
|b(t,x)|+\|\sigma(t,x)\|
\leq L(1+\|x\|_\infty),
\qquad
(t,x)\in[0,T]\times\Cc^d.
\]
The initial condition $X_0$ is $\mathcal{F}_0$-measurable and satisfies
\[
\mathbb{E}|X_0|^\rho<\infty.
\]
We assume that there exists an $\mathbb{R}^d$-valued continuous
$(\mathcal{F}_t)$-adapted process $X$ satisfying
\[
X_t
=
X_0
+
\int_0^t b(s,X_{s\wedge\cdot})\,ds
+
\int_0^t \sigma(s,X_{s\wedge\cdot})\,dW_s,
\qquad
0\leq t\leq T,
\]
almost surely.
\end{assumption}

The next proposition shows that the abstract endpoint theorem is stable under
this standard linear-growth formulation. No continuity or Lipschitz condition
is needed for the convergence estimate itself once existence of a continuous
adapted solution has been assumed.
\begin{prop}
\label{prop:SDE}
Let $\rho >p\ge1$. Under Assumption \ref{ass:SDE}, let
$X_1,\dots,X_N$ be independent copies of $X$. Then there exists
$C=C\bigl(d,T,p,\rho,L,\E\abs{X_0}^\rho\bigr)>0$ such that
\begin{equation}
    \E\left[
        \Wc_p\left(
            \frac1N\sum_{i=1}^N\delta_{X_i},
            \Lc(X)
        \right)
    \right]
    \le
    C(\log N)^{-1/2},
    \qquad N\ge2.
\end{equation}
\end{prop}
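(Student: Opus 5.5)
We reduce Proposition~\ref{prop:SDE} to Theorem~\ref{thm:endpoint-path-law} by checking Assumption~\ref{assum:regularity} with the same exponent $\rho$. Set $a_s:=b(s,X_{s\wedge\cdot})$ and $H_s:=\sigma(s,X_{s\wedge\cdot})$. Since $X$ is continuous and adapted, the map $(s,\omega)\mapsto X_{s\wedge\cdot}(\omega)\in\Cc^d$ is progressively measurable. Indeed, on $[0,t]\times\Omega$ it is measurable with respect to $\mathcal B([0,t])\otimes\Fc_t$, because the Borel $\sigma$-field of $\Cc^d$ is generated by the evaluations $x\mapsto x_r$, and $(s,\omega)\mapsto X_{s\wedge r}$ is continuous in $s$ and adapted. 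Composing with the jointly Borel maps $b$ and $\sigma$ shows that $a$ and $H$ are progressively measurable, so $X$ has the required It\^o representation.

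Next we bound the envelope. By the linear growth condition, for every $s$,
\[
|a_s|\vee\|H_s\|\le L(1+\|X_{s\wedge\cdot}\|_\infty)\le L(1+\|X\|_\infty).
\]
Hence $\Lambda_X\le 1\vee|X_0|\vee L(1+\|X\|_\infty)\le 2L(1+\|X\|_\infty)$, using $|X_0|\le\|X\|_\infty$ and $L\ge1$. It therefore suffices to show $\E\|X\|_\infty^\rho\le C(d,T,\rho,L,\E|X_0|^\rho)$. Once this holds, Theorem~\ref{thm:endpoint-path-law} applies, and its constant depends only on $d,T,p,\rho$ and a bound on $\E\Lambda_X^\rho$. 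That bound depends only on $d,T,\rho,L,\E|X_0|^\rho$, which gives the claimed dependence of $C$. One should check that the constant in the theorem is monotone in, or depends only on an upper bound for, $\E\Lambda_X^\rho$; this is implicit in how it is stated.

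The main obstacle is the supremum moment estimate, because $X_0$ is only in $L^\rho$ and a priori $\E\|X\|_\infty^\rho$ could be infinite. This rules out a naive Gronwall argument. We would localize in two ways.

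1. Introduce $\tau_k:=\inf\{t:|X_t|\ge k\}\wedge T$.
2. Condition on $\Fc_0$ to handle the initial value: on $\{|X_0|\le R\}$, or more cleanly via the process $X\1_{\{|X_0|\le R\}}$, which solves the same equation with initial value $X_0\1_{\{|X_0|\le R\}}$ because the indicator is $\Fc_0$-measurable.
3. Set $f(t):=\E\bigl[\sup_{r\le t\wedge\tau_k}|X_r|^\rho\,\1_{A}\bigr]$ for an $\Fc_0$-event $A$. This is finite by the stopping.
4. Estimate $f(t)$ using H\"older for the drift and BDG for the martingale. For $\rho\ge2$,
\[
\E\sup_{r\le t}\Bigl|\int_0^{r\wedge\tau_k}H\,dW\Bigr|^\rho\1_A\le C\,\E\Bigl(\int_0^{t\wedge\tau_k}\|H_s\|^2ds\Bigr)^{\rho/2}\1_A\le C\int_0^t\bigl(1+f(s)\bigr)\,ds.
\]
5. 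For $1<\rho<2$, BDG gives $\E(\int\|H\|^2)^{\rho/2}\le C\E[(1+\sup|X|)^{\rho/2}(\int(1+\sup|X|)\,ds)^{\rho/2}]$. Young's inequality $xy\le \tfrac12 x^2+\tfrac12y^2$ then absorbs half of $\E\sup|X|^\rho$ into the left side, leaving a Gronwall-type inequality.
6. Gronwall yields $f(T)\le C(1+\E|X_0|^\rho\1_A)$. Letting $k\to\infty$ by monotone convergence (since $\tau_k\uparrow T$ by continuity), and then taking $A=\{|X_0|\le R\}$ with $R\to\infty$, gives $\E\|X\|_\infty^\rho\le C(1+\E|X_0|^\rho)$.

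Combining this with the envelope bound gives $\E\Lambda_X^\rho<\infty$ with an explicit bound, and the proposition follows from Theorem~\ref{thm:endpoint-path-law}.
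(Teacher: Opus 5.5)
Your proposal is correct and follows the paper's route: progressive measurability of $a_s=b(s,X_{s\wedge\cdot})$ and $H_s=\sigma(s,X_{s\wedge\cdot})$, then a stopped BDG/H\"older/Young/Gr\"onwall estimate giving $\E\|X\|_\infty^\rho\le C(1+\E|X_0|^\rho)$, then the envelope bound $\Lambda_X\le C(1+\|X\|_\infty)$, and finally Theorem~\ref{thm:endpoint-path-law}. Two small corrections: the extra $\Fc_0$-localization is unnecessary, since $\sup_{r\le t\wedge\tau_k}|X_r|\le k\vee|X_0|\in L^\rho$ and the paper stops only at $\tau_n$ (your aside that $X\1_{\{|X_0|\le R\}}$ solves the same equation is false unless $b(\cdot,\mathbf 0)$ and $\sigma(\cdot,\mathbf 0)$ vanish); and for $\rho<2$ you need the weighted Young inequality $xy\le\varepsilon x^2+C_\varepsilon y^2$, as the paper uses, rather than the $\tfrac12$-version, so that the term can be absorbed despite the BDG constant.
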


\begin{proof}
See Subsection~\ref{subsec:proof-sde-application}.
\end{proof}

\subsection{Limit theory for path-dependent McKean--Vlasov SDEs with common noise}

In the second application, we consider a path-dependent McKean--Vlasov SDE and its associated interacting particle system. In order to establish finite-particle convergence of this system, our path law convergence result in Section \ref{subsec:mean-rate} is essential, as the marginal empirical law estimate on $\mathbb R^d$ from Fournier and Guillin \cite{FournierGuillin2015} is not sufficient. The proof compares the interacting system with
conditional independent copies of the Mckean-Vlasov limit. The empirical error of those
copies is controlled by the preceding quantization theory, while the
difference between the two systems is controlled by synchronous coupling and
Gr\"onwall stability.

\vspace{0.5em}

Fix a time horizon $T>0$, integers $d,m\geq 1$, and an exponent $p\geq 1$.
Let
\[
(b, \sigma,\sigma_0):[0,T]\times\Cc^d\times\Pc_p(\Cc^d)
\longrightarrow \R^d \x \R^{d\times m} \x \R^{d \x l}
\]
be jointly Borel measurable. 
\begin{assumption}\label{ass:coefficients}
There are constants $L>0$ and $r>p$ such that, for every $t\in[0,T]$,
$x,y\in\Cc^d$, and $\nu,\eta\in\Pc_p(\Cc^d)$,
\[
\begin{split}
    &
    \abs{b(t,x,\nu)-b(t,y,\eta)}
    ~+~
    \norm{\sigma(t,x,\nu)-\sigma(t,y,\eta)}
    ~+~
    \norm{\sigma_0(t,x,\nu)-\sigma_0(t,y,\eta)}
    \\\leq&~
    L\bigl(\norm{x-y}_\infty+\Wc_p(\nu,\eta)\bigr),
\end{split}
\]
and
\[
\abs{b(t,\mathbf 0,\delta_{\mathbf 0})}
+
\norm{\sigma(t,\mathbf 0,\delta_{\mathbf 0})}
+
\norm{\sigma_0(t,\mathbf 0,\delta_{\mathbf 0})}
\leq L.
\]
The initial random variable $\xi$ is independent of the driving Brownian
motion and satisfies
\[
\E\abs{\xi}^{r}<\infty.
\]
\end{assumption}

Let $Y$ be the unique strong solution of the path-dependent McKean--Vlasov
stochastic differential equation
\[
\begin{split} 
Y_t
=
\xi
+
\int_0^t b(s,Y_{s\wedge\cdot},\mu_s)\dd s
+
\int_0^t \sigma(s,Y_{s\wedge\cdot},\mu_s)\dd W_s
+
\int_0^t \sigma_0(s,Y_{s\wedge\cdot},\mu_s)\dd W^0_s,
\\
\mu_s:=\Lc(Y_{s\wedge\cdot}|W^0),
\qquad
\mu:= \mu_T.
\end{split}
\]

For each $N\geq2$, let $(\xi^i,W^i)_{1\leq i\leq N}$ be independent copies
of $(\xi,W)$, and let the interacting particles solve
\[
X_t^{i,N}
=
\xi^i
+
\int_0^t b(s,X_{s\wedge\cdot}^{i,N},\mu_s^N)\dd s
+
\int_0^t \sigma(s,X_{s\wedge\cdot}^{i,N},\mu_s^N)\dd W_s^i
+
\int_0^t \sigma_0(s,X_{s\wedge\cdot}^{i,N},\mu_s^N)\dd W_s^0,
\]
where
\[
\mu_s^N
:=
\frac1N\sum_{j=1}^N\delta_{X_{s\wedge\cdot}^{j,N}}
\in\Pc_p(\Cc^d),
\qquad
\mu^N := \mu^N_T.
\]

The Lipschitz and origin bounds imply the linear-growth estimate
\[
\abs{b(t,x,\nu)}+\norm{\sigma(t,x,\nu)}+\norm{\sigma_0(t,x,\nu)}
\leq
L\bigl(1+\norm{x}_\infty+\Wc_p(\nu,\delta_{\mathbf 0})\bigr).
\]
Under these assumptions, the path-dependent McKean--Vlasov equation has a
unique strong solution by the standard fixed-point argument on stopped-path
law flows. The particle system also has a unique strong solution. Indeed, for
two path configurations $(x^1,\ldots,x^N)$ and $(y^1,\ldots,y^N)$ in
$(\Cc^d)^N$, matching paths with the same index gives, for every
$t\in[0,T]$,
\[
\Wc_p\left(
\frac1N\sum_{j=1}^N\delta_{x_{t\wedge\cdot}^j},
\frac1N\sum_{j=1}^N\delta_{y_{t\wedge\cdot}^j}
\right)
\leq
\left(
\frac1N\sum_{j=1}^N
\norm{x_{t\wedge\cdot}^j-y_{t\wedge\cdot}^j}_\infty^p
\right)^{1/p}.
\]
Consequently, the particle coefficients are globally Lipschitz in the
stopped path configuration, and the usual Picard iteration for functional
SDEs applies.

\begin{prop}\label{thm:app}
Under Assumption~\ref{ass:coefficients}, there is a constant
\[
C=C\bigl(p,r,d,m,T,L,\E\abs{\xi}^{r}\bigr)
\]
such that, for every $N\geq2$,
\[
\E\Wc_p(\mu^N,\mu)
\leq
C(\log N)^{-1/2}.
\]
\end{prop}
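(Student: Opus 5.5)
We compare the particle system with conditionally independent copies of the McKean--Vlasov limit, coupled synchronously. For $1\le i\le N$, let $Y^i$ be the strong solution of the limit equation driven by $(\xi^i,W^i,W^0)$ with the \emph{same} conditional flow $\mu_s=\Lc(Y_{s\wedge\cdot}\mid W^0)$. Conditionally on $W^0$, the paths $Y^1,\dots,Y^N$ are i.i.d.\ with law $\mu$. Set $\bar\mu^N_s:=\frac1N\sum_j\delta_{Y^j_{s\wedge\cdot}}$. The triangle inequality gives
\[
\Wc_p(\mu^N,\mu)\le \Wc_p(\mu^N,\bar\mu^N_T)+\Wc_p(\bar\mu^N_T,\mu).
\]
Matching indices gives $\Wc_p(\mu^N,\bar\mu^N_T)\le \bigl(\frac1N\sum_i\norm{X^{i,N}-Y^i}_\infty^p\bigr)^{1/p}$. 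It therefore suffices to bound $\E\,\Wc_p(\bar\mu^N_T,\mu)$ and the particle-to-copy discrepancy, each by $C(\log N)^{-1/2}$.

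\textbf{Empirical term.} Conditionally on $W^0=w$, the path $Y^1$ is an It\^o process in the sense of Remark~\ref{remark:scope-general-ito}(ii). Its drift is $b(s,Y_{s\wedge\cdot},\mu_s)$. Its martingale part is $\int\sigma\,dW+\int\sigma_0\,dW^0$, where $W^0$ is treated as frozen. To avoid conditioning subtleties, I would instead apply the argument to the joint system. Let $\Lambda:=1\vee\abs{\xi}\vee L(1+\norm{Y}_\infty+\sup_s\Wc_p(\mu_s,\delta_0))$. By linear growth, $\Lambda$ dominates the characteristics. Standard BDG/Gr\"onwall estimates give $\E\norm{Y}_\infty^r\le C(1+\E\abs{\xi}^r)$, and $\Wc_p(\mu_s,\delta_0)\le(\E[\norm{Y}^p_\infty\mid W^0])^{1/p}$ has an $r$-th moment by conditional Jensen since $r>p$. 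Hence $\E\Lambda^r<\infty$ with $r>p$.

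The delicate point is that the $Y^i$ are only conditionally i.i.d. I would handle this by running the proof of Theorem~\ref{thm:endpoint-path-law} conditionally. Given $W^0$, $Y$ solves an SDE driven by $W$ alone, with random but $W^0$-measurable time-dependent coefficients. More precisely, after enlarging $\Fc_0$ by $\sigma(W^0)$ in an initial-enlargement sense, the $W^0$-integral is no longer a martingale. The cleanest route is therefore the following. Proposition~\ref{prop:endpoint-functional-quantization} and the transfer principle, Proposition~\ref{prop:quantization-to-empirical}, only use the quantization rate and moment of the conditional law. Apply them with $\mu^w=\Lc(Y\mid W^0=w)$, obtaining $\E[\Wc_p(\bar\mu^N_T,\mu)\mid W^0=w]\le C(w)(\log N)^{-1/2}$. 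Here $C(w)$ depends on $w$ only through $\E[\Lambda^r\mid W^0=w]$, and the quantization error bound $e_{n,q}$ of the conditional law must be obtained from the unconditional one via the joint It\^o structure of $(Y,W^0)$. If the constants enter polynomially (degree at most $1$ after choosing $q<r$ appropriately), then integrating over $w$ gives $C(\log N)^{-1/2}$. This conditioning step is the main obstacle. My first attempt would be to quantize the pair $(Y,W^0)$, which is a genuine It\^o process in $\R^{d+l}$. I would use the codebook to produce, for most $w$, conditional codebooks with controlled error, via Markov's inequality on the conditional distortion. In the case without common noise ($\sigma_0\equiv0$), Theorem~\ref{thm:endpoint-path-law} applies directly.

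\textbf{Coupling term.} Set $D_t:=\frac1N\sum_i\E\norm{X^{i,N}-Y^i}_{\infty,[0,t]}^p$ (first assume $p\ge2$; for $p<2$ use the same estimate with $p$ replaced by $2$, or a BDG version valid for all $p\ge1$). BDG, H\"older and the Lipschitz assumption give
\[
D_t\le C\int_0^t\Bigl(D_s+\E\,\Wc_p^p(\mu^N_s,\mu_s)\Bigr)ds,
\]
and moreover
\[
\Wc_p(\mu^N_s,\mu_s)\le \Bigl(\tfrac1N\sum_i\norm{X^{i,N}-Y^i}_{\infty,[0,s]}^p\Bigr)^{1/p}+\Wc_p(\bar\mu^N_s,\mu_s).
\]
Since stopping is $1$-Lipschitz, $\Wc_p(\bar\mu^N_s,\mu_s)\le\Wc_p(\bar\mu^N_T,\mu)$. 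Gr\"onwall then yields $D_T\le C\,\E\,\Wc_p^p(\bar\mu^N_T,\mu)$. This is a $p$-th moment rather than a mean. I would bound it using the concentration estimate of Theorem~\ref{cor:polynomial-path-law-concentration} (conditionally), integrating the tail to obtain $(\E\Wc_p^p)^{1/p}\le C(\log N)^{-1/2}$. The polynomial remainder $N^{-\beta}$ is negligible against $(\log N)^{-p/2}$. Alternatively, I would avoid $p$-th moments by working directly with the $L^1$ norm of the supremum, via a stopping-time Gr\"onwall argument. Combining the two terms with Jensen then gives the claim.
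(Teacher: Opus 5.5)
\textbf{Assessment.} Your overall architecture matches the paper's: synchronous coupling with conditionally i.i.d.\ copies $Y^i$, index matching, and Gr\"onwall. But there is a genuine gap at exactly the step you flag as the main obstacle. When $\sigma_0\neq0$ you never actually bound $\E\,\Wc_p^p(\bar\mu^N_T,\mu)$, and the routes you sketch do not work as stated.

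\textbf{Why the sketched routes fail.} Conditionally on $W^0=w$, $Y$ is \emph{not} an It\^o process in the sense of Remark~\ref{remark:scope-general-ito}(ii). The term $\int\sigma_0\,dW^0$ becomes an integral against a frozen Brownian path. For example, $b=\sigma=0$, $\sigma_0=1$ gives $Y=\xi+w$, which has neither an absolutely continuous drift nor a martingale part. So Theorem~\ref{lemma:adaptive-dyadic-coding} and Proposition~\ref{prop:endpoint-functional-quantization} cannot be applied $w$ by $w$, and no constant $C(w)$ is available. Your alternative of quantizing $(Y,W^0)$ and using Markov's inequality ``for most $w$'' is left unquantified, and it would still have to handle the exceptional $w$'s.

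\textbf{The missing idea.} No conditional quantization rate is needed. The transfer argument of Proposition~\ref{prop:quantization-to-empirical} and Corollary~\ref{cor:quantization-to-empirical-mean} uses the quantization hypothesis only through a single \emph{deterministic} codebook $\Ac$ with $|\Ac|\le N^{1/2}$. With $Z=Y$ and $\Gc=\Fc^{W^0}_T$, every error term is linear in a conditional expectation:
\begin{itemize}
\item the truncation bias satisfies $\Wc_p^p(\eta^{\Gc},(T_R)_\#\eta^{\Gc})\le\E[(\norm{Z}_\infty-R)_+^p\mid\Gc]$;
\item both projection errors are at most $\E[d_\infty(T_R(Z),\Ac)^p\mid\Gc]$;
\item the multinomial term is at most $(2R)^pN^{-1/4}/2$ whatever the conditional cell masses, because the cell indicators are conditionally independent Bernoulli variables.
\end{itemize}
Taking expectations returns the unconditional quantities $\E(\norm{Z}_\infty-R)_+^p$ and $\E d_\infty(T_R(Z),\Ac)^p$. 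This is Corollary~\ref{coro:conditional_copy} (cf.\ Lemma~\ref{lem:conditional-quantization-comparison}).

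The codebook itself comes from applying Proposition~\ref{prop:endpoint-functional-quantization} to $Y$ \emph{unconditionally}, as an It\^o process driven by $(W,W^0)$ with diffusion $[\sigma\ \sigma_0]$. Your moment bound $\E\Lambda^r<\infty$ is exactly what is needed there. This route also yields the $p$-th moment bound your Gr\"onwall step requires directly, without integrating concentration tails.

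\textbf{Secondary gap for $1\le p<2$.} Your stability step is not closed in this range. Replacing $p$ by $2$ can fail when $r<2$: take $Y\equiv\xi$ with $\E\abs{\xi}^2=\infty$, so $\E\,\Wc_p^2(\bar\mu^N_T,\mu)=\infty$. BDG alone only leaves you with $\E(\int_0^th_s^2\,ds)^{p/2}$. The paper bounds this by $(\sup_{u\le t}h_u^p)^{(2-p)/2}(\int_0^th_s^p\,ds)^{p/2}$, applies Young's inequality, and absorbs $\varepsilon\,\E\sup h^p$ into the left-hand side before Gr\"onwall.
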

\begin{remark}
    The result can be extended to graphon mean field system with common noise in Bayraktar, He and Kim\cite{Erhan2025}, which obtains convergence of process law without rate under similar general assumptions. The main difficulty of obtaining convergence rate results in that paper is the lack of techniques and results in this work.
\end{remark}

\begin{proof}
See Subsection~\ref{subsec:proof-mckean-vlasov-application}.
\end{proof}

\section{Technical results and proofs}
\subsection{Proofs of the main results}
\label{subsec:proofs-main-results}
For a $\Cc^d$-valued random variable $Z$, $q\ge1$, and $n\in\N$, define
the $n$-point functional quantization error by
\begin{equation}
    e_{n,q}(Z)
    :=
    \inf_{\substack{\Ac\subset\Cc^d\\1\le |\Ac|\le n}}
    \left(
        \E\big[d_\infty(Z,\Ac)^q\big]
    \right)^{1/q},
    \qquad
    d_\infty(z,\Ac)
    :=
    \min_{a\in\Ac}\|z-a\|_\infty .
\end{equation}

The argument in this subsection is organized in three layers. First, we use a pathwise dyadic partition method to establish a critical functional quantization result for semimartingales whose initial value and characteristics are bounded. Second,
localization according to dyadic levels of $\Lambda_X$
removes this deterministic bound under
Assumption \ref{assum:regularity}. Third, the resulting
pathwise quantization estimate is transferred to equal-weight empirical
measures by truncation and finite-codebook sampling.

\subsubsection{Functional quantization with bounded initial value and characteristics}

The local martingale estimate below controls the conditional probability that
a dyadic interval is split.
\begin{lemma}
\label{lemma:conditional-exponential-supermartingale}
Let $N$ be a real-valued continuous local martingale. Fix deterministic
times $0\le u<v\le T$, and suppose that, for some deterministic $V>0$,
\begin{equation}
    \langle N\rangle_v-\langle N\rangle_u\le V
    \qquad\text{almost surely}.
\end{equation}
Then, for every $x>0$,
\begin{equation}
    \P\left(
        \sup_{u\le t\le v}|N_t-N_u|\ge x
        \,\middle|\,\Fc_u
    \right)
    \le
    2\exp\left(-\frac{x^2}{2V}\right)
    \qquad\text{almost surely}.
\end{equation}
\end{lemma}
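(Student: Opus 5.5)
The plan is the standard exponential-supermartingale argument, carried out conditionally on $\Fc_u$. Set $M_t:=N_{(u\vee t)\wedge v}-N_u$ for $t\in[0,T]$. This is a continuous local martingale started at $0$, with $\langle M\rangle_t=\langle N\rangle_{(u\vee t)\wedge v}-\langle N\rangle_u\le V$. For fixed $\lambda>0$, define
\[
    \mathcal E^\lambda_t:=\exp\Bigl(\lambda M_t-\tfrac{\lambda^2}{2}\langle M\rangle_t\Bigr).
\]
It is a positive continuous local martingale, hence a supermartingale by Fatou's lemma. It equals $1$ for $t\le u$. Consequently $\E[\mathcal E^\lambda_\tau\mid\Fc_u]\le 1$ for every stopping time $\tau$ with $u\le\tau\le v$; this follows from optional stopping for the bounded times, together with the supermartingale property.

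For the one-sided bound, let $\tau:=\inf\{t\ge u: M_t\ge x\}\wedge v$. On the event $A:=\{\sup_{u\le t\le v}M_t\ge x\}$, continuity gives $M_\tau=x$. Since $\langle M\rangle_\tau\le V$, we get $\mathcal E^\lambda_\tau\ge \exp(\lambda x-\lambda^2V/2)$ on $A$. Hence
\[
    \P(A\mid\Fc_u)\,e^{\lambda x-\lambda^2 V/2}\le \E[\mathcal E^\lambda_\tau\1_A\mid\Fc_u]\le 1 .
\]
Optimizing with $\lambda=x/V$ yields $\P(A\mid\Fc_u)\le e^{-x^2/(2V)}$. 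Applying the same argument to $-M$, and noting that $\{\sup|M|\ge x\}$ is contained in the union of the two one-sided events, gives the factor $2$.

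The only delicate point is justifying $\E[\mathcal E^\lambda_\tau\mid\Fc_u]\le1$ conditionally rather than just in expectation. I would handle it by localizing with a sequence $\sigma_k\uparrow\infty$ that reduces $\mathcal E^\lambda$ to a true martingale on $[u,v]$, so that $\E[\mathcal E^\lambda_{\tau\wedge\sigma_k}\mid\Fc_u]=\mathcal E^\lambda_u=1$. Then I would pass to $k\to\infty$ with conditional Fatou.

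An alternative is to apply the unconditional bound to $\1_F$-weighted expectations for arbitrary $F\in\Fc_u$. This is done by multiplying by $\1_F$, which is legitimate since $\1_F$ is $\Fc_u$-measurable and $\mathcal E^\lambda_u=1$. Both routes are routine. The almost-sure nature of the hypothesis on $\langle N\rangle$ ensures that the deterministic $\lambda$ and $V$ work simultaneously on all of $\Omega$.
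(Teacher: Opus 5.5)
Your proposal is correct and follows the same route as the paper: the exponential supermartingale normalized to equal $1$ at time $u$, the first-passage time to level $x$ capped at $v$, conditional optional sampling, the choice $\lambda=x/V$, and a union bound with $-N$. Your localization-plus-conditional-Fatou argument for $\E[\mathcal E^\lambda_\tau\mid\Fc_u]\le1$ just spells out the step the paper cites as ``conditional optional sampling''; Novikov's criterion would also do, since $\langle M\rangle\le V$.
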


\begin{proof}
For $\lambda>0$, define, for $u\le t\le v$,
\begin{equation}
    Z_t^\lambda
    :=
    \exp\left(
        \lambda(N_t-N_u)
        -\frac{\lambda^2}{2}
        (\langle N\rangle_t-\langle N\rangle_u)
    \right).
\end{equation}
This is a positive local martingale starting from one at time $u$, 
hence it is a supermartingale. Let
\begin{equation}
    \tau_+
    :=
    \inf\{t\in[u,v]:N_t-N_u\ge x\}\wedge v.
\end{equation}
Continuity of $N$ and the bracket bound imply that, on
$\{\sup_{u\le t\le v}(N_t-N_u)\ge x\}$,
\begin{equation}
    Z_{\tau_+}^\lambda
    \ge
    \exp\left(\lambda x-\frac{\lambda^2V}{2}\right).
\end{equation}
Conditional optional sampling therefore gives
\begin{align}
    1
    \ge
    \E\left[Z_{\tau_+}^\lambda\mid\Fc_u\right] \
    \ge
    \exp\left(\lambda x-\frac{\lambda^2V}{2}\right)
    \P\left(
        \left\{\sup_{u\le t\le v}(N_t-N_u)\ge x\right\}
        \,\middle|\,\Fc_u
    \right).
\end{align}
Taking $\lambda=x/V$ yields
\begin{equation}
    \P\left(
        \left\{\sup_{u\le t\le v}(N_t-N_u)\ge x\right\}
        \,\middle|\,\Fc_u
    \right)
    \le
    \exp\left(-\frac{x^2}{2V}\right).
\end{equation}
Applying the same argument to $-N$ and taking a union bound proves the
claim.
\end{proof}

\begin{theorem}
\label{lemma:adaptive-dyadic-coding}
Let $q\ge1$, and let $Y$ be an $\R^d$-valued continuous semimartingale
of the form
\begin{equation}
    Y_t
    =
    Y_0
    +\int_0^t a_s\,ds
    +\int_0^t H_s\,dW_s,
    \qquad 0\le t\le T,
\end{equation}
where $a$ and $H$ are progressively measurable and, for some $G\ge1$,
\begin{equation}
    |Y_0|\le G,
    \qquad
    |a_s|\le G,
    \qquad
    \|H_s\|\le G
\end{equation}
almost surely, with the last two bounds holding for almost every $s$.
There exist positive constants $c_{d,T}$ and $C_{d,T}$, depending only on $d$
and $T$, such
that for every $0<\delta\le G$ there are a deterministic finite set
$\Ac_\delta\subset\Cc^d$, containing the zero path, and an event
$\Gc_\delta$ satisfying
\begin{equation}
    \log|\Ac_\delta|
    \le
    C_{d,T}\frac{G^2}{\delta^2},
    \qquad
    \P(\Gc_\delta^c)
    \le
    \exp\left(-c_{d,T}\frac{G^2}{\delta^2}\right),
\end{equation}
and
\begin{equation}
    d_\infty(Y,\Ac_\delta)
    \le C_{d,T}\delta
    \qquad\text{on }\Gc_\delta.
\end{equation}
Consequently, there exists $C_{d,q,T}>0$, depending only on $d,q,T$, such
that
\begin{equation}
    e_{n,q}(Y)
    \le
    C_{d,q,T}G(\log n)^{-1/2},
    \qquad n\ge2.
\end{equation}
\end{theorem}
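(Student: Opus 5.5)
The plan is to build an adaptive dyadic partition of $[0,T]$ driven by the realized path, and to encode three things: the shape of the partition tree, a quantized starting value, and quantized increments of $Y$ along the leaves. Set $K:=G^2/\delta^2\ge1$ and let $L\in\N$ be the level with $T2^{-L}\asymp \delta^2/G^2$, with a small constant chosen so that the drift contribution on any interval of length $h\le T2^{-L}$ satisfies $Gh\le\delta/4$. The tree is grown recursively from $[0,T]$. A dyadic node $I=[u,v]$ is split into its two halves if and only if
\begin{equation}
\sup_{u\le t\le v}|Y_t-Y_u|>\delta .
\end{equation}
By continuity, almost surely only finitely many splits occur. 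On every leaf the path then stays within $\delta$ of its value at the left endpoint.

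\emph{Codebook.} Suppose the tree has at most $M:=C_0K$ nodes. Let $0=t_0<t_1<\dots<t_J=T$ be the leaf endpoints, so $J\le M$. Let $\hat y_k$ be a nearest point of the grid $(\delta/\sqrt d)\,\mathbb Z^d$ to $Y_{t_k}$. Since $|Y_{t_{k+1}}-Y_{t_k}|\le\delta$, we have $|\hat y_{k+1}-\hat y_k|\le 3\delta$. Each increment therefore takes at most $C_d$ values. The starting value $\hat y_0$, which satisfies $|\hat y_0|\le 2G$, takes at most $(CG/\delta)^d\le e^{dK}$ values. Binary trees with at most $M$ nodes number at most $4^M$.

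The codeword is the piecewise linear path through $(t_k,\hat y_k)$, and $\Ac_\delta$ is the set of all such codewords together with the zero path. Hence
\begin{equation}
\log|\Ac_\delta|\le M\log 4+M\log C_d+dK\le C_{d,T}K .
\end{equation}
On a leaf $[t_k,t_{k+1}]$ both $Y$ and the interpolant stay within $O(\delta)$ of $Y_{t_k}$. So $d_\infty(Y,\Ac_\delta)\le C\delta$ on $\Gc_\delta:=\{\#\text{nodes}\le C_0K\}$.

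\emph{Tail of the tree size (main obstacle).} Nodes at depth $\le L$ number at most $2^{L+1}\le CK$ deterministically, so only the splits below depth $L$ need control. For a node $I=[u,v]$ at depth $L+j$ we have $|I|=T2^{-L-j}$, so the drift moves $Y$ by at most $\delta/4$ on $I$. The bracket of each coordinate martingale increases by at most $G^2|I|$ on $I$. Lemma~\ref{lemma:conditional-exponential-supermartingale}, applied coordinatewise with a union bound, gives
\begin{equation}
p_j:=\P(I\text{ splits}\mid\Fc_u)\le 2d\exp(-c\,2^{j})
\end{equation}
almost surely. The difficulty is that the existence of $I$ in the tree is not $\Fc_u$-measurable, because its ancestors extend past $u$.

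I would handle this block by block. Partition $[0,T]$ into the $2^L\asymp K$ depth-$L$ blocks $B_1,\dots,B_{2^L}$. The subtree below $B_i=[u_i,v_i]$ depends only on the path restricted to $B_i$, so its split count $S_i$ is $\Fc_{v_i}$-measurable. I would prove by backward induction on depth a conditional bound
\begin{equation}
\E\big[e^{\lambda S_I}\mid\Fc_u\big]\le 1+\eps_j ,
\end{equation}
with $\eps_j$ summable and small, for the subtree of a node at depth $L+j$. The inductive step decomposes on the splitting event $\{I\text{ splits}\}$. The root factor $e^{\lambda}$ is weighted by $p_j$, combined with Cauchy--Schwarz, $\E[e^{2\lambda S_{\text{left}}}e^{2\lambda S_{\text{right}}}\mid\Fc_u]$ is handled by conditioning at the midpoint, and the induction hypothesis is applied with $2\lambda$ at the next depth. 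The superexponential decay of $p_j$ absorbs the repeated doubling of $\lambda$ if one starts from a fixed small $\lambda$. (If this doubling causes trouble, I would instead take the conditional probability of ``$I$ splits and both children exist'', which is bounded by the same $p_j$.)

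Chaining the blocks in time order with the tower property gives
\begin{equation}
\E e^{\lambda S}\le(1+\eps_0)^{CK}.
\end{equation}
Markov's inequality then yields $\P(S>C_0K)\le e^{-cK}$ for $C_0$ large, which is the bound on $\P(\Gc_\delta^c)$.

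\emph{Quantization consequence.} Given $n\ge2$, take $\delta:=G\sqrt{C_{d,T}/\log n}$. If $\delta>G$ then $\log n<C_{d,T}$ is bounded, and the single zero path suffices because $\E\|Y\|_\infty^q\le C_{d,q,T}G^q$ by BDG. Otherwise $|\Ac_\delta|\le n$, and
\begin{equation}
\E\, d_\infty(Y,\Ac_\delta)^q\le (C\delta)^q+\E\big[\|Y\|_\infty^q\mathbf 1_{\Gc_\delta^c}\big],
\end{equation}
using that $0\in\Ac_\delta$. Cauchy--Schwarz together with the BDG moment bound controls the second term by
\begin{equation}
CG^q e^{-cK/2}=CG^q n^{-c'}\le CG^q(\log n)^{-q/2},
\end{equation}
which gives $e_{n,q}(Y)\le C_{d,q,T}G(\log n)^{-1/2}$.
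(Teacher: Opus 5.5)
Your proof is correct. Its architecture matches the paper's: adaptive bisection under the criterion $\sup_{t\in I}|Y_t-Y_u|>\delta$, about $G^2/\delta^2$ top-level blocks so the drift contributes at most $\delta/4$, encoding of tree shape, quantized start and bounded increments, and the Cauchy--Schwarz/BDG tail at the end. Your deterministic top $L$ levels play the role of the paper's $n_0$ root intervals.

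The genuine difference is how you bound the tail of the tree size. The paper never tracks which intervals actually belong to the tree. It bounds $s(\mathsf F)\le\sum_\ell N_\ell$, where $N_\ell$ counts \emph{all} $n_02^\ell$ potential depth-$\ell$ intervals whose oscillation exceeds $\delta$, whether or not they are nodes. Each such indicator is controlled conditionally on its own left endpoint. Chronological chaining within a level then gives $\E e^{\lambda N_\ell}\le[1+e^{-c_d2^\ell}(e^\lambda-1)]^{n_02^\ell}$. A generalized H\"older inequality with exponents $r_\ell=\frac{\pi^2}{6}(\ell+1)^2$ combines the levels into $\E e^{\sum_\ell N_\ell}\le e^{K_dn_0}$. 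This overcount removes the measurability obstacle you identify, and it costs nothing, since the expected number of oscillating depth-$\ell$ intervals is already of order $n_02^\ell e^{-c_d2^\ell}$.

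Your backward recursion on subtree sizes also closes. Set $p_j:=2de^{-c2^j}$, $a_{j,i}(\lambda):=e^{2^i\lambda}\sqrt{p_{j+i}}$ and $\eps_j(\lambda):=\sum_{k\ge1}\prod_{i<k}a_{j,i}(\lambda)$. These satisfy exactly $\eps_j(\lambda)=a_{j,0}(\lambda)\bigl(1+\eps_{j+1}(2\lambda)\bigr)$, which is what your conditional Cauchy--Schwarz step needs. The constraint $\lambda\le c2^{j-2}$ is preserved under $(j,\lambda)\mapsto(j+1,2\lambda)$. Under it, $\eps_j(\lambda)\le1$ once $\sqrt{2d}\,e^{-c/4}\le1/2$, which you can arrange by choosing $L$ large.

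You should make one point explicit: the induction must start from a truncation at a finite depth $D$, where the subtree count is just the split indicator. You then pass $D\to\infty$ by monotone convergence, since the tree has no deterministic bottom level. Separately, $(CG/\delta)^d\le e^{dK}$ should read $\le e^{C_dK}$, which is harmless.

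Your route controls the genuine tree, block by block and conditionally on $\Fc_{u_i}$. The paper's route is shorter, avoids the $\lambda$-doubling bookkeeping and the depth truncation, and gives explicit constants.
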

\begin{remark}
\label{rem:adaptive-coding-literature}
The principal probabilistic difficulty is that the partition is random and
path-dependent. Moreover, the split indicators are not independent, since
the volatility \(H\) is allowed to be an arbitrary bounded progressively
measurable process, and splits must be controlled simultaneously over
infinitely many dyadic levels. The proof resolves the dependence within
each level by ordering the intervals chronologically and iterating
conditional exponential-moment bounds. Dependence between different
levels is then handled by a generalized H\"older argument with suitably
chosen exponents. This produces an exponential moment bound for the total
number of possible splits and, consequently, an exponential tail for the
size of the realized forest.
\end{remark}

\begin{remark}
The exponent \(1/2\) is consistent with the classical functional
quantization theory for Wiener measure and Brownian diffusions. The
connection between Gaussian quantization and small-ball probabilities is
developed in \cite{DereichEtAl2003}, while precise high-resolution
quantization and entropy-coding results for fractional Brownian motion are
obtained in \cite{DereichScheutzow2006}. Constructive quantization schemes
for Brownian diffusions are studied in \cite{LuschgyPages2006}, and
process-specific asymptotic coding results under the supremum norm are
proved in \cite{Dereich2008}; see also \cite{CreutzigEtAl2009} for the
closely related problems of infinite-dimensional quadrature and
approximation of distributions. Compared with these results, the present
theorem is more
robust with respect to the underlying dynamics, i.e. it is \(d\)-dimensional,
nonasymptotic, and allows random, path-dependent, progressively measurable
coefficients, without Gaussianity, Markovianity, ellipticity, or spatial
regularity assumptions. 
\end{remark}

\noindent\textbf{Proof strategy}
The main idea is an adaptive dyadic discretization of time. One starts with
\(n_0\asymp_{d,T}G^2/\delta^2\) root intervals and recursively bisects an
interval \(I=[u,v]\) whenever
\[
    \sup_{t\in I}|Y_t-Y_u|>\delta.
\]
The initial mesh is chosen so that the drift contributes at most a fixed
fraction of \(\delta\) on every possible dyadic interval. Hence a split can
occur only if the martingale part makes an excursion of order \(\delta\).
If an interval at depth \(\ell\) has length \(h_\ell\), boundedness of \(H\)
gives quadratic variation of order at most \(G^2h_\ell\), and a conditional
martingale maximal inequality yields
\[
    \P\!\left(
        I\text{ is split}\mid\Fc_u
    \right)
    \leq
    \exp\!\left(
        -c_d\frac{\delta^2}{G^2h_\ell}
    \right)
    \leq
    \exp(-c_d2^\ell).
\]
This is the probabilistic reason why the construction works. The number of
potential intervals at depth \(\ell\) grows like \(n_02^\ell\), whereas the
probability of a \(\delta\)-oscillation decays like
\(\exp(-c_d2^\ell)\). The latter decay dominates the combinatorial growth,
so only \(O_{d,T}(G^2/\delta^2)\) intervals are needed with exponentially
high probability. This is also the origin of the Brownian scaling
\(\delta^{-2}\), and hence of the exponent \(1/2\) in the final logarithmic
quantization rate.
\begin{proof}
Write
\begin{equation}
    M_t:=\int_0^tH_s\,dW_s
\end{equation}
and fix $0<\delta\le G$. Except for the $d$-dependent constants defined
explicitly below, the subscripts on constants denoted by $C$ or $c$ list all
parameters on which they may depend. Their values may change from line to
line, and they are independent of $G$ and $\delta$.

\smallskip

\noindent {\bf Step 0. Binary-tree convention.}
Set $\kappa_d:=4+\frac{32d}{9}\log(2d)$, and choose
\begin{equation}
    n_0
    :=
    \left\lceil
        \kappa_d(1+T)\frac{G^2}{\delta^2}
    \right\rceil.
\end{equation}
Partition $[0,T]$ into $n_0$ intervals of equal length, ordered from left to
right and viewed as the roots of dyadic binary trees. If $I=[u,v]$ is a
node, its children are
\begin{equation}
    I^0:=\left[u,\frac{u+v}{2}\right],
    \qquad
    I^1:=\left[\frac{u+v}{2},v\right].
\end{equation}
Let $\mathcal D_\ell$ be the collection of all potential nodes at depth
$\ell$ below the roots. Thus every $I\in\mathcal D_\ell$ has length
$h_\ell$, where
\begin{equation}
    |\mathcal D_\ell|=n_0 2^\ell,
    \qquad
    h_\ell:=\frac{T}{n_0 2^\ell}.
\end{equation}
A finite dyadic forest $\mathsf F$ is an ordered collection of $n_0$ finite
full binary subtrees rooted at the initial intervals. Write $s(\mathsf F)$
and $L(\mathsf F)$ for its numbers of internal nodes(all the intervals that are split) and leaves(all the intervals that are not split). Then
$L(\mathsf F)=n_0+s(\mathsf F)$, and $\mathsf F$ has
$n_0+2s(\mathsf F)$ nodes in total.
Whenever $\mathsf F$ is fixed below, abbreviate $L=L(\mathsf F)$ and
enumerate its leaves from left to right as
\begin{equation}
    [t_0,t_1],\dots,[t_{L-1},t_L],
    \qquad
    0=t_0<\cdots<t_L=T.\label{eq:adaptive-leaf-partition}
\end{equation}
These endpoints are deterministic once $\mathsf F$ is fixed.
With the cutoff $s_\delta$ to be specified in Step 1, let
$
    \mathfrak F_\delta
    :=
    \{\mathsf F:s(\mathsf F)\le s_\delta\}$ denote the set of all forests with at most $s_\delta$ internal nodes. Figure~\ref{fig:dyadic-forest} below illustrates these conventions in the case
$n_0=2$.

\begin{figure}[htbp]
\centering
\begin{tikzpicture}[
    x=1cm,
    y=1cm,
    every node/.style={font=\small,inner sep=1pt}
]
    \draw (-5,2.8) -- (5,2.8);
    \foreach \x in {-5,-2.5,0,1.25,2.5,5}
        \draw (\x,2.7) -- (\x,2.9);
    \node at (-5,2.5) {$0$};
    \node at (-2.5,2.5) {$T/4$};
    \node at (0,2.5) {$T/2$};
    \node at (1.25,2.5) {$5T/8$};
    \node at (2.5,2.5) {$3T/4$};
    \node at (5,2.5) {$T$};

    \node[font=\scriptsize] at (-3.75,2.15) {$I_1^0$};
    \node[font=\scriptsize] at (-1.25,2.15) {$I_1^1$};
    \node[font=\scriptsize] at (0.625,2.15) {$(I_2^0)^0$};
    \node[font=\scriptsize] at (1.875,2.15) {$(I_2^0)^1$};
    \node[font=\scriptsize] at (3.75,2.15) {$I_2^1$};

    \draw (-5,3.4) -- (-5,3.52) -- (0,3.52) -- (0,3.4);
    \draw (0,3.4) -- (0,3.52) -- (5,3.52) -- (5,3.4);
    \node at (-2.5,3.72) {$I_1$};
    \node at (2.5,3.72) {$I_2$};
    \draw (0,3.0) -- (0,3.12) -- (2.5,3.12) -- (2.5,3.0);
    \node[font=\scriptsize] at (1.25,3.25) {$I_2^0$};

    \draw[->] (0,1.9) -- (0,1.35);
    \node[right,font=\scriptsize] at (0.12,1.62) {corresponds to};
    \node at (0,1.05) {binary forest $\mathsf F$};

    \node (i1) at (-3.4,0.45) {$I_1$};
    \node (i10) at (-4.3,-0.4) {$I_1^0$};
    \node (i11) at (-2.5,-0.4) {$I_1^1$};
    \draw (i1) -- (i10);
    \draw (i1) -- (i11);

    \node (i2) at (2.2,0.45) {$I_2$};
    \node (i20) at (1.2,-0.4) {$I_2^0$};
    \node (i21) at (3.2,-0.4) {$I_2^1$};
    \node (i200) at (0.55,-1.3) {$(I_2^0)^0$};
    \node (i201) at (1.85,-1.3) {$(I_2^0)^1$};
    \draw (i2) -- (i20);
    \draw (i2) -- (i21);
    \draw (i20) -- (i200);
    \draw (i20) -- (i201);
\end{tikzpicture}
\caption{An $n_0=2$ example of the correspondence between a dyadic
interval partition and an ordered binary forest.}
\label{fig:dyadic-forest}
\end{figure}

\smallskip

\noindent {\bf Step 1. Count the internal nodes.}
Given a path of $Y$, we define a forest $\mathsf F$ as follows. Starting from the root intervals, recursively
split a dyadic interval $I=[u,v]$ whenever
\begin{equation}
    \sup_{t\in I}|Y_t-Y_u|>\delta.\label{eq:adaptive-splitting-criterion}
\end{equation}
The interval is retained as a leaf when the reverse inequality holds.
After fixing a continuous version of $Y$, uniform continuity on $[0,T]$
shows that the resulting random forest $\mathsf F$ has finitely many nodes
almost surely. Our choice of $n_0$, with $\kappa_d\ge4$, gives
\begin{equation}
    Gh_\ell
    \le
    \frac{\delta}{4}.
\end{equation}
Noting that
\begin{equation}
    |Y_t-Y_s|
    \le G|t-s|+|M_t-M_s|,
    \qquad 0\le s\le t\le T,
\end{equation}
for any $I\in\mathcal D_\ell$ we have
\begin{equation}
    \left\{\sup_{t\in I}|Y_t-Y_u|>\delta\right\}
    \subset
    \left\{\sup_{t\in I}|M_t-M_u|>\frac{3\delta}{4}\right\}.
    \label{split:event:inclusion}
\end{equation}
For every coordinate $i$ of $M$ and every $t\in I$,
\begin{equation}
    \langle M^i\rangle_t
    -\langle M^i\rangle_u
    \le G^2(t-u)
    \le G^2h_\ell.
\end{equation}
The definition of $n_0$ also gives
\begin{equation}
    \frac{\delta^2}{G^2h_\ell}
    =
    \frac{\delta^2n_0 2^\ell}{G^2T}
    \ge
    \kappa_d\frac{1+T}{T}2^\ell
    \ge \kappa_d 2^\ell.
\end{equation}
Applying Lemma~\ref{lemma:conditional-exponential-supermartingale} with
$V=G^2h_\ell$ yields
\begin{equation}
    \P\left(
        \sup_{t\in I}|M_t^i-M_u^i|>x
        \,\middle|\,\Fc_u
    \right)
    \le
    2\exp\left(-\frac{x^2}{2G^2h_\ell}\right)
\end{equation}
for every coordinate $i$. Set $c_d:=9/(8d)$. Hence, an application of
\eqref{split:event:inclusion}, a union bound over the $d$ coordinates, and the
choice $x=3\delta/(4\sqrt d)$ give, for any $I\in \mathcal D_\ell$,
\begin{align}
    \P\left(
        \sup_{t\in I}|Y_t-Y_u|>\delta
        \,\middle|\,\Fc_u
    \right)
    &\le
    2d\exp\left(
        -\frac{9\delta^2}{32dG^2h_\ell}
    \right)  \\
    &\le
    2d\exp\left(-\frac{9\kappa_d}{32d}2^\ell\right)
    \le
    \exp(-c_d 2^\ell).\label{eq:adaptive-split-probability}
\end{align}
where the last inequality follows from the definition of $\kappa_d$ and
$2^\ell\ge1$. Let $N_\ell$ count the potential intervals at depth $\ell$
that satisfy the splitting criterion, namely,
\begin{equation}
    N_\ell
    :=
    \sum_{I=[u,v]\in\mathcal D_\ell}
    \mathbf 1_{\{\sup_{t\in I}|Y_t-Y_u|>\delta\}}.
\end{equation}
Order the intervals in $\mathcal D_\ell$ chronologically. The event
associated with each preceding interval is measurable at the left endpoint
of the next interval. Iterating \eqref{eq:adaptive-split-probability} therefore
gives, for every $\lambda>0$,
\begin{equation}
    \E\exp(\lambda N_\ell)
    \le
    \left[
        1+\exp(-c_d2^\ell)(e^\lambda-1)
    \right]^{n_0 2^\ell}.
\end{equation}
Choose numbers
\begin{equation}
    r_\ell:=\frac{\pi^2}{6}(\ell+1)^2,
    \qquad
    \sum_{\ell=0}^\infty\frac1{r_\ell}=1,
    \qquad
    K_d
    :=
    \frac{4}{c_d e}
    \frac{
        \exp\left(\dfrac{8\pi^4}{9c_d}-\dfrac{c_d}{4}\right)
    }{
        1-\exp(-c_d/4)
    }.
\end{equation}
Generalized H\"older's inequality gives, for a finite $J$,
\begin{align}
    \log\E\exp\left(
        \sum_{\ell=0}^J N_\ell
    \right)
    \le
    \sum_{\ell=0}^J
    \frac1{r_\ell}
    \log\E\exp(r_\ell N_\ell) \le
    n_0\sum_{\ell=0}^J
    \frac{2^\ell}{r_\ell}
    \exp(-c_d2^\ell)\big(e^{r_\ell}-1\big)
    \le K_dn_0.\label{eq:adaptive-finite-level-mgf}
\end{align}
In this finite-level application, the reciprocal exponents have sum smaller
than one. We add the
constant random variable $1$ with reciprocal exponent
\begin{equation}
    1-\sum_{\ell=0}^J\frac1{r_\ell}
\end{equation}
to complete the family of H\"older exponents.
To verify the bound by $K_dn_0$ in \eqref{eq:adaptive-finite-level-mgf}, note that
$r_\ell\ge1$ and $(\ell+1)^2\le8\,2^{\ell/2}$. Completing the square gives
\begin{align}
    \sum_{\ell=0}^\infty
    \frac{2^\ell}{r_\ell}
    \exp(-c_d2^\ell)\big(e^{r_\ell}-1\big)
    \le
    \exp\left(\frac{8\pi^4}{9c_d}\right)
    \sum_{\ell=0}^\infty
    2^\ell\exp\left(-\frac{c_d}{2}2^\ell\right)  
    \le K_d.
\end{align}
Indeed, the second inequality follows from
$x e^{-c_dx/2}\le4(c_de)^{-1}e^{-c_dx/4}$ for $x>0$ and
$2^\ell\ge\ell+1$. In particular, $K_d$ depends only on $d$.
Monotone convergence now gives
\begin{equation}
    \E\exp\left(
        \sum_{\ell=0}^\infty N_\ell
    \right)
    \le e^{K_dn_0}.\label{eq:exp:sum-Nl}
\end{equation}
Every internal node of $\mathsf F$ at depth $\ell$ is counted by $N_\ell$,
and hence
\begin{equation}
    s(\mathsf F)\le\sum_{\ell=0}^\infty N_\ell.\label{eq:sF:sum-Nl}
\end{equation}
Set $s_\delta:=\lfloor(K_d+1)n_0\rfloor$. Since $s(\mathsf F)$ is integer-valued, the exponential Markov inequality, in conjunction with \eqref{eq:exp:sum-Nl} and \eqref{eq:sF:sum-Nl}, gives
\begin{equation}
    \P\big(s(\mathsf F)>s_\delta\big)
    =
    \P\big(s(\mathsf F)>(K_d+1)n_0\big)
    \le
    e^{-(K_d+1)n_0}\E e^{s(\mathsf F)}
    \le e^{-n_0}
    \le \exp\left(-\frac{G^2}{\delta^2}\right).\label{eq:adaptive-forest-tail}
\end{equation}
Defining
$\Gc_\delta:=\{s(\mathsf F)\le s_\delta\}$,
we have $\mathsf F\in\mathfrak F_\delta$ on $\Gc_\delta$.

\smallskip
\noindent {\bf Step 2. Construct the codebook.}
Define the coordinatewise lattice quantizer $Q_\delta:\R^d\to\delta\mathbb Z^d$
by
\begin{equation}
    [Q_\delta(x)]^i
    :=
    \delta\left\lceil\frac{x^i}{\delta}-\frac12\right\rceil,
    \qquad 1\le i\le d.
\end{equation}
Thus ties are resolved toward the smaller lattice point. This quantizer
satisfies
\begin{equation}
    |Q_\delta(x)-x|\le\frac{\sqrt d}{2}\delta,
    \qquad
    |x^i-y^i|\le\delta
    \quad\Longrightarrow\quad
    [Q_\delta(x)]^i-[Q_\delta(y)]^i\in\delta\{-1,0,1\}
    \label{quantizer:adjacency}
\end{equation}
for every $1\le i\le d$.
Let
\begin{equation}
    \Lambda_{\delta,G}
    :={}
    \delta\mathbb Z^d
    \cap
    \left[-G-\frac\delta2,G+\frac\delta2\right]^d.
\end{equation}
For $\mathsf F\in\mathfrak F_\delta = \{F:s(F) \le s_\delta\}$, define the endpoint arrays by
\begin{equation}
    \mathcal Z_{\mathsf F}
    :=
    \left\{
        (z_0,\dots,z_{L(F)}\in(\delta\mathbb Z^d)^{L(F)+1}:
        z_0\in\Lambda_{\delta,G},\ 
        z_j-z_{j-1}\in\delta\{-1,0,1\}^d
        \text{ for }1\le j\le L(F)
    \right\}.
\end{equation}
For $\boldsymbol z=(z_0,\dots,z_{L(F)})\in\mathcal Z_{\mathsf F}$, let
$\mathsf I_{\mathsf F}[\boldsymbol z]\in\Cc^d$ be the path obtained by linear
interpolation at the leaf endpoints. More precisely, for
$t\in[t_{j-1},t_j]$, set
\begin{equation}
    \mathsf I_{\mathsf F}[\boldsymbol z](t)
    :=
    \frac{t_j-t}{t_j-t_{j-1}}z_{j-1}
    +
    \frac{t-t_{j-1}}{t_j-t_{j-1}}z_j.
\end{equation}
Define
\begin{align}
    \Ac_{\mathsf F}
    :={}
    \left\{
        \mathsf I_{\mathsf F}[\boldsymbol z]:
        \boldsymbol z\in\mathcal Z_{\mathsf F}
    \right\}, 
    \qquad
    \Ac_\delta
    :={}
    \bigcup_{\mathsf F\in\mathfrak F_\delta}\Ac_{\mathsf F}.
\end{align}
In particular, the zero path belongs to every $\Ac_{\mathsf F}$ and hence
to $\Ac_\delta$. The elementary count for ordered full binary forests gives
\begin{equation}
    |\mathfrak F_\delta|
    =
    \sum_{s=0}^{s_\delta}\frac{n_0}{n_0+2s}\binom{n+2s}{s}
    \le
    \sum_{s=0}^{s_\delta}2^{n_0+2s}
    \le
    2^{n_0+2s_\delta+1}.
\end{equation}
Since $G/\delta\ge1$, we also have
\begin{equation}
    |\Lambda_{\delta,G}|
    \le
    \left(4\frac{G}{\delta}\right)^d.
\end{equation}
For every $\mathsf F\in\mathfrak F_\delta$, the identity
$L(\mathsf F)=n_0+s(\mathsf F)$ gives
$L(\mathsf F)\le n_0+s_\delta$. Counting the initial lattice point and the
successive leaf increments therefore yields
\begin{align}
    |\Ac_\delta|
    &\le
    |\mathfrak F_\delta|
    \left(4\frac{G}{\delta}\right)^d
    3^{d(n_0+s_\delta)}  
    \le
    2^{n_0+2s_\delta+1}
    \left(4\frac{G}{\delta}\right)^d
    3^{d(n_0+s_\delta)},  \\
    \log|\Ac_\delta|
    &\le
    (n_0+2s_\delta+1)\log2
    +d\log\left(4\frac{G}{\delta}\right)
    +d(n_0+s_\delta)\log3 
    \le C_d n_0
    \le C_{d,T}\frac{G^2}{\delta^2}. \label{eq:count-Adelta}
\end{align}
Here we use
$s_\delta\le(K_d+1)n_0$, $n_0\ge(G/\delta)^2$, and the last inequality follows from the definition of
$n_0$.

\smallskip
\noindent {\bf Step 3. Pathwise reconstruction.}
Fix $\omega\in\Gc_\delta$ and set $y:=Y(\omega)$. Denote the realized
forest simply by $\mathsf F\in\mathfrak F_\delta$. With the leaf partition
in \eqref{eq:adaptive-leaf-partition}, set
\begin{equation}
    z_j:=Q_\delta(y_{t_j}),
    \qquad 0\le j\le L.
\end{equation}
For every leaf $[t_{j-1},t_j]$, the reverse inequality in
\eqref{eq:adaptive-splitting-criterion} gives
\begin{equation}
    \sup_{t\in[t_{j-1},t_j]}|y_t-y_{t_{j-1}}|\le\delta.
    \label{leaf:oscillation}
\end{equation}
The bound $|y_0|\le G$ and the adjacency property
\eqref{quantizer:adjacency} therefore imply
\begin{equation}
    z_0\in\Lambda_{\delta,G},
    \qquad
    z_j-z_{j-1}\in\delta\{-1,0,1\}^d,
    \quad 1\le j\le L.
\end{equation}
Thus $(z_0,\dots,z_L)$ belongs to
$\mathcal Z_{\mathsf F}$, and the corresponding codeword
$
    \widehat y
    :=
    \mathsf I_{\mathsf F}[(z_0,\dots,z_L)]$
belongs to $\Ac_{\mathsf F}\subset\Ac_\delta$. For $t\in[t_{j-1},t_j]$, write
\begin{equation}
    \theta:=\frac{t-t_{j-1}}{t_j-t_{j-1}}.
\end{equation}
The leaf bound \eqref{leaf:oscillation} and the quantization-error bound in
\eqref{quantizer:adjacency} give
\begin{align}
    |y_t-\widehat y_t|
    \le
    \left|
        y_t-(1-\theta)y_{t_{j-1}}-\theta y_{t_j}
    \right|  
    +(1-\theta)|y_{t_{j-1}}-z_{j-1}|
    +\theta|y_{t_j}-z_j|  
    \le
    \left(2+\frac{\sqrt d}{2}\right)\delta.
\end{align}
Consequently,
\begin{equation}
    d_\infty(Y(\omega),\Ac_\delta)
    \le
    \|y-\widehat y\|_\infty
    \le
    \left(2+\frac{\sqrt d}{2}\right)\delta
    \qquad\text{for }\omega\in\Gc_\delta.
\end{equation}

\smallskip
\noindent{\bf Step 4. The $L^q$ distortion.}
The Burkholder--Davis--Gundy inequality implies
\begin{equation}
    \left(\E\|Y\|_\infty^{2q}\right)^{1/(2q)}
    \le
    G+GT+C_qG\sqrt T
    \le C_{q,T}G.\label{eq:adaptive-Y-moment}
\end{equation}
Since the zero path belongs to $\Ac_\delta$, H\"older's inequality and
\eqref{eq:adaptive-forest-tail} yield
\begin{align}
    \left(
        \E d_\infty(Y,\Ac_\delta)^q
    \right)^{1/q}
    \le&
    C_d\delta
    +
    \left(\E\|Y\|_\infty^{2q}\right)^{1/(2q)}
    \P(\Gc_\delta^c)^{1/(2q)}\\ 
    \le&
    C_d\delta
    +
    C_{q,T}G
    \exp\left(-\frac{G^2}{2q\delta^2}\right)
    \le C_{d,q,T}\delta.\label{eq:adaptive-distortion}
\end{align}
The final inequality follows from $G/\delta\ge1$. For $n$ larger than a threshold depending only on $d$ and $T$, take
\begin{equation}
    \delta
    =
    C_{d,T}G(\log n)^{-1/2}
\end{equation}
where $C_{d,T}$ is from \eqref{eq:count-Adelta} and $n$ is sufficiently large such that $\delta \leq G$. Substituting this choice of $\delta$ into \eqref{eq:adaptive-distortion} gives
\begin{equation}
    e_{n,q}(Y)
    \le
    C_{d,q,T}G(\log n)^{-1/2}
\end{equation}
for all such $n$. The finitely many remaining values of $n$ are absorbed into
$C_{d,q,T}$ using \eqref{eq:adaptive-Y-moment}.
\end{proof}

\subsubsection{Localization for integrable characteristics}

Theorem~\ref{lemma:adaptive-dyadic-coding} cannot be applied directly under
Assumption~\ref{assum:regularity}, because the initial value and integrands are
bounded only by the integrable random variable $\Lambda_X$.
The next proposition localizes according to dyadic levels of $\Lambda_X$ and
allocates codebooks of decreasing cardinality to increasingly rare events. The
moment condition makes these errors summable without changing the
rate.
\begin{proposition}
\label{prop:endpoint-functional-quantization}
Under Assumption~\ref{assum:regularity}, for every $q$ satisfying
$1\leq q<\rho$, there exists a constant
$C=C\bigl(d,T,q,\rho,\E\Lambda_X^\rho\bigr)>0$ such that
\[
e_{n,q}(X)
\leq
C(\log n)^{-1/2},
\qquad
n\geq2.
\]
\end{proposition}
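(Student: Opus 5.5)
We localize according to the dyadic levels of $\Lambda_X$ and apply Theorem~\ref{lemma:adaptive-dyadic-coding} on each level. For $k\ge0$ set $G_k:=2^{k}$ and
\[
E_k:=\{2^{k-1}<\Lambda_X\le 2^k\}
\]
(with $E_0:=\{\Lambda_X\le1\}=\{\Lambda_X=1\}$). On each $E_k$ we build a process $Y^{(k)}$ by radial truncation. Let $\pi_G(x):=x\min\{1,G/|x|\}$, and define
\[
Y^{(k)}_t:=\pi_{G_k}(X_0)+\int_0^t\pi_{G_k}(a_s)\,ds+\int_0^t\pi_{G_k}(H_s)\,dW_s,
\]
where the truncation of $H$ is done in Hilbert--Schmidt norm. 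The truncated integrands are still progressively measurable. $Y^{(k)}$ satisfies the hypotheses of Theorem~\ref{lemma:adaptive-dyadic-coding} with $G=G_k$. Since $E_k$ is $\Fc_T$-measurable but not adapted, we argue as follows. On $\{\Lambda_X\le G_k\}\supset E_k$ the integrands coincide Lebesgue-a.e. with the original ones, so the stochastic integrals agree a.s. on that event (by locality of the stochastic integral: the difference has zero quadratic variation there). Hence $X=Y^{(k)}$ a.s. on $E_k$.

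\textbf{Allocation of codepoints.} Given $n$, we assign to level $k$ a codebook of size $n_k$ and take $\Ac:=\bigcup_{k\le K}\Ac^{(k)}\cup\{0\}$. The sizes are $n_k:=\lfloor n^{2^{-k-2}}\rfloor$ for $k\le K$, with $K\approx\log_2\log n$ chosen so that $n_k\ge2$ ($\sum_k n_k\le n$ holds for large $n$ since the exponents sum to $1/2$, and $\sum_k n^{2^{-k-2}}\le n^{1/4}\cdot O(\log\log n)+\dots\le n$). Actually it is simpler to take $n_k:=\lfloor n^{1/2}\rfloor$ for all $k\le K$ with $K=\lceil\log_2\log n\rceil$, so that $|\Ac|\le K n^{1/2}+1\le n$ for large $n$, while $\log n_k\asymp\log n$. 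Then we bound
\begin{align}
\E d_\infty(X,\Ac)^q
&\le\sum_{k\le K}\E\bigl[d_\infty(Y^{(k)},\Ac^{(k)})^q\1_{E_k}\bigr]+\E\bigl[\|X\|_\infty^q\1_{\{\Lambda_X>2^K\}}\bigr].
\end{align}

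\textbf{The level sum.} This is the main obstacle: the bound $e_{n_k,q}(Y^{(k)})\lesssim G_k(\log n)^{-1/2}$ alone, summed over $k$, diverges like $2^K$. The factor $\P(E_k)$ must be extracted. We use H\"older with exponents $r/q$ and its conjugate, for some $q<r<\rho$:
\[
\E\bigl[d^q\1_{E_k}\bigr]\le e_{n_k,r}(Y^{(k)})^q\,\P(E_k)^{1-q/r}\le C G_k^q(\log n)^{-q/2}\bigl(2^{-(k-1)\rho}\E\Lambda_X^\rho\bigr)^{1-q/r}.
\]
Here we invoke the theorem with exponent $r$ for the codebook $\Ac^{(k)}$, which is optimal for $Y^{(k)}$. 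The resulting terms are summable in $k$ provided $q<\rho(1-q/r)$, i.e.\ $r>q\rho/(\rho-q)$. Such an $r$ is available only if $q\rho/(\rho-q)<\rho$, i.e.\ $q<\rho/2$. To cover the full range $q<\rho$, we will instead use the explicit event structure $\Gc_\delta$ of Theorem~\ref{lemma:adaptive-dyadic-coding}. On $E_k\cap\Gc_{\delta_k}$ the error is at most $C\delta_k\le CG_k(\log n)^{-1/2}$, whose $q$-th power times $\P(E_k)$ sums to at most $C(\log n)^{-q/2}\E\Lambda_X^q$. On $E_k\cap\Gc_{\delta_k}^c$ we use the zero path together with H\"older against the super-exponentially small probability $\exp(-c\log n)$, which gives a negligible term times moments of $\|Y^{(k)}\|_\infty$. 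Choosing $\delta_k=cG_k(\log n)^{-1/2}$ gives $\P(\Gc_{\delta_k}^c)\le n^{-c'}$, so this contribution is at most $\sum_{k\le K}G_k^q n^{-c''}\le (\log n)^{q}n^{-c''}$, which is negligible.

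\textbf{The tail term.} By Assumption~\ref{assum:regularity} and the BDG inequality, applied conditionally via the truncation above, we get $\E\|X\|_\infty^\rho\le C\E\Lambda_X^\rho$. Hence
\[
\E\bigl[\|X\|_\infty^q\1_{\{\Lambda_X>2^K\}}\bigr]\le C\,2^{-K(\rho-q)}\le C(\log n)^{-(\rho-q)}\cdot\text{const}.
\]
This is only $o(1)$, not $(\log n)^{-q/2}$, when $\rho-q<q/2$. We therefore take $K$ larger, namely $2^K\asymp(\log n)^{A}$ with $A(\rho-q)\ge q/2$. This only increases the number of levels polylogarithmically, so $|\Ac|\le K n^{1/2}+1\le n$ still holds for large $n$. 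The moderate-$n$ range is absorbed into the constant using $e_{n,q}(X)\le(\E\|X\|_\infty^q)^{1/q}$.
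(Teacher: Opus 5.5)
Your final argument is correct, but it controls the level sum differently from the paper, and your reason for abandoning the H\"older route is mistaken. Nothing forces $r<\rho$. Each $Y^{(k)}$ has characteristics bounded by $G_k$, so Theorem~\ref{lemma:adaptive-dyadic-coding} applies with any exponent $r\ge1$, at the price of a constant $C_{d,r,T}$. Choosing $r>q\rho/(\rho-q)$ then makes $\sum_k G_k^q\,\P(E_k)^{1-q/r}$ converge for every $q<\rho$. This is exactly the paper's proof. It applies the theorem with exponent $\theta q$, where $\theta>\rho/(\rho-q)$, and uses H\"older with exponents $\theta$ and $\theta/(\theta-1)$, so that $\sum_j R_j^q\,\P(B_j)^{1-1/\theta}<\infty$.

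Your replacement route through $\Gc_{\delta}$ is nonetheless valid, and in the level sum it is somewhat sharper.
\begin{itemize}
\item You use the pointwise bound $d_\infty(Y^{(k)},\Ac_{\delta_k})\le C\delta_k$ on $\Gc_{\delta_k}$, which gives $\sum_k G_k^q\,\P(E_k)\le 2^q\,\E\Lambda_X^q$ with no H\"older loss.
\item The exceptional events cost only $\sum_{k\le K}C\,G_k^q\,n^{-c}$. This is negligible because $2^K$ is polylogarithmic in $n$.
\item The $\rho$-moment enters only through the tail $\{\Lambda_X>2^K\}$.
\end{itemize}
The paper instead uses the theorem as a black box, through its $e_{n,r}$ conclusion alone. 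That is more modular: any bound of the form $C_r G(\log n)^{-1/2}$ for bounded-characteristic processes would suffice. The cost is that the moment gap is already needed inside the level sum.

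Your allocation also differs from the paper's. You use equal sizes $\lfloor n^{1/2}\rfloor$ over $O(\log\log n)$ levels, with cutoff $2^K\asymp(\log n)^A$ and $A(\rho-q)\ge q/2$. The paper uses geometrically decreasing sizes $n_j=\lfloor\frac{1-e^{-1}}{4}ne^{-j}\rfloor$ over $\lfloor\frac12\log n\rfloor$ levels, so its cutoff is polynomial in $n$. Both choices keep $|\Ac|\le n$ and $\log n_k\asymp\log n$.

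Two points should be made explicit in your write-up.
\begin{itemize}
\item After you switch routes, the codebook at level $k$ must be $\Ac_{\delta_k}$. The constant in $\delta_k=cG_k(\log n)^{-1/2}$ must be large enough, for example $c^2\ge 3C_{d,T}$, that $\log|\Ac_{\delta_k}|\le C_{d,T}\log n/c^2\le\log\lfloor n^{1/2}\rfloor$ for large $n$.
\item The bound $\P(\Gc_{\delta_k}^c)\le n^{-c_{d,T}/c^2}$ is polynomially small, not super-exponentially small. This still suffices, since the accumulated factor $\sum_{k\le K}G_k^q\lesssim(\log n)^{Aq}$ is only polylogarithmic.
\end{itemize}
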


\begin{proof}
Fix $q\in[1,\rho)$. Choose $\theta>1$ sufficiently large that, with
$\beta:=1-\frac1\theta$,
one has $\rho\beta>q$.
Such a choice is possible because $q<\rho$. For instance, it is enough
to take
$\theta>\frac{\rho}{\rho-q}$.

\medskip
\noindent\textbf{Step 1. Supremum moment estimate.}
We first record the moment estimate that will also be used to control the
contribution from large values of $\Lambda_X$. The It\^o representation of
$X$ gives
\[
\|X\|_\infty
\leq
|X_0|
+
\int_0^T|a_s|\,ds
+
\sup_{0\leq t\leq T}\left|\int_0^tH_s\,dW_s\right|.
\]
Since one has that
\[
|X_0|\leq\Lambda_X
\qquad\text{and}\qquad
\int_0^T|a_s|\,ds\leq T\Lambda_X,
\]
the Burkholder--Davis--Gundy inequality yields
\begin{equation}
\mathbb E\|X\|_\infty^\rho
\leq
C_\rho\mathbb E|X_0|^\rho
+
C_\rho\mathbb E
\left(\int_0^T|a_s|\,ds\right)^\rho
+
C_\rho\mathbb E
\left(\int_0^T\|H_s\|^2\,ds\right)^{\rho/2} 
\leq
C_{\rho,T}
\mathbb E\Lambda_X^\rho < \infty.\label{eq:endpoint:supremum-moment}
\end{equation}

\medskip
\noindent\textbf{Step 2. Truncation.}
It remains to prove the quantization estimate. It is enough initially to
consider sufficiently large $n$. Set
\[
R_j:=2^{j+1},
\quad j\geq0,
\]
and decompose the probability space into the disjoint events
\[
B_0:=\{\Lambda_X\leq2\},
\qquad
B_j:=\{2^j<\Lambda_X\leq2^{j+1}\},
\quad j\geq1.
\]
Since $\Lambda_X$ is finite almost surely, these events form a partition
of $\Omega$ up to a null set. For $R>0$, let $\pi_R$ denote radial projection onto the closed ball of
radius $R$. We use the same notation for vectors and matrices, with the
Euclidean norm for vectors and the Hilbert--Schmidt norm for matrices.
Thus,
\[
\pi_R(z)
=
\begin{cases}
z, & \text{if }\lvert z\rvert\leq R,\\[1mm]
Rz/\lvert z\rvert, & \text{if }\lvert z\rvert>R,
\end{cases}
\]
with the corresponding definition for matrices. For every $j\geq0$, define the continuous It\^o process
\[
Y_t^{(j)}
=
\pi_{R_j}(X_0)
+
\int_0^t\pi_{R_j}(a_s)\,ds
+
\int_0^t\pi_{R_j}(H_s)\,dW_s,
\qquad
0\leq t\leq T.
\]
The initial condition and the two It\^o characteristics of $Y^{(j)}$
satisfy
\[
|Y_0^{(j)}|\leq R_j,
\qquad
|\pi_{R_j}(a_s)|\leq R_j,
\qquad
\|\pi_{R_j}(H_s)\|\leq R_j
\]
almost surely, with the last two inequalities holding for almost every
$s\in[0,T]$. Consequently, $Y^{(j)}$ satisfies the assumptions of
Theorem~\ref{lemma:adaptive-dyadic-coding} with $G=R_j$. We next verify that
$Y^{(j)}$ agrees with $X$ on the event $B_j$.
On $B_j$, we have
\[
|X_0|\leq R_j,
\qquad
|a_s|\leq R_j,
\qquad
\|H_s\|\leq R_j
\]
for almost every $s\in[0,T]$.
The finite-variation terms in the representations
of $X$ and $Y^{(j)}$ are thus equal on $B_j$. Moreover, the quadratic
variation of the difference of their martingale terms is zero on $B_j$.
It follows that
\[
X=Y^{(j)}
\qquad\text{on }B_j
\]
as elements of $\mathcal C^d$, up to a null set.

\medskip
\noindent\textbf{Step 3. Quantization Error on $B_0,\ldots,B_J$.}
We now choose the codebook sizes $n_j$ for the truncated processes $Y^{(j)}$.
Let
\[
J:=\left\lfloor\frac{\log n}{2}\right\rfloor,
\qquad
n_j:=\left\lfloor \frac{1-e^{-1}}4ne^{-j}\right\rfloor,
\quad 0\leq j\leq J.
\]
For every $0\leq j\leq J$, one has
\[
ne^{-j}
\geq
ne^{-J}
\geq
n^{1/2}.
\]
It follows that, for all sufficiently large $n$,
\begin{equation}
n_j\geq2
\qquad\text{and}\qquad
\log n_j\geq\frac{\log n}{3},\label{eq:endpoint:nj-lower-bound}
\end{equation}
uniformly over $0\leq j\leq J$. The total number of allocated codepoints satisfies
\begin{equation}
1+\sum_{j=0}^J n_j
\leq
1+\frac{1-e^{-1}}4n\sum_{j=0}^\infty e^{-j}
=
1+\frac n4
\leq n,\label{eq:endpoint:codepoint-budget}
\end{equation}
for all $n\geq2$. Apply Theorem~\ref{lemma:adaptive-dyadic-coding} to $Y^{(j)}$, with
moment exponent $\theta q$ and codebook size $n_j$. Since the bounding
constant for the initial condition, drift, and diffusion coefficient is
$R_j$, there exists a deterministic codebook
$A_j\subset\mathcal C^d$ with $|A_j|\leq n_j$ such that
\begin{equation}
\left(
\mathbb E
d_\infty\bigl(Y^{(j)},A_j\bigr)^{\theta q}
\right)^{1/(\theta q)}
\leq
C_{d,\theta q,T}
R_j(\log n_j)^{-1/2}.\label{eq:endpoint:shell-quantization}
\end{equation}
Combining \eqref{eq:endpoint:shell-quantization} with
\eqref{eq:endpoint:nj-lower-bound} gives
\[
\left(
\mathbb E
d_\infty\bigl(Y^{(j)},A_j\bigr)^{\theta q}
\right)^{1/(\theta q)}
\leq
C R_j(\log n)^{-1/2},
\qquad
0\leq j\leq J,
\]
where, throughout the remainder of the proof, $C$ denotes a finite
positive constant independent of $n$ and $j$. Define the union codebook
\[
A:=\{0\}\cup\bigcup_{j=0}^J A_j.
\]
By \eqref{eq:endpoint:codepoint-budget},
$|A|\leq n$. We first estimate the distortion on
$\bigcup_{j=0}^J B_j$. Since
$A_j\subset A$ and $X=Y^{(j)}$ on $B_j$, one has
\[
\begin{aligned}
\mathbb E\left[
d_\infty(X,A)^q\mathbf 1_{B_j}
\right]
&\leq
\mathbb E\left[
d_\infty\bigl(Y^{(j)},A_j\bigr)^q
\mathbf 1_{B_j}
\right].
\end{aligned}
\]
Applying H\"older's inequality with conjugate exponents
$\theta$ and $\theta/(\theta-1)$ gives
\[
\begin{aligned}
\mathbb E\left[
d_\infty(X,A)^q\mathbf 1_{B_j}
\right]
&\leq
\left(
\mathbb E
d_\infty\bigl(Y^{(j)},A_j\bigr)^{\theta q}
\right)^{1/\theta}
\mathbb P(B_j)^\beta \\
&=
\left[
\left(
\mathbb E
d_\infty\bigl(Y^{(j)},A_j\bigr)^{\theta q}
\right)^{1/(\theta q)}
\right]^q
\mathbb P(B_j)^\beta 
\leq
C R_j^q(\log n)^{-q/2}\mathbb P(B_j)^\beta.
\end{aligned}
\]
For $j\geq1$, Markov's inequality and the moment assumption on
$\Lambda_X$ yield
\[
\mathbb P(B_j)
\leq
\mathbb P(\Lambda_X>2^j)
\leq
2^{-\rho j}\mathbb E\Lambda_X^\rho.
\]
Since $R_j=2^{j+1}$, it follows that
\[
\begin{aligned}
\sum_{j=0}^J R_j^q\mathbb P(B_j)^\beta
&\leq
2^q
+
C\sum_{j=1}^J
2^{q(j+1)}2^{-\rho\beta j} 
\leq
C
+
C\sum_{j=1}^\infty
2^{j(q-\rho\beta)}.
\end{aligned}
\]
The final series converges because $\rho\beta>q$. We have consequently
obtained
\begin{equation}
\sum_{j=0}^J
\mathbb E\left[
d_\infty(X,A)^q\mathbf 1_{B_j}
\right]
\leq
C(\log n)^{-q/2}.\label{eq:endpoint:encoded-shell-total}
\end{equation}

\medskip
\noindent\textbf{Step 4. Quantization Error on $\{\Lambda_X>2^{J+1}\}$.}
It remains to estimate the distortion on $\{\Lambda_X>2^{J+1}\}$. Since the
zero path belongs to $A$,
\[
d_\infty(X,A)\leq\|X\|_\infty.
\]
Up to a null set, the identity
\[
\Omega\setminus\bigcup_{j=0}^J B_j
=
\{\Lambda_X>2^{J+1}\}
\]
holds. H\"older's inequality with conjugate exponents
$\rho/q$ and $\rho/(\rho-q)$ therefore gives
\begin{equation}
\mathbb E\left[
d_\infty(X,A)^q
\mathbf 1_{\{\Lambda_X>2^{J+1}\}}
\right]
\leq
\mathbb E\left[
\|X\|_\infty^q
\mathbf 1_{\{\Lambda_X>2^{J+1}\}}
\right]
\leq
\left(\mathbb E\|X\|_\infty^\rho\right)^{q/\rho}
\mathbb P(\Lambda_X>2^{J+1})^{1-q/\rho}.\label{eq:endpoint:tail-holder}
\end{equation}
Another application of Markov's inequality gives
\begin{equation}
\mathbb P(\Lambda_X>2^{J+1})
\leq
2^{-\rho(J+1)}
\mathbb E\Lambda_X^\rho.\label{eq:endpoint:tail-probability}
\end{equation}
Substituting \eqref{eq:endpoint:tail-probability} and
\eqref{eq:endpoint:supremum-moment} into \eqref{eq:endpoint:tail-holder} yields
\begin{equation}
\mathbb E\left[
d_\infty(X,A)^q
\mathbf 1_{\{\Lambda_X>2^{J+1}\}}
\right]
\leq
C2^{-(\rho-q)(J+1)}.\label{eq:endpoint:tail-shell-error}
\end{equation}
Because $J=\lfloor(\log n)/2\rfloor$, the right-hand side is
exponentially small in $\log n$. In particular, for all sufficiently
large $n$,
\begin{equation}
2^{-(\rho-q)(J+1)}
\leq
Ce^{-c\log n}
\leq
C(\log n)^{-q/2}.\label{eq:endpoint:tail-rate}
\end{equation}
Combining \eqref{eq:endpoint:encoded-shell-total},
\eqref{eq:endpoint:tail-shell-error}, and \eqref{eq:endpoint:tail-rate} yields
\[
\mathbb E d_\infty(X,A)^q
\leq
C(\log n)^{-q/2}.
\]
Since $|A|\leq n$, the definition of the functional quantization error
gives
\[
e_{n,q}(X)
\leq
\left(
\mathbb E d_\infty(X,A)^q
\right)^{1/q}
\leq
C(\log n)^{-1/2}
\]
for every sufficiently large $n$. For the finitely many remaining values of $n$, use the codebook consisting
only of the zero path. Lyapunov's inequality and
\eqref{eq:endpoint:supremum-moment} give
\[
e_{n,q}(X)
\leq
\left(\mathbb E\|X\|_\infty^q\right)^{1/q}
<\infty.
\]
Increasing the constant to cover these finitely many values completes the
proof.
\end{proof}

\subsubsection{From functional quantization to empirical laws}

Functional quantization alone does not control the empirical measure, because
a deterministic quantizer and an equal-weight empirical sample have different
support and weight structures. The following proposition supplies the missing
transfer. Truncation confines the problem to a bounded ball, projection onto a
finite codebook reduces the central part to multinomial sampling, and a final
balance between bounded fluctuations and tail errors yields the deviation
bound.
\begin{proposition}
\label{prop:quantization-to-empirical}
Let $p\ge1$, and let $Z$ be a $\Cc^d$-valued random variable with law
$\eta$. Assume that, for some $\rho>p$ and $A>0$,
\begin{equation}
    \E\|Z\|_\infty^\rho<\infty,
    \qquad
    e_{n,p}(Z)
    \le
    A(\log n)^{-1/2},
    \qquad n\ge2.
\end{equation}
Let $Z_1,\dots,Z_N$ be independent copies of $Z$, and set
\begin{equation}
    \eta_N:=\frac1N\sum_{i=1}^N\delta_{Z_i}.
\end{equation}
Set
\begin{equation}
    \beta_{\rho,p}
    :=
    \min\left\{\frac{\rho}{p}-1,\frac{\rho}{2p}\right\}.
\end{equation}
There exist constants $C_{p,\rho,A,\eta}>0$ and $c_p>0$, where
$C_{p,\rho,A,\eta}$ depends on $\eta$ only through
$\E\|Z\|_\infty^\rho$, such that, for every $N\ge2$ and $t>0$,
\begin{equation}
    \P\left(
        \Wc_p(\eta_N,\eta)>C_{p,\rho,A,\eta}(\log N)^{-1/2}+t
    \right)
    \le
    \exp\left(
        -\frac{c_pNt^{2p}}{(\log N)^{p^2/(\rho-p)}}
    \right)
    +C_{p,\rho,A,\eta}N^{-\beta_{\rho,p}}t^{-\rho}.
    \label{eq:quantization-concentration}
\end{equation}
\end{proposition}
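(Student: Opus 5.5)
I would split $\Wc_p(\eta_N,\eta)$ into a truncated part and a tail part, bound the mean of the truncated part by quantization plus multinomial fluctuation, get its deviations from a bounded-difference inequality, and control the tail part by Markov's inequality using the $\rho$-moment. Fix a radius $R=R_N\ge1$ and let $\tilde Z:=\pi_R(Z)$ be the radial projection of the path onto the ball $\{\|x\|_\infty\le R\}$, which is $1$-Lipschitz. Put $\tilde\eta:=\Lc(\tilde Z)$ and $\tilde\eta_N:=\frac1N\sum_i\delta_{\pi_R(Z_i)}$. Coupling with the same index gives
\begin{equation}
\Wc_p(\eta_N,\eta)\le \Wc_p(\tilde\eta_N,\tilde\eta)+\Big(\frac1N\sum_{i=1}^N\|Z_i\|_\infty^p\1_{\{\|Z_i\|_\infty>R\}}\Big)^{1/p}+\big(\E\|Z\|_\infty^p\1_{\{\|Z\|_\infty>R\}}\big)^{1/p}.
\end{equation}
The last term is deterministic and at most $CR^{-(\rho-p)/p}$. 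For the middle term, Markov's inequality at level $t^p$ gives probability at most $t^{-p}\E\|Z\|^p\1_{\{\|Z\|>R\}}\le CR^{p-\rho}t^{-p}$. I would rather use a $\rho/p$-moment (Rosenthal) version of this bound to produce the $t^{-\rho}$ form. Its centered fluctuation is of order $N^{-1/2}$ in $L^{\rho/p}$ when $\rho\ge 2p$, and of order $N^{1-\rho/p}$ via the von Bahr--Esseen inequality when $\rho<2p$. This is exactly where $\beta_{\rho,p}=\min\{\rho/p-1,\rho/(2p)\}$ comes from.

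For the mean of the truncated term, take a codebook $\Ac$ with $|\Ac|\le n$ nearly attaining $e_{n,p}(Z)$. Since $\pi_R$ is $1$-Lipschitz, $\pi_R(\Ac)$ is a codebook for $\tilde Z$ with the same error. Let $\Phi$ be a nearest-point map onto $\pi_R(\Ac)$. Then
\begin{equation}
\Wc_p(\tilde\eta_N,\tilde\eta)\le \Wc_p(\tilde\eta_N,\Phi_\#\tilde\eta_N)+\Wc_p(\Phi_\#\tilde\eta_N,\Phi_\#\tilde\eta)+\Wc_p(\Phi_\#\tilde\eta,\tilde\eta).
\end{equation}
In expectation the first and third terms are at most $e_{n,p}(Z)\le A(\log n)^{-1/2}$. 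For the first term use Jensen: $\E(\frac1N\sum d^p)^{1/p}\le(\E d^p)^{1/p}$. The middle term compares two measures on at most $n$ points of diameter $2R$, so
\begin{equation}
\Wc_p^p\le (2R)^p\sum_{a}|\hat w_a-w_a|,
\end{equation}
where $\hat w_a$ and $w_a$ are the empirical and true cell masses. The multinomial bound then gives expectation at most $(2R)^p\sqrt{n/N}$. Choosing $n=\lfloor\sqrt N\rfloor$ and $R$ polynomial in $\log N$ makes this term $R N^{-1/(4p)}\ll(\log N)^{-1/2}$. Since $\log n\asymp\log N$, we get $\E\Wc_p(\tilde\eta_N,\tilde\eta)\le C(\log N)^{-1/2}$.

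For deviations, note that $\Wc_p(\tilde\eta_N,\tilde\eta)$ is a function of $N$ independent points in a ball of diameter $2R$. Changing one point changes $\Wc_p^p$ by at most $(2R)^p/N$. Rather than working with $\Wc_p$ directly, I would apply McDiarmid's inequality to $\Wc_p^p$, or equivalently use that $\Wc_p$ is $N^{-1/p}$-Lipschitz with respect to the $\ell^p$ sample metric. This gives
\begin{equation}
\P\big(\Wc_p(\tilde\eta_N,\tilde\eta)>\E\Wc_p(\tilde\eta_N,\tilde\eta)+t\big)\le\exp\big(-cNt^{2p}R^{-2p}\big),
\end{equation}
using $(a+t)^p\ge a^p+t^p$.

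Finally I choose $R$ so that the deterministic tail $R^{-(\rho-p)/p}$ is at most $(\log N)^{-1/2}$, namely $R=(\log N)^{p/(2(\rho-p))}$. Then $R^{2p}=(\log N)^{p^2/(\rho-p)}$, which matches the exponent in the statement. The main obstacle is keeping the random tail term with exactly the advertised $N^{-\beta_{\rho,p}}t^{-\rho}$ dependence. Note $t^{-\rho}$ rather than $t^{-p}$: a Markov bound at moment $\rho/p$ on the centered sum is needed. With $R$ as above, the bias $R^{p-\rho}$ is already absorbed into the mean term, so only the centered fluctuation remains. Checking that this yields $N^{-\beta}t^{-\rho}$ uniformly in $t$ is the step I expect to be hardest. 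If a constant mismatch arises, I would split into the cases $t\ge1$ and $t<1$ and enlarge $C$.
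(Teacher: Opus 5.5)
Your proposal is correct and follows essentially the paper's proof. The shared steps are truncation at $R=(\log N)^{p/(2(\rho-p))}$, and von Bahr--Esseen/Rosenthal plus Markov at level $(t/2)^p$ for the tail, which gives $N^{-\beta_{\rho,p}}t^{-\rho}$ directly with no case split in $t$. They also include projection onto a $\lfloor N^{1/2}\rfloor$-point codebook with the multinomial bound $R^pN^{-1/4}$, and McDiarmid applied to $\Wc_p^p$ with differences $(2R)^p/N$.

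A few small corrections:
\begin{itemize}
\item Because McDiarmid is applied to $\Wc_p^p$, center the deviation at $(\E\Wc_p^p(\tilde\eta_N,\tilde\eta))^{1/p}$ rather than $\E\Wc_p(\tilde\eta_N,\tilde\eta)$, since Jensen runs the wrong way. Your piecewise estimates bound $(\E\Wc_p^p)^{1/p}$ equally well via Minkowski.
\item Radial truncation in the sup norm is only $2$-Lipschitz, not $1$-Lipschitz; this only changes constants.
\item Bounded differences applied to $\Wc_p$ itself is not equivalent to your $\Wc_p^p$ route for $p>1$. It yields $\exp(-cN^{2/p-1}t^2/R^2)$ instead.
\end{itemize}
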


\begin{proof}
Except for constants displayed explicitly, the subscripts on constants
denoted by $C$ or $c$ list all parameters on which they may depend. Their
values may change from line to line, and they are independent of $N$ and
$t$.

\smallskip

\noindent {\bf Step 1. Truncation.}
Fix $N\ge16$ and $t>0$, and set
\begin{equation}
    R:=(\log N)^{p/[2(\rho-p)]}.
\end{equation}
Let $T_R$ be the radial truncation
\begin{equation}
    T_R(z)
    :=
    \begin{cases}
        z, & \|z\|_\infty\le R,\\[3pt]
        \dfrac{R}{\|z\|_\infty}z, & \|z\|_\infty>R.
    \end{cases}
\end{equation}
A direct comparison of the cases according to whether the two norms exceed
$R$ gives
\begin{equation}
    \|T_R(x)-T_R(y)\|_\infty
    \le
    \|x-y\|_\infty
    +\big|\|x\|_\infty-\|y\|_\infty\big|
    \le
    2\|x-y\|_\infty.\label{eq:conc:truncation-lipschitz}
\end{equation}
Set
\begin{equation}
    \eta^R:=(T_R)_\#\eta,
    \qquad
    \eta_N^R:=(T_R)_\#\eta_N.
\end{equation}
Let
\begin{equation}
    \xi_i:=\|Z_i-T_R(Z_i)\|_\infty^p
    =\big(\|Z_i\|_\infty-R\big)_+^p.
\end{equation}
The moment assumption gives
\begin{equation}
    \E\xi_1
    \le
    R^{-(\rho-p)}\E\|Z\|_\infty^\rho,
    \qquad
    \E\xi_1^{\rho/p}
    \le
    \E\|Z\|_\infty^\rho.\label{eq:conc:xi-moments}
\end{equation}
The canonical couplings yield
\begin{equation}
    \Wc_p^p(\eta,\eta^R)
    \le
    \E\xi_1,
    \qquad
    \Wc_p^p(\eta_N,\eta_N^R)
    \le
    \frac1N\sum_{i=1}^N\xi_i. \label{eq:conc:1}
\end{equation}
Since
\begin{equation}
    \E|\xi_1-\E\xi_1|^{\rho/p}
    \le
    C_{p,\rho}\E\|Z\|_\infty^\rho,
\end{equation}
combining two classical results, one from von Bahr and Esseen
\cite[Theorem~2, p.~301]{vonBahrEsseen1965} (for $p<\rho\le2p$) and another from Rosenthal
\cite[Theorem~3, p.~279]{Rosenthal1970} (for
$\rho>2p$), give
\begin{align}
    \E\left|
        \frac1N\sum_{i=1}^N(\xi_i-\E\xi_1)
    \right|^{\rho/p}
    &\le
    C_{p,\rho}\E\|Z\|_\infty^\rho
    \begin{cases}
        N^{1-\rho/p}, & p<\rho\le2p,\\[3pt]
        N^{1-\rho/p}+N^{-\rho/(2p)}, & \rho>2p,
    \end{cases}
    \\
    &\le
    C_{p,\rho}\E\|Z\|_\infty^\rho N^{-\beta_{\rho,p}}.
\end{align}
The implication
\begin{equation}
    \left\{
        \Wc_p(\eta_N,\eta_N^R)>(\E\xi_1)^{1/p}+\frac{t}{2}
    \right\}
    \subset
    \left\{
        \frac1N\sum_{i=1}^N(\xi_i-\E\xi_1)>\left(\frac{t}{2}\right)^p
    \right\}
\end{equation}
follows from $(a+b)^p\ge a^p+b^p$ for $a,b\ge0$. Markov's inequality
therefore yields
\begin{equation}
    \P\left(
        \Wc_p(\eta_N,\eta_N^R)>(\E\xi_1)^{1/p}+\frac{t}{2}
    \right)
    \le
    C_{p,\rho,\eta}N^{-\beta_{\rho,p}}t^{-\rho}. \label{eq:conc:2}
\end{equation}

\smallskip
\noindent {\bf Step 2. Projected quantization.}
Choose a deterministic codebook for $Z$ with at most
$\lfloor N^{1/2}\rfloor$ points and $p$-distortion at most
$2A\big(\log\lfloor N^{1/2}\rfloor\big)^{-1/2}$.
Projecting its points under $T_R$ produces a codebook $\Ac$ with at most
$\lfloor N^{1/2}\rfloor$ points and diameter at most $2R$.
By \eqref{eq:conc:truncation-lipschitz},
\begin{equation}
    \left(
        \E d_\infty(T_R(Z),\Ac)^p
    \right)^{1/p}
    \le
    4A\big(\log\lfloor N^{1/2}\rfloor\big)^{-1/2}
    \le
    C_pA(\log N)^{-1/2}.
\end{equation}
Let $Q:\Cc^d\to\Ac$ be a measurable nearest-point projection, with ties
resolved by a fixed deterministic rule, and set
\begin{equation}
    \bar\eta:=Q_\#\eta^R,
    \qquad
    \bar\eta_N:=Q_\#\eta_N^R.
\end{equation}
The population and samplewise nearest-point couplings give
\begin{equation}
    \Wc_p^p(\eta^R,\bar\eta)
    \le
    \E d_\infty(T_R(Z),\Ac)^p
    \le
    C_pA^p(\log N)^{-p/2},\label{eq:conc:projected-population}
\end{equation}
and
\begin{equation}
    \E\Wc_p^p(\eta_N^R,\bar\eta_N)
    \le
    \frac1N\sum_{i=1}^N
    \E d_\infty(T_R(Z_i),\Ac)^p
    \le
    C_pA^p(\log N)^{-p/2}.\label{eq:conc:projected-sample}
\end{equation}

\smallskip
\noindent {\bf Step 3. Bound between truncated laws.}
For $a\in\Ac$, the definition gives $$\bar\eta_N(\{a\})=N^{-1}\sum_{i=1}^N\mathbf 1_{\{Q(T_R(Z_i))=a\}}.$$ Clearly, $\mathbf 1_{\{Q(T_R(Z_i))=a\}}$ are independent Bernoulli random variables with success probability $\bar\eta(\{a\})$. Hence
\begin{align}
    \E\bar\eta_N(\{a\})
    =\bar\eta(\{a\}),  
    \qquad
    \operatorname{Var}\bigl(\bar\eta_N(\{a\})\bigr)
    =\frac{\bar\eta(\{a\})\bigl(1-\bar\eta(\{a\})\bigr)}{N}.
\end{align}
Since $\operatorname{diam}(\Ac)\le2R$, by the usual Wasserstein-TV inequality,
\begin{align}
    \E\Wc_p^p(\bar\eta_N,\bar\eta)
    &\le
    \frac{(2R)^p}{2}
    \sum_{a\in\Ac}
    \E|\bar\eta_N(\{a\})-\bar\eta(\{a\})|  \\
    &\le
    \frac{(2R)^p}{2\sqrt N}
    \sum_{a\in\Ac}
    \sqrt{\bar\eta(\{a\})(1-\bar\eta(\{a\}))}  \\
    &\le
    \frac{(2R)^p}{2\sqrt N}|\Ac|^{1/2}
    \left(
        \sum_{a\in\Ac}
        \bar\eta(\{a\})(1-\bar\eta(\{a\}))
    \right)^{1/2}  \\
    &\le
    \frac{(2R)^p}{2\sqrt N}N^{1/4}  \\
    &\le
    C_pR^pN^{-1/4}.\label{eq:conc:finite-codebook}
\end{align}
The triangle inequality, together with \eqref{eq:conc:projected-population},
\eqref{eq:conc:projected-sample}, and \eqref{eq:conc:finite-codebook}, yields
\begin{align}
    \E\Wc_p^p(\eta_N^R,\eta^R)
    &\le
    3^{p-1}\left[
        \E\Wc_p^p(\eta_N^R,\bar\eta_N)
        +\E\Wc_p^p(\bar\eta_N,\bar\eta)
        +\Wc_p^p(\bar\eta,\eta^R)
    \right]  \\
    &\le
    C_p\left[
        A^p(\log N)^{-p/2}
        +R^pN^{-1/4}
    \right].\label{eq:conc:truncated-mean}
\end{align}
For $\mathbf z=(z_1,\dots,z_N)\in(\Cc^d)^N$ with
$\max_i\|z_i\|_\infty\le R$, set
\begin{equation}
    \Phi(\mathbf z)
    :=
    \Wc_p^p\left(
        \frac1N\sum_{i=1}^N\delta_{z_i},
        \eta^R
    \right).
\end{equation}
Suppose that $\mathbf z$ and $\mathbf z'$ are two such vectors that differ
only in coordinate $j$. Let $(X,Y)$ be an optimal coupling of
$N^{-1}\sum_{i=1}^N\delta_{z_i}$ and $\eta^R$. For each distinct point $x$ among
$z_1,\dots,z_N$, write $I_x:=\{i:z_i=x\}$. Attach to $(X,Y)$ a random
label $I$ by setting
\begin{equation}
    \P(I=i\mid X,Y)
    =
    \frac{\mathbf 1_{\{z_i=X\}}}{|I_X|},
    \qquad i=1,\dots,N.
\end{equation}
In other words, given $X=x$, the label $I$ is chosen
uniformly from $I_x$. Therefore
\begin{equation}
    \P(I=i)
    =
    \E\bigl[\P(I=i\mid X,Y)\bigr]
    =
    \frac{\P(X=z_i)}{|I_{z_i}|}
    =
    \frac{|I_{z_i}|/N}{|I_{z_i}|}
    =
    \frac1N.
\end{equation}
Thus $I$ is uniform on $\{1,\dots,N\}$, and $X=z_I$, a.s.. The optimality of coupling $(X,Y)$ gives
\begin{equation}
    \Phi(\mathbf z)
    =
    \E d_\infty(X,Y)^p
    =
    \E d_\infty(z_I,Y)^p.
\end{equation}
Now set $X':=z_I'$. Since $I$ is uniform, $X'$ has law
$N^{-1}\sum_{i=1}^N\delta_{z_i'}$, while $Y$ still has law $\eta^R$. Hence $(X',Y)$ is a
coupling of these two measures. Moreover, $X'=X$ on $\{I\ne j\}$, whereas
$X=z_j$ and $X'=z_j'$ on $\{I=j\}$. Therefore
\begin{align}
    \Phi(\mathbf z')
    &\le
    \E d_\infty(X',Y)^p
    =
    \E d_\infty(z_I',Y)^p  \\
    &=
    \Phi(\mathbf z)
    +\E\left[
        \mathbf 1_{\{I=j\}}
        \left(d_\infty(z_j',Y)^p-d_\infty(z_j,Y)^p\right)
    \right]  \\
    &=
    \Phi(\mathbf z)
    +\frac1N\E\left[
        d_\infty(z_j',Y)^p-d_\infty(z_j,Y)^p
        \,\middle|\, I=j
    \right]  \\
    &\le
    \Phi(\mathbf z)+\frac{(2R)^p}{N}.
\end{align}
Interchanging $\mathbf z$ and $\mathbf z'$ gives
\begin{equation}
    |\Phi(\mathbf z)-\Phi(\mathbf z')|
    \le
    \frac{(2R)^p}{N}.
\end{equation}
The bounded-difference inequality of Boucheron, Lugosi, and Massart
\cite[Theorem~6.2]{BoucheronLugosiMassart2013}, with coordinate bound
$(2R)^p/N$ and deviation $(t/2)^p$, implies
\begin{align}
    &\P\left(
        \Wc_p(\eta_N^R,\eta^R)
        >\left(\E\Wc_p^p(\eta_N^R,\eta^R)\right)^{1/p}+\frac{t}{2}
    \right)  \\
    &\qquad\le
    \P\left(
        \Wc_p^p(\eta_N^R,\eta^R)
        -\E\Wc_p^p(\eta_N^R,\eta^R)>\left(\frac{t}{2}\right)^p
    \right)  
    \le
    \exp\left(
        -\frac{c_pNt^{2p}}{R^{2p}}
    \right).\label{eq:conc:3}
\end{align}

\smallskip
\noindent {\bf Step 4. Balance the parameters and complete the proof.}
Taking $p$th roots in \eqref{eq:conc:truncated-mean} and using
\eqref{eq:conc:xi-moments} gives
\begin{equation}
    \left(\E\Wc_p^p(\eta_N^R,\eta^R)\right)^{1/p}
    +2(\E\xi_1)^{1/p}
    \le
    C_{p,\rho,A,\eta}\left[
        (\log N)^{-1/2}
        +RN^{-1/(4p)}
        +R^{-(\rho-p)/p}
    \right].
\end{equation}
Here $R^{-(\rho-p)/p}=(\log N)^{-1/2}$, while
\begin{equation}
    (\log N)^{1/2}RN^{-1/(4p)}
    =
    (\log N)^{1/2+p/[2(\rho-p)]}N^{-1/(4p)}
    \le
    C_{p,\rho}
\end{equation}
for $N\ge16$. Noticing \eqref{eq:conc:1}, outside the union of the events in \eqref{eq:conc:2} and \eqref{eq:conc:3}, one has
\begin{align}
    \Wc_p(\eta_N,\eta)
    &\le
    \Wc_p(\eta_N,\eta_N^R)
    +\Wc_p(\eta_N^R,\eta^R)
    +\Wc_p(\eta^R,\eta)  \\
    &\le
    \left(\E\Wc_p^p(\eta_N^R,\eta^R)\right)^{1/p}
    +2(\E\xi_1)^{1/p}+t
    \le
    C_{p,\rho,A,\eta}(\log N)^{-1/2}+t.
\end{align}
Finally, since $R^{2p}=(\log N)^{p^2/(\rho-p)}$, a union bound and the
probability estimates in \eqref{eq:conc:2} and \eqref{eq:conc:3} prove
\eqref{eq:quantization-concentration} for $N\geq16$.

For $2\leq N<16$, coupling both measures through $\delta_{\mathbf 0}$ and
using Jensen's inequality give
$\E\Wc_p(\eta_N,\eta)^\rho\leq C_\rho\E\|Z\|_\infty^\rho$; Markov's
inequality then extends \eqref{eq:quantization-concentration} to this finite
range after increasing $C_{p,\rho,A,\eta}$.
\end{proof}

\begin{corollary}
\label{cor:quantization-to-empirical-mean}
Under the hypotheses of Proposition
\ref{prop:quantization-to-empirical}, there exists
$C_{p,\rho,A,\eta}>0$ such that
\begin{equation}
    \E\big[\Wc_p(\eta_N,\eta)\big]
    \le
    \left(
        \E\big[\Wc_p(\eta_N,\eta)^p\big]
    \right)^{1/p}
    \le
    C_{p,\rho,A,\eta}(\log N)^{-1/2},
    \qquad N\ge2.
\end{equation}
\end{corollary}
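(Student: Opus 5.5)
The first inequality is Jensen's (or Lyapunov's) inequality, so the work lies in bounding the $p$-th moment $\E\Wc_p(\eta_N,\eta)^p$. I will not integrate the deviation bound \eqref{eq:quantization-concentration} directly. Its polynomial term $C N^{-\beta_{\rho,p}}t^{-\rho}$ is not integrable near $t=0$ when combined with the exponential term, although splitting at a threshold would work. Instead I will rerun the decomposition from the proof of Proposition~\ref{prop:quantization-to-empirical} at the level of expectations, which is cleaner.

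Fix $N\ge16$ and keep the truncation level $R=(\log N)^{p/[2(\rho-p)]}$, together with the laws $\eta^R$ and $\eta_N^R$. By the triangle inequality and convexity,
\[
\E\Wc_p^p(\eta_N,\eta)\le 3^{p-1}\Big[\E\Wc_p^p(\eta_N,\eta_N^R)+\E\Wc_p^p(\eta_N^R,\eta^R)+\Wc_p^p(\eta^R,\eta)\Big].
\]
The first and third terms are each bounded by $\E\xi_1\le R^{-(\rho-p)}\E\|Z\|_\infty^\rho=(\log N)^{-p/2}\E\|Z\|_\infty^\rho$, using \eqref{eq:conc:1} and \eqref{eq:conc:xi-moments}. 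Taking expectations of the samplewise bound in \eqref{eq:conc:1} involves no fluctuation term, so here we do not need von Bahr--Esseen or Rosenthal. The middle term is exactly \eqref{eq:conc:truncated-mean}, namely $C_p[A^p(\log N)^{-p/2}+R^pN^{-1/4}]$. As in Step 4 of that proof, $R^pN^{-1/4}=(\log N)^{p^2/[2(\rho-p)]}N^{-1/4}\le C_{p,\rho}(\log N)^{-p/2}$ for $N\ge16$. Adding the pieces gives $\E\Wc_p^p(\eta_N,\eta)\le C(\log N)^{-p/2}$, and taking the $p$-th root yields the claim.

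For $2\le N<16$, couple both measures through $\delta_{\mathbf 0}$ to get $\E\Wc_p^p(\eta_N,\eta)\le 2^{p}\E\|Z\|_\infty^p$. This is finite because $p<\rho$, and the factor $(\log N)^{-1/2}$ is bounded below on this finite range, so these values are absorbed into the constant. The constant depends on $\eta$ only through $\E\|Z\|_\infty^\rho$, since Lyapunov's inequality bounds $\E\|Z\|_\infty^p$ by it.

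The only point needing care is that \eqref{eq:conc:truncated-mean} was obtained for the specific codebook of size $\lfloor N^{1/2}\rfloor$, using the assumed quantization bound with $n=\lfloor N^{1/2}\rfloor\ge 4$. That condition holds for $N\ge16$, so the argument is routine and I expect no real obstacle.
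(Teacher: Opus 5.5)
Your proposal is correct and follows the paper's proof essentially verbatim. It uses the same split through $\eta_N^R$ and $\eta^R$ with $R=(\log N)^{p/[2(\rho-p)]}$, the bound $\E\xi_1\le R^{-(\rho-p)}\E\|Z\|_\infty^\rho=(\log N)^{-p/2}\E\|Z\|_\infty^\rho$ for the two truncation terms, \eqref{eq:conc:truncated-mean} together with the Step~4 computation for the middle term, and the coupling through $\delta_{\mathbf 0}$ for $2\le N<16$. The only difference is cosmetic: you combine the three terms via $(a+b+c)^p\le 3^{p-1}(a^p+b^p+c^p)$, whereas the paper applies Minkowski's inequality in $L^p$, which affects only the constant.
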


\begin{proof}
For $N\geq16$, retain the notation $R$, $\eta^R$, $\eta_N^R$, and $\xi_i$
from the preceding proof. Minkowski's inequality and the canonical couplings
in \eqref{eq:conc:1} give
\begin{align}
    \left(
        \E\Wc_p(\eta_N,\eta)^p
    \right)^{1/p}
    &\le
    \left(
        \E\Wc_p(\eta_N,\eta_N^R)^p
    \right)^{1/p}
    +
    \left(
        \E\Wc_p(\eta_N^R,\eta^R)^p
    \right)^{1/p}
    +\Wc_p(\eta^R,\eta) \\
    &\le
    2(\E\xi_1)^{1/p}
    +
    \left(
        \E\Wc_p(\eta_N^R,\eta^R)^p
    \right)^{1/p} \\
    &\le
    C_{p,\rho,A,\eta}
    \left[
        (\log N)^{-1/2}
        +RN^{-1/(4p)}
        +R^{-(\rho-p)/p}
    \right] \\
    &\le C_{p,\rho,A,\eta}(\log N)^{-1/2}.
\end{align}
The third line follows from \eqref{eq:conc:xi-moments} and
\eqref{eq:conc:truncated-mean}, while the last line follows from the choice of
$R$ and the calculation in Step~4.

For $2\leq N<16$, coupling both measures through $\delta_{\mathbf 0}$ gives
$\bigl(\E\Wc_p(\eta_N,\eta)^p\bigr)^{1/p}
\leq2\bigl(\E\|Z\|_\infty^p\bigr)^{1/p}$, which is absorbed into the
claimed bound after increasing the constant.
\end{proof}

\begin{proof}[Proof of Theorem~\ref{thm:endpoint-path-law}]
The moment estimate \eqref{eq:endpoint:supremum-moment} and Proposition
\ref{prop:endpoint-functional-quantization}, applied with $q=p$, verify the
hypotheses of Corollary \ref{cor:quantization-to-empirical-mean} with
$Z=X$ and $\eta=\mu$. The dependence of the constants in these three results
gives $C=C\bigl(d,T,p,\rho,\E\Lambda_X^\rho\bigr)$, as claimed.
\end{proof}

\begin{proof}[Proof of Theorem~\ref{cor:polynomial-path-law-concentration}]
The moment estimate \eqref{eq:endpoint:supremum-moment} and Proposition
\ref{prop:endpoint-functional-quantization}, applied with $q=p$, verify the
hypotheses of Proposition \ref{prop:quantization-to-empirical} with
$Z=X$, $\eta=\mu$, and $\eta_N=\mu_N$. This proves
\eqref{eq:main_concen} with
$C=C\bigl(d,T,p,\rho,\E\Lambda_X^\rho\bigr)$.

Taking $t=(\log N)^{-1/2}$ in \eqref{eq:main_concen} changes the threshold to
$(C+1)(\log N)^{-1/2}$. The exponential term is bounded by
$CN^{-\beta_{\rho,p}}(\log N)^{\rho/2}$ uniformly over $N\geq2$ after
increasing $C$. Absorbing the additional one in the threshold and renaming
the constant proves the remaining assertion.
\end{proof}

\begin{proof}[Proof of Proposition~\ref{prop:transport-concentration}]
The product law $\mu^{\otimes N}$ satisfies the same quadratic transport
inequality for the product metric
\begin{equation}
    \left(
        \sum_{i=1}^N d_\infty(x_i,x_i')^2
    \right)^{1/2}.
\end{equation}
The reverse triangle inequality and the coupling of equally labelled atoms
give
\begin{align}
    &\left|
        \Wc_p\left(\frac1N\sum_{i=1}^N\delta_{x_i},\mu\right)
        -
        \Wc_p\left(\frac1N\sum_{i=1}^N\delta_{x_i'},\mu\right)
    \right|  \\
    &\qquad\le
    \left(
        \frac1N\sum_{i=1}^N d_\infty(x_i,x_i')^p
    \right)^{1/p}
    \le
    N^{-1/\max\{p,2\}}
    \left(
        \sum_{i=1}^N d_\infty(x_i,x_i')^2
    \right)^{1/2}.
\end{align}
The product tensorization theorem and the corresponding mean-centered
concentration inequality of Gozlan and L\'eonard
\cite[Theorem~4.12 and Lemma~6.1]{GozlanLeonard2007}
therefore yield
\begin{equation}
    \P\left(
        \Wc_p(\mu_N,\mu)
        >\E\Wc_p(\mu_N,\mu)+t
    \right)
    \le
    \exp\left(
        -\frac{N^{2/\max\{p,2\}}t^2}{2C_T}
    \right).
\end{equation}
Theorem~\ref{thm:endpoint-path-law} bounds the expectation by
$C\bigl(d,T,p,\rho,\E\Lambda_X^\rho\bigr)(\log N)^{-1/2}$.
\end{proof}

\subsection{Proof of Proposition \ref{prop:arbitrarily-slow-empirical-laws}}\label{sec:slowrate}

\begin{proof}
\noindent\textbf{Step 1. Construction}
Let $e_1$ be the first vector of the canonical basis of $\mathbb R^d$.  For
$j\geq 1$, set
\[
t_j:=T2^{-j},\qquad h_j:=T2^{-j-3},
\]
and define the continuous triangular function
\[
\phi_j(t)=\max\left\{1-\frac{|t-t_j|}{h_j},0\right\},
\qquad 0\leq t\leq T.
\]
The support of $\phi_j$ is the interval $[t_j-h_j,t_j+h_j]$.  
These intervals are pairwise disjoint, they accumulate only at zero, and $\phi_j(t_j)=1$.

Define a decreasing sequence of amplitudes by
$a_j=r_{\max\{1,\lfloor 2^{j-2}\rfloor\}}, j\geq 1.$
Then $0<a_j\leq 1$ and $a_j\to 0$.  Let $(\varepsilon_j)_{j\geq 1}$ be
independent Rademacher random variables and set
\[
X(t)=\sum_{j=1}^{\infty}a_j\varepsilon_j\phi_j(t)e_1,
\qquad 0\leq t\leq T.
\]
Because the supports of the functions $\phi_j$ are disjoint, for every $m\geq 0$
and every choice of signs the tail satisfies
\[
\left\|\sum_{j>m}a_j\varepsilon_j\phi_j e_1\right\|_\infty
=\sup_{j>m}a_j=a_{m+1}.
\]
The series therefore converges uniformly.  Its limit is continuous away from zero,
and continuity at zero follows from $a_j\to 0$.  Thus $X$ is a $C^d$-valued
random variable.

\vspace{0.5em}

\noindent\textbf{Step 2. Compact support}
Consider the set of all possible paths as described in Step 1, i.e.,
\[
K:=\left\{
\sum_{j=1}^{\infty}a_j\sigma_j\phi_j e_1:
\sigma_j\in\{-1,1\}\text{ for every }j
\right\}.
\]
Every element of $K$ has supremum norm at most $a_1\leq 1$.  To verify
compactness, take an arbitrary sequence in $K$.  Since each sign has only two
possible values, a diagonal extraction produces a subsequence and signs
$(\sigma_j)_{j\geq 1}$ such that, for every fixed $m$, the first $m$ signs of the
subsequence eventually agree with $\sigma_1,\ldots,\sigma_m$.  Once this agreement
holds, the supremum distance from the corresponding limiting path is at most
$2a_{m+1}$.  Since $a_{m+1}\to 0$, the extracted subsequence converges uniformly
to an element of $K$.  Hence $K$ is sequentially compact, and therefore compact,
in the metric space $\Cc^d$.  With
\[
\mu=\mathcal L(X),
\]
the measure $\mu$ is supported on $K$.

\vspace{0.5em}

\noindent\textbf{Step 3. Lower Bound}
We next prove the lower bound.  Fix an integer $m\geq 1$.  For each
$\sigma=(\sigma_1,\ldots,\sigma_m)\in\{-1,1\}^m$, let $K_\sigma$ be the cylinder consisting of paths whose first $m$ signs equal $\sigma$.  The sets
$\{K_\sigma\}_{\sigma\in \{-1,+1\}^m}$ form a partition of $K$, and independence of the Rademacher variables
gives
\[
\mu(K_\sigma)=2^{-m}.
\]
If $\sigma\neq\tau$, choose an index $j\leq m$ at which the two sign vectors
differ.  For any $x\in K_\sigma$ and $y\in K_\tau$, evaluation at the peak $t_j$
gives
\begin{align}\label{eq:distance-lowerbound}
\|x-y\|_\infty
\geq |x(t_j)-y(t_j)|
=2a_j
\geq 2a_m.
\end{align}

Let $\nu$ be supported on the points $y_1,\ldots,y_q$, where $q\leq N$.  Call a
cylinder $K_\sigma$ close to $y_i$ when
\[
\inf_{x\in K_\sigma}\|x-y_i\|_\infty<a_m.
\]
By \eqref{eq:distance-lowerbound}, any point $y_i$ can be close to at most one cylinder.  
Consequently, at most $N$ of the $2^m$ cylinders are close to the support of $\nu$.

Choose
\[
m=\left\lceil\log_2(2N)\right\rceil.
\]
Let $U$ be the union of the cylinders that are not close to any point in the support
of $\nu$.  Since $2^m\geq 2N$, the equal cylinder masses imply
\[
\mu(U)\geq 1-\frac{N}{2^m}\geq\frac12.
\]
Moreover, every point of $U$ is at distance at least $a_m$ from the support of
$\nu$.  Therefore, for every coupling $\pi\in\Pi(\mu,\nu)$,
\[
\int_{C^d\times C^d}\|x-y\|_\infty^p\,\pi(dx,dy)
\geq a_m^p\pi(U\times C^d)
=a_m^p\mu(U)
\geq\frac{a_m^p}{2}.
\]
Taking the infimum over all couplings yields
\[
\Wc_p(\mu,\nu)\geq 2^{-1/p}a_m.
\]
The choice of $m$ gives $2^{m-2}<N$, and hence
\[
\max\{1,\lfloor 2^{m-2}\rfloor\}\leq N.
\]
Since $(r_N)$ is nonincreasing, it follows that $a_m\geq r_N$.  This proves the
first assertion.  Applying it to each realization of $\mu_N$, which is supported
on at most $N$ points, proves the samplewise empirical lower bound.

\end{proof}

\subsection{Proof of Proposition \ref{prop:SDE}}
\label{subsec:proof-sde-application}
\begin{proof}
Since \(X\) is continuous and adapted, the \(\Cc^d\)-valued process
\(t\mapsto X_{t\wedge\cdot}\) is also adapted. Moreover, for almost every
\(\omega\), the map
\[
t\longmapsto X_{t\wedge\cdot}(\omega)
\]
is continuous with respect to the uniform norm on \(\Cc^d\). Hence
the stopped-path process is progressively measurable. Since \(b\) and
\(\sigma\) are Borel measurable, it follows that
\[
a_t=b(t,X_{t\wedge\cdot}),
\qquad
H_t=\sigma(t,X_{t\wedge\cdot})
\]
are progressively measurable. The equation in Assumption~\ref{ass:SDE} can therefore be written as
\begin{equation}
X_t
=
X_0+\int_0^t a_s\,ds+\int_0^t H_s\,dW_s,
\qquad 0\leq t\leq T.\label{eq:sde:semimartingale}
\end{equation}
It remains to verify the required integrability of the time-essential
suprema. Define
\[
X_t^*:=\sup_{0\leq u\leq t}|X_u|.
\]
We first prove that
\[
\mathbb E\bigl[(X_T^*)^\rho\bigr]<\infty.
\]
For \(n\in\mathbb N\), let
\[
\tau_n
:=
\inf\{t\in[0,T]:X_t^*\geq n\}\wedge T,
\qquad
X_t^{*,n}
:=
\sup_{0\leq u\leq t}|X_{u\wedge\tau_n}|.
\]
For \(s\leq \tau_n\), the linear growth assumption gives
\[
|a_s|+\|H_s\|
\leq
L\bigl(1+\|X_{s\wedge\cdot}\|_\infty\bigr)
=
L(1+X_s^*)
=
L(1+X_s^{*,n}).
\]
Consequently,
\[
\mathbf 1_{\{s\leq\tau_n\}}
\bigl(|a_s|+\|H_s\|\bigr)
\leq
L(1+X_s^{*,n}).
\]
Using the stopped form of \eqref{eq:sde:semimartingale}, the elementary
inequality
\[
|x+y+z|^\rho\leq C_\rho\bigl(|x|^\rho+|y|^\rho+|z|^\rho\bigr),
\]
and the Burkholder--Davis--Gundy inequality, we obtain, for
\(0\leq t\leq T\),
\begin{align}
\mathbb E\bigl[(X_t^{*,n})^\rho\bigr]
&\leq
C_\rho\mathbb E|X_0|^\rho
+
C_\rho\mathbb E
\left(
\int_0^t
\mathbf 1_{\{s\leq\tau_n\}}|a_s|\,ds
\right)^\rho
+
C_\rho\mathbb E
\left[
\sup_{0\leq u\leq t}
\left|
\int_0^u
\mathbf 1_{\{s\leq\tau_n\}}H_s\,dW_s
\right|^\rho
\right]
\notag\\
&\leq
C_\rho\mathbb E|X_0|^\rho
+
C_{\rho,L}\mathbb E
\left(
\int_0^t(1+X_s^{*,n})\,ds
\right)^\rho
+
C_{\rho,L}\mathbb E
\left(
\int_0^t(1+X_s^{*,n})^2\,ds
\right)^{\rho/2}.\label{eq:sde:stopped-moment-bdg}
\end{align}
By Hölder's inequality,
\begin{equation}
\left(
\int_0^t(1+X_s^{*,n})\,ds
\right)^\rho
\leq
t^{\rho-1}
\int_0^t(1+X_s^{*,n})^\rho\,ds.\label{eq:sde:drift-holder}
\end{equation}
Furthermore,
\begin{align*}
\left(
\int_0^t(1+X_s^{*,n})^2\,ds
\right)^{\rho/2}
&\leq
\left[
(1+X_t^{*,n})
\int_0^t(1+X_s^{*,n})\,ds
\right]^{\rho/2}.
\end{align*}
Thus, by Young's inequality, for every \(\varepsilon>0\),
\begin{align}
\left(
\int_0^t(1+X_s^{*,n})^2\,ds
\right)^{\rho/2}
&\leq
\varepsilon(1+X_t^{*,n})^\rho
+
C_{\varepsilon,\rho}
\left(
\int_0^t(1+X_s^{*,n})\,ds
\right)^\rho
\notag\\
&\leq
\varepsilon(1+X_t^{*,n})^\rho
+
C_{\varepsilon,\rho,T}
\int_0^t(1+X_s^{*,n})^\rho\,ds.\label{eq:sde:diffusion-young}
\end{align}
Combining \eqref{eq:sde:stopped-moment-bdg},
\eqref{eq:sde:drift-holder}, and \eqref{eq:sde:diffusion-young} with
\[
(1+x)^\rho\leq 2^{\rho-1}(1+x^\rho),
\qquad x\geq0,
\]
gives
\begin{align*}
\mathbb E\bigl[(X_t^{*,n})^\rho\bigr]
\leq
C_{\rho,L,T}\bigl(1+\mathbb E|X_0|^\rho\bigr)
+
C_{\rho,L}\varepsilon
\mathbb E\bigl[(X_t^{*,n})^\rho\bigr]
+
C_{\varepsilon,\rho,L,T}
\int_0^t
\mathbb E\bigl[(X_s^{*,n})^\rho\bigr]\,ds.
\end{align*}
Choose \(\varepsilon>0\) sufficiently small so that the second term on
the right-hand side can be absorbed into the left-hand side. We then
obtain
\[
\mathbb E\bigl[(X_t^{*,n})^\rho\bigr]
\leq
C_{\rho,L,T}\bigl(1+\mathbb E|X_0|^\rho\bigr)
+
C_{\rho,L,T}
\int_0^t
\mathbb E\bigl[(X_s^{*,n})^\rho\bigr]\,ds.
\]
Grönwall's lemma yields
\[
\sup_{n\in\mathbb N}
\mathbb E\bigl[(X_T^{*,n})^\rho\bigr]
\leq
C_{\rho,L,T}\bigl(1+\mathbb E|X_0|^\rho\bigr)
<\infty.
\]
Since \(X\) has continuous paths on the compact interval \([0,T]\),
we have \(X_T^*<\infty\) almost surely and
\[
X_T^{*,n}\longrightarrow X_T^*
\qquad\text{almost surely}.
\]
Fatou's lemma therefore gives
\[
\mathbb E\bigl[(X_T^*)^\rho\bigr]
\leq
\liminf_{n\to\infty}
\mathbb E\bigl[(X_T^{*,n})^\rho\bigr]
\leq
C_{\rho,L,T}\bigl(1+\mathbb E|X_0|^\rho\bigr).
\]
Finally, the linear growth condition implies, for every \(s\in[0,T]\),
\[
|a_s|+\|H_s\|
\leq
L\bigl(1+\|X_{s\wedge\cdot}\|_\infty\bigr)
=
L(1+X_s^*)
\leq
L(1+X_T^*).
\]
Hence
\[
\operatorname*{ess\,sup}_{0\leq s\leq T}|a_s|
\leq L(1+X_T^*),
\qquad
\operatorname*{ess\,sup}_{0\leq s\leq T}\|H_s\|
\leq L(1+X_T^*).
\]
Since \(L\geq1\) and \(|X_0|\leq X_T^*\), it follows that
\[
\Lambda_X
\leq
L(1+X_T^*).
\]
Therefore,
\[
\mathbb E\Lambda_X^\rho
\leq
L^\rho 2^{\rho-1}
\left(
1+\mathbb E\bigl[(X_T^*)^\rho\bigr]
\right)
\leq
C_{\rho,L,T}\bigl(1+\mathbb E|X_0|^\rho\bigr).
\]
Thus all the requirements of
Assumption~\ref{assum:regularity} are satisfied. The result and the stated
dependence of the constant now follow from
Theorem~\ref{thm:endpoint-path-law}.
\end{proof}

\subsection{Proof of Proposition \ref{thm:app}}
\label{subsec:proof-mckean-vlasov-application}
The proof has two components. The first verifies that the McKean-Vlasov limit equation
satisfies the quantization estimate. The second couples each
interacting particle with a conditional independent copy driven by the same
initial condition and Brownian motion, and then propagates the i.i.d.\ 
empirical error through the Lipschitz dynamics.

\subsubsection{Quantization of the limit dynamics}

\begin{corollary}
\label{coro:conditional_copy}
Let $p\geq1$, and let $Z$ be a $\Cc^d$-valued random variable. Assume that,
for some $\rho>p$ and $A>0$,
\begin{equation}
\E\norm{Z}_\infty^\rho<\infty,
\qquad
e_{n,p}(Z)
\leq
A(\log n)^{-1/2},
\qquad
n\geq2.
\label{eq:conditional-copy-assumption}
\end{equation}
Let $\Gc\subset\Fc$ be a sub-$\sigma$-field, and set
\[
\eta^{\Gc}:=\Lc(Z\mid\Gc).
\]
Let $Z_1,\ldots,Z_N$ be conditionally independent given $\Gc$ and suppose
that
\[
\Lc(Z_i\mid\Gc)=\eta^{\Gc},
\qquad
i=1,\ldots,N.
\]
Set
\[
\eta_N
:=
\frac1N\sum_{i=1}^N\delta_{Z_i}.
\]
There exists a constant $C_{p,\rho,A,\eta}>0$, where
$C_{p,\rho,A,\eta}$ depends on the unconditional law
$\eta:=\Lc(Z)$ only through $\E\norm{Z}_\infty^\rho$, such that
\begin{equation}
\E\big[\Wc_p(\eta_N,\eta^{\Gc})\big]
\leq
\left(
    \E\big[\Wc_p(\eta_N,\eta^{\Gc})^p\big]
\right)^{1/p}
\leq
C_{p,\rho,A,\eta}(\log N)^{-1/2},
\qquad
N\geq2.
\label{eq:conditional-copy-conclusion}
\end{equation}
\end{corollary}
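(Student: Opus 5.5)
The plan is to rerun the proof of Corollary~\ref{cor:quantization-to-empirical-mean} (that is, Steps 1--3 of Proposition~\ref{prop:quantization-to-empirical}) conditionally on $\Gc$. Every object there that depends only on the codebook and the truncation radius is deterministic, so it works equally well conditionally. Unconditional expectations then come from the tower property. The crucial observation is that the quantization hypothesis \eqref{eq:conditional-copy-assumption} only enters through the quantity $\E\, d_\infty(T_R(Z),\Ac)^p$. This equals $\E\bigl[\E[d_\infty(T_R(Z),\Ac)^p\mid\Gc]\bigr]$, so one deterministic codebook serves all realizations of $\eta^{\Gc}$ simultaneously. No quantization estimate for the random measure $\eta^{\Gc}$ itself is needed.

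For $N\ge16$, fix $R=(\log N)^{p/[2(\rho-p)]}$ and the radial truncation $T_R$. Let $\Ac$ be the projected codebook from Step~2 of Proposition~\ref{prop:quantization-to-empirical}: it has at most $\lfloor N^{1/2}\rfloor$ points and diameter at most $2R$, and it is chosen using the unconditional law $\eta$. Let $Q$ be the associated nearest-point map. Work with regular conditional distributions, and set $\eta^{\Gc,R}:=(T_R)_\#\eta^{\Gc}$, $\bar\eta^{\Gc}:=Q_\#\eta^{\Gc,R}$, and $\bar\eta_N:=Q_\#(T_R)_\#\eta_N$. Minkowski's inequality in $L^p(\P)$ then splits $(\E\Wc_p(\eta_N,\eta^{\Gc})^p)^{1/p}$ into five terms, which I bound in turn.
\begin{itemize}
\item The two truncation terms are bounded by $(\E\xi_1)^{1/p}$. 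Here I use the canonical couplings: $\Wc_p^p(\eta^{\Gc},\eta^{\Gc,R})\le\E[\xi_1\mid\Gc]$ holds almost surely, and $\Wc_p^p(\eta_N,(T_R)_\#\eta_N)\le N^{-1}\sum_i\xi_i$. Taking expectations gives at most $R^{-(\rho-p)}\E\|Z\|_\infty^\rho$, since each $Z_i$ has unconditional law $\eta$.
\item The two projection terms are bounded by $\E\, d_\infty(T_R(Z),\Ac)^p\le C_pA^p(\log N)^{-p/2}$, again by the tower property. Both $\eta^{\Gc}$ and the $Z_i$ have unconditional law $\eta$.
\item The multinomial term $\E\Wc_p^p(\bar\eta_N,\bar\eta^{\Gc})$ is handled by conditioning on $\Gc$. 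The indicators $\mathbf 1_{\{Q(T_R(Z_i))=a\}}$ are then conditionally i.i.d.\ Bernoulli with parameter $\bar\eta^{\Gc}(\{a\})$. The Wasserstein--TV bound, conditional Jensen and Cauchy--Schwarz over $a\in\Ac$ give $\E[\Wc_p^p(\bar\eta_N,\bar\eta^{\Gc})\mid\Gc]\le (2R)^pN^{-1/2}|\Ac|^{1/2}/2\le C_pR^pN^{-1/4}$ almost surely. This bound is uniform in $\omega$, so it survives taking expectations.
\end{itemize}
Collecting the terms gives the same bound $C[(\log N)^{-1/2}+RN^{-1/(4p)}+R^{-(\rho-p)/p}]$ as in Step~4. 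The choice of $R$ then yields $C(\log N)^{-1/2}$. For $2\le N<16$, I couple both measures through $\delta_{\mathbf 0}$, which gives the bound $2(\E\|Z\|_\infty^p)^{1/p}$. The first inequality in \eqref{eq:conditional-copy-conclusion} is Jensen's inequality.

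The main obstacle is measurability and the rigorous conditional multinomial step. One needs a regular version of $\eta^{\Gc}$, and one needs the conditional product structure of $(Z_1,\dots,Z_N)$ given $\Gc$ so that the Bernoulli variance computation applies $\omega$-wise. I would handle this by disintegrating $\P$ along $\Gc$. For $\P$-a.e.\ $\omega$, the $Z_i$ are then i.i.d.\ with law $\eta^{\Gc}(\omega)$, and the deterministic pieces of Step~3 apply verbatim for fixed $\omega$. Joint measurability of $\Wc_p$ in its arguments, via lower semicontinuity, justifies integrating these $\omega$-wise bounds.
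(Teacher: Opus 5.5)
Your proposal is correct and follows essentially the same route as the paper: you rerun the proof of Corollary~\ref{cor:quantization-to-empirical-mean} conditionally on $\Gc$, with a codebook built from the unconditional law. The only substantive change is the conditional Bernoulli variance computation in the multinomial step, and the tower property handles the truncation and projection terms. Your explicit remarks, that one deterministic codebook serves every realization of $\eta^{\Gc}$ and on the measurability of the $\omega$-wise bounds, spell out what the paper's one-line ``taking expectations afterwards, the remainder of the proof is unchanged'' leaves implicit.
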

\begin{proof}
The proof follows the same argument as Corollary
\ref{cor:quantization-to-empirical-mean}, conditioning throughout on
$\Gc$. We only indicate the modification in Step~3 of Proposition
\ref{prop:quantization-to-empirical}, where independence of the sample is
used.

Let $\bar\eta^{\Gc}$ denote the conditional law of
$Q(T_R(Z_1))$ given $\Gc$. For every $a\in\mathcal A$,
\[
    \bar\eta_N(\{a\})
    =
    \frac1N\sum_{i=1}^N
    \mathbf 1_{\{Q(T_R(Z_i))=a\}}.
\]
Conditionally on $\Gc$, the random variables
\[
    \mathbf 1_{\{Q(T_R(Z_i))=a\}},
    \qquad i=1,\dots,N,
\]
are independent Bernoulli random variables with conditional success
probability $\bar\eta^{\Gc}(\{a\})$. Hence
\[
    \E\left[
        \bar\eta_N(\{a\})
        \,\middle|\,
        \Gc
    \right]
    =
    \bar\eta^{\Gc}(\{a\}),
\]
and
\[
    \operatorname{Var}\left(
        \bar\eta_N(\{a\})
        \,\middle|\,
        \Gc
    \right)
    =
    \frac{
        \bar\eta^{\Gc}(\{a\})
        \bigl(1-\bar\eta^{\Gc}(\{a\})\bigr)
    }{N}.
\]
Therefore the estimate in Step~3 holds conditionally on $\Gc$ with
$\bar\eta$ replaced by $\bar\eta^{\Gc}$. Taking expectations afterwards,
the remainder of the proof is unchanged.
\end{proof}

For a sub-$\sigma$-field $\Gc\subset\Fc$, let
\[
\eta^{\Gc}:=\Lc(Z\mid\Gc)
\]
be a regular conditional law of a $\Cc^d$-valued random variable $Z$. For
$n\in\mathbb N$ and $p\geq1$, define the conditional $n$-point functional
quantization error by
\begin{equation}
\begin{aligned}
e_{n,p}^{\Gc}(Z)
:=
\left(
\operatorname*{ess\,inf}_{\substack{A\subset\Cc^d\\1\leq |A|\leq n}}
\E\left[
    d_\infty(Z,A)^p
    \,\middle|\,
    \Gc
\right]
\right)^{1/p}.
\end{aligned}
\label{eq:conditional-functional-quantization}
\end{equation}
Equivalently, almost surely,
\[
e_{n,p}^{\Gc}(Z)
=
\inf_{\substack{A\subset\Cc^d\\1\leq |A|\leq n}}
\left(
    \int_{\Cc^d}d_\infty(z,A)^p\,\eta^{\Gc}(\dd z)
\right)^{1/p}.
\]
Since $\Cc^d$ is separable, the essential infimum may equivalently be taken
over codebooks contained in a fixed countable dense subset of $\Cc^d$.

\begin{lemma}
\label{lem:conditional-quantization-comparison}
For every $\Cc^d$-valued random variable $Z$, every sub-$\sigma$-field
$\Gc\subset\Fc$, and every $n\in\mathbb N$,
\begin{equation}
\E\left[
    \bigl(e_{n,p}^{\Gc}(Z)\bigr)^p
\right]
\leq
e_{n,p}(Z)^p.
\label{eq:conditional-quantization-comparison}
\end{equation}
\end{lemma}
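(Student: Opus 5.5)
The plan is to compare the essential infimum with any single fixed deterministic codebook, and then optimize over that codebook after taking expectations. The whole argument is a one-line consequence of the tower property and the definition of the essential infimum.

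Fix an arbitrary deterministic codebook $A\subset\Cc^d$ with $1\le|A|\le n$. By the definition \eqref{eq:conditional-functional-quantization} of $e_{n,p}^{\Gc}(Z)$ as an essential infimum over such codebooks, we have, almost surely,
\[
\bigl(e_{n,p}^{\Gc}(Z)\bigr)^p\le \E\left[d_\infty(Z,A)^p\,\middle|\,\Gc\right].
\]
The function $z\mapsto d_\infty(z,A)$ is a minimum of finitely many continuous functions, hence Borel. So $d_\infty(Z,A)^p$ is a nonnegative random variable and the conditional expectation is well defined, possibly taking the value $+\infty$. Taking expectations and applying the tower property then gives
\[
\E\left[\bigl(e_{n,p}^{\Gc}(Z)\bigr)^p\right]\le\E\left[d_\infty(Z,A)^p\right].
\]

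Since $A$ was arbitrary, we take the infimum over all codebooks with at most $n$ points on the right-hand side. This yields $e_{n,p}(Z)^p$, which is \eqref{eq:conditional-quantization-comparison}. If $e_{n,p}(Z)=\infty$, the inequality is trivial.

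The only point needing care is measurability: $e_{n,p}^{\Gc}(Z)$ must be a genuine random variable for its expectation to make sense. The essential infimum of a family of random variables always exists as a $\Gc$-measurable random variable. Moreover, it is dominated almost surely by each member of the family, and that domination is the only property used above. Alternatively, one can use the remark following the definition: restricting to codebooks drawn from a fixed countable dense subset of $\Cc^d$ makes the essential infimum a countable pointwise infimum, which is again measurable. Either route suffices, so I do not expect a real obstacle here.
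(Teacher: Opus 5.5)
Your proposal is correct and follows the paper's proof: the paper likewise bounds $\bigl(e_{n,p}^{\Gc}(Z)\bigr)^p$ almost surely by $\E[d_\infty(Z,A)^p\mid\Gc]$ for each fixed deterministic codebook $A$, takes expectations, and then takes the infimum over $A$. Your additional remarks on measurability and on the possibly infinite conditional expectation are harmless and are not needed beyond what you state.
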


\begin{proof}
For every deterministic codebook $A\subset\Cc^d$ with $1\leq|A|\leq n$,
the definition gives
\[
\bigl(e_{n,p}^{\Gc}(Z)\bigr)^p
\leq
\E\left[
    d_\infty(Z,A)^p
    \,\middle|\,
    \Gc
\right]
\qquad\text{almost surely}.
\]
Taking expectations yields
\[
\E\left[
    \bigl(e_{n,p}^{\Gc}(Z)\bigr)^p
\right]
\leq
\E d_\infty(Z,A)^p.
\]
Taking the infimum over all such deterministic codebooks proves the claim.
\end{proof}

\begin{lemma}
\label{lem:nonlinear-quantization}
Under Assumption~\ref{ass:coefficients}, let $\Gc := \Fc^{W^0}_T$, then
$\E\norm{Y}_{\infty}^{r}<\infty$
and there is a constant $C$ such that
\[
\E[e^{\Gc}_{n,p}(Y)] \leq e_{n,p}(Y)\leq C(\log n)^{-1/2},
\qquad n\geq2.
\]
\end{lemma}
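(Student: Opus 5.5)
The plan is to view $Y$ as an It\^o process driven by the $(m+l)$-dimensional Brownian motion $(W,W^0)$ and to reduce everything to Proposition~\ref{prop:endpoint-functional-quantization} and Lemma~\ref{lem:conditional-quantization-comparison}. Set
\[
a_s:=b(s,Y_{s\wedge\cdot},\mu_s),
\qquad
H_s:=\bigl(\sigma(s,Y_{s\wedge\cdot},\mu_s),\,\sigma_0(s,Y_{s\wedge\cdot},\mu_s)\bigr)\in\R^{d\times(m+l)} .
\]
Progressive measurability of $a$ and $H$ follows as in the proof of Proposition~\ref{prop:SDE}. The stopped-path process is continuous in $t$ for the uniform norm. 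The flow $s\mapsto\mu_s$ is adapted to the filtration of $W^0$, which is contained in $(\Fc_t)$, because $Y_{s\wedge\cdot}$ depends only on the noise up to time $s$ and $W^0$ has independent increments. The coefficients are jointly Borel.

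\textbf{Moment estimate.} The next step is the bound $\E\norm{Y}_\infty^r<\infty$. From the linear-growth consequence of Assumption~\ref{ass:coefficients},
\[
|a_s|+\norm{H_s}\le C L\bigl(1+Y_s^*+\Wc_p(\mu_s,\delta_{\mathbf 0})\bigr),
\qquad Y_s^*:=\sup_{u\le s}|Y_u| .
\]
Moreover $\Wc_p(\mu_s,\delta_{\mathbf 0})=\bigl(\E[(Y^*_s)^p\mid W^0]\bigr)^{1/p}$. By conditional Jensen (since $r>p$), $\E\,\Wc_p(\mu_s,\delta_{\mathbf 0})^r\le\E (Y_s^*)^r$. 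The localization, BDG and Young argument from the proof of Proposition~\ref{prop:SDE} then goes through with exponent $r$. The law term contributes another $\int_0^t\E(Y_s^{*})^r\,ds$-type term, which Gr\"onwall absorbs.

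Here is where I expect the main subtlety. Localization by $\tau_n$ does not localize the conditional-law term. I would first argue finiteness of $\E(Y_T^*)^r$ a priori, for instance from the fixed-point construction of $Y$ in the space with finite $r$-th supremum moments, using $\E|\xi|^r<\infty$. After that the Gr\"onwall bound is legitimate without stopping, which gives $\E(Y_T^*)^r\le C(1+\E|\xi|^r)$.

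\textbf{Quantization bound.} Assumption~\ref{assum:regularity} needs $\Lambda_Y$, and we have
\[
\Lambda_Y\le C L\Bigl(1+Y_T^*+\sup_s\Wc_p(\mu_s,\delta_{\mathbf 0})\Bigr),
\qquad
\sup_s\Wc_p(\mu_s,\delta_{\mathbf 0})\le\bigl(\E[(Y_T^*)^p\mid W^0]\bigr)^{1/p} .
\]
By conditional Jensen, $\E\Lambda_Y^r\le C\bigl(1+\E(Y_T^*)^r\bigr)<\infty$. Proposition~\ref{prop:endpoint-functional-quantization} then applies with $\rho=r$, $q=p<r$, and $m$ replaced by $m+l$, giving $e_{n,p}(Y)\le C(\log n)^{-1/2}$.

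\textbf{Conditional comparison.} Finally, Jensen's inequality together with Lemma~\ref{lem:conditional-quantization-comparison} yields
\[
\E\bigl[e^{\Gc}_{n,p}(Y)\bigr]\le\Bigl(\E\bigl[(e^{\Gc}_{n,p}(Y))^p\bigr]\Bigr)^{1/p}\le e_{n,p}(Y),
\]
which completes the chain of inequalities.
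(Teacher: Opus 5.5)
Your proof is correct and follows the paper's route. Like the paper, you write $Y$ as an It\^o process driven by $(W,W^0)$ with $H=(\sigma,\sigma_0)$, obtain $\E\norm{Y}_\infty^r<\infty$ by BDG/Gr\"onwall with conditional Jensen on the law term, bound $\Lambda_Y$, and invoke Proposition~\ref{prop:endpoint-functional-quantization} with $\rho=r$, $q=p$. Your differences are refinements rather than a new approach:
\begin{itemize}
\item You bound $\sup_s\Wc_p(\mu_s,\delta_{\mathbf 0})$ by monotonicity in $s$, where the paper uses Doob's maximal inequality.
\item You rightly flag, and fix via the fixed-point construction, that stopping at $\tau_n$ does not localize the conditional-law term; the paper glosses over this.
\item You spell out the first inequality via Lemma~\ref{lem:conditional-quantization-comparison} and Jensen, which the paper leaves implicit.
\item One small omission: for progressive measurability you should also cite the $\Wc_p$-continuity of $s\mapsto\mu_s$, not just its adaptedness.
\end{itemize}
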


\begin{proof}
The linear-growth estimate in Assumption~\ref{ass:coefficients}, together
with the elementary inequality $(a+b+c)^r\leq C_r(a^r+b^r+c^r)$, gives
\[
\abs{b(t,x,\nu)}^r+\norm{\sigma(t,x,\nu)}^r+\norm{\sigma_0(t,x,\nu)}^r
\leq
C\bigl(
    1+\norm{x}_\infty^r
    +\Wc_p(\nu,\delta_{\mathbf 0})^r
\bigr).
\]
The standard localization argument used in the proof of Proposition
\ref{prop:SDE}, with H\"older's inequality for the drift term,
the Burkholder--Davis--Gundy inequality for the martingale term, and Young's
inequality when $r<2$, gives
\[
\E\norm{Y_{t\wedge\cdot}}_\infty^r
\leq
C\bigl(1+\E\abs{\xi}^{r}\bigr)
+
C\int_0^t
\left(
    \E\norm{Y_{s\wedge\cdot}}_\infty^r
    +\Wc_p(\mu_s,\delta_{\mathbf 0})^r
\right)\dd s.
\]
Because $r>p$ and $\mu_s=\Lc(Y_{s\wedge\cdot}|W^0)$,
\[
\Wc_p(\mu_s,\delta_{\mathbf 0})^r
=
\left(
    \E\norm{Y_{s\wedge\cdot}}_\infty^p
|W^0\right)^{r/p}
\leq
\E[\norm{Y_{s\wedge\cdot}}_\infty^r|W^0].
\]
Taking expectation and Gronwall's inequality therefore shows that
\[
\E\norm{Y}_{\infty}^{r}
\leq
C\bigl(1+\E\abs{\xi}^{r}\bigr).
\]

The stopped-path law flow $t\mapsto\mu_t$ is deterministic. It is also
continuous in $\Wc_p$, since the coupling provided by the same process gives
\[
\Wc_p(\mu_t,\mu_s)^p
\leq
\E\norm{Y_{t\wedge\cdot}-Y_{s\wedge\cdot}}_\infty^p,
\]
and the right-hand side converges to zero as $t\to s$ by path continuity and
dominated convergence.

Set
\[
a_t:=b(t,Y_{t\wedge\cdot},\mu_t),
\qquad
H_t:=[\sigma(t,Y_{t\wedge\cdot},\mu_t) \sigma_0(t,Y_{t\wedge\cdot},\mu_t)].
\]
The process $t\mapsto Y_{t\wedge\cdot}$ is progressively measurable as a
$\Cc^d$-valued process, while $t\mapsto\mu_t$ is deterministic and
continuous. Hence $a$ and $H$ are progressively measurable. 

It remains to control the random conditional-law term uniformly in time.
Set
\[
M_t
:=
\E\left[
    \norm{Y}_\infty^p
    \,\middle|\,
    \Fc_t^{W^0}
\right],
\qquad
0\leq t\leq T.
\]
Then $(M_t)_{0\leq t\leq T}$ is a nonnegative martingale, and
\[
\Wc_p(\mu_t,\delta_{\mathbf0})^p
=
\E\left[
    \norm{Y_{t\wedge\cdot}}_\infty^p
    \,\middle|\,
    \Fc_t^{W^0}
\right]
\leq
M_t.
\]
Doob's maximal inequality, applied with exponent $r/p>1$, and conditional
Jensen's inequality give
\begin{align}
\E\left[
    \sup_{0\leq t\leq T}
    \Wc_p(\mu_t,\delta_{\mathbf0})^r
\right]
\leq
\E\left[
    \sup_{0\leq t\leq T}
    M_t^{r/p}
\right]
\leq
C_{p,r}\E M_T^{r/p}
\leq
C_{p,r}\E\norm{Y}_\infty^r
<\infty.
\label{eq:conditional-law-flow-moment}
\end{align}

The linear-growth estimate now gives
\[
\abs{a_t}+\norm{H_t}
\leq
C\left(
    1+\norm{Y}_\infty
    +\Wc_p(\mu_t,\delta_{\mathbf0})
\right),
\qquad
0\leq t\leq T.
\]
Consequently, with $\Lambda_Y$ defined as in Assumption
\ref{assum:regularity},
\[
\Lambda_Y
\leq
C\left(
    1+\norm{Y}_{\infty}
    +\left(\E\norm{Y}_\infty^p\right)^{1/p}
\right),
\]
and therefore
\[
\E\Lambda_Y^r<\infty.
\]
Thus Assumption~\ref{assum:regularity} holds for $Y$ with $\rho=r$.
Proposition~\ref{prop:endpoint-functional-quantization}, applied with
$q=p$, gives the claimed quantization estimate.
\end{proof}

\subsubsection{Synchronous-coupling stability}

We now transfer the empirical approximation of the law of the limit process to the interacting system. The coupling separates the total error into a conditionally i.i.d.\ empirical term and a dynamical stability term. The latter is estimated in $L^p$; the treatment of the martingale integrals differs according to whether $p\geq2$ or $p<2$, but both regimes close by Gr\"onwall's inequality.

\begin{proof}[Proof of Proposition~\ref{thm:app}]
For every $i\in\{1,\ldots,N\}$, let $Y^i$ solve the limit equation with
the same initial value and the same Brownian motion as the $i$th interacting
particle, i.e., 
\[
Y_t^i
=
\xi^i
+
\int_0^t b(s,Y_{s\wedge\cdot}^i,\mu_s)\dd s
+
\int_0^t \sigma(s,Y_{s\wedge\cdot}^i,\mu_s)\dd W_s^i
+
+
\int_0^t \sigma_0(s,Y_{s\wedge\cdot}^i,\mu_s)\dd W_s^0.
\]
Because the stopped-path law flow $(\mu_s)_{0\leq s\leq T}$ is
adapted to the common noise filtration and the pairs $(\xi^i,W^i)$ are independent, the processes
$Y^1,\ldots,Y^N$ are conditionally independent copies of $Y$ given $W^0$. Introduce their empirical full path measure and empirical stopped-path
measure by
\[
\nu^N
:=
\frac1N\sum_{i=1}^N\delta_{Y^i},
\qquad
\nu_s^N
:=
\frac1N\sum_{i=1}^N\delta_{Y_{s\wedge\cdot}^i}.
\]
Lemma~\ref{lem:nonlinear-quantization} and Corollary
\ref{coro:conditional_copy}, applied with $\Gc = \Fc^{W^0}_T$,$\rho=r$, show that
\[
a_N
:=
\left(
\E\Wc_p(\nu^N,\mu)^p
\right)^{1/p}
\leq
C(\log N)^{-1/2}.
\]
Set
\[
\Delta_t^i:=X_t^{i,N}-Y_t^i.
\]
Subtracting the two stochastic equations gives
\[
\begin{aligned}
\Delta_t^i
=
\int_0^t
\bigl[
 b(s,X_{s\wedge\cdot}^{i,N},\mu_s^N)
 -b(s,Y_{s\wedge\cdot}^i,\mu_s)
\bigr]\dd s
&+
\int_0^t
\bigl[
 \sigma(s,X_{s\wedge\cdot}^{i,N},\mu_s^N)
 -\sigma(s,Y_{s\wedge\cdot}^i,\mu_s)
\bigr]\dd W_s^i
\\
&+
\int_0^t
\bigl[
 \sigma_0(s,X_{s\wedge\cdot}^{i,N},\mu_s^N)
 -\sigma_0(s,Y_{s\wedge\cdot}^i,\mu_s)
\bigr]\dd W_s^0.
\end{aligned}
\]
By Assumption~\ref{ass:coefficients}, the absolute value of the drift
difference and the Hilbert--Schmidt norm of the diffusion difference are
both bounded, up to the factor $L$, by
\[
\norm{X_{s\wedge\cdot}^{i,N}-Y_{s\wedge\cdot}^i}_\infty
+\Wc_p(\mu_s^N,\mu_s)
=
\sup_{0\leq u\leq s}\abs{\Delta_u^i}
+\Wc_p(\mu_s^N,\mu_s).
\]
Synchronous coupling gives
\[
\Wc_p(\mu_s^N,\nu_s^N)^p
\leq
\frac1N\sum_{j=1}^N
\sup_{0\leq u\leq s}\abs{\Delta_u^j}^p.
\]
The contraction property of Wasserstein distance yields
\[
\Wc_p(\nu_s^N,\mu_s)
\leq
\Wc_p(\nu^N,\mu).
\]
Consequently, we have that for every $t\leq T$,
\begin{equation}
\Wc_p(\mu_s^N,\mu_s)^p
\leq
C\left[
    \frac1N\sum_{j=1}^N
    \sup_{0\leq u\leq s}\abs{\Delta_u^j}^p
    +\Wc_p(\nu^N,\mu)^p
\right],\label{eq:app:path-law-pointwise}
\end{equation}
and
\begin{equation}
\sup_{0\leq s\leq t}\Wc_p(\mu_s^N,\mu_s)^p
\leq
C\left[
    \frac1N\sum_{j=1}^N
    \sup_{0\leq u\leq t}\abs{\Delta_u^j}^p
    +\Wc_p(\nu^N,\mu)^p
\right].\label{eq:app:path-law-uniform}
\end{equation}
Write
\[
U_N(t)
:=
\frac1N\sum_{i=1}^N
\E\sup_{0\leq u\leq t}\abs{\Delta_u^i}^p.
\]
To proceed with the proof, we discuss two seperate cases by the value of $p$ below.
\medskip

\noindent\textbf{Case $p\geq2$.}
H\"older's inequality for the time integral and the
Burkholder--Davis--Gundy inequality give, for $t\leq T$,
\[
\E\sup_{0\leq u\leq t}\abs{\Delta_u^i}^p
\leq
C\int_0^t
\E\left[
\sup_{0\leq u\leq s}\abs{\Delta_u^i}^p
+\Wc_p(\mu_s^N,\mu_s)^p
\right]\dd s.
\]
Averaging over $i$ and using \eqref{eq:app:path-law-uniform} gives
\[
U_N(t)
\leq
C\int_0^t\bigl(U_N(s)+a_N^p\bigr)\dd s.
\]
Gronwall's inequality therefore yields
$U_N(T)\leq Ca_N^p$.

\medskip
\noindent\textbf{Case $p\in[1,2)$.}
For notational convenience, set
\[
h_s^i
:=
\sup_{0\leq u\leq s}\abs{\Delta_u^i}
+\Wc_p(\mu_s^N,\mu_s).
\]
H\"older's inequality for the drift integral and the
Burkholder--Davis--Gundy inequality give
\[
\E\sup_{0\leq u\leq t}\abs{\Delta_u^i}^p
\leq
C\int_0^t\E(h_s^i)^p\dd s
+
C\E\left(\int_0^t(h_s^i)^2\dd s\right)^{p/2}.
\]
Since $2-p>0$, one has
\[
\left(\int_0^t(h_s^i)^2\dd s\right)^{p/2}
\leq
\left(\sup_{0\leq u\leq t}(h_u^i)^p\right)^{(2-p)/2}
\left(\int_0^t(h_s^i)^p\dd s\right)^{p/2}.
\]
Young's inequality with conjugate exponents $2/(2-p)$ and $2/p$ shows that,
for every $\varepsilon>0$,
\[
\left(\int_0^t(h_s^i)^2\dd s\right)^{p/2}
\leq
\varepsilon\sup_{0\leq u\leq t}(h_u^i)^p
+
C_{p,\varepsilon}\int_0^t(h_s^i)^p\dd s.
\]
It follows that
\begin{equation}
\E\sup_{0\leq u\leq t}\abs{\Delta_u^i}^p
\leq
\varepsilon\E\sup_{0\leq u\leq t}(h_u^i)^p
+
C_{\varepsilon}\int_0^t\E(h_s^i)^p\dd s,
\label{eq:app:stability-p-lt2}
\end{equation}
after rescaling $\varepsilon$ to absorb the multiplicative constant.

Equations \eqref{eq:app:path-law-uniform} and
\eqref{eq:app:path-law-pointwise} imply, respectively,
\begin{equation}
\frac1N\sum_{i=1}^N
\E\sup_{0\leq s\leq t}(h_s^i)^p
\leq
C\bigl(U_N(t)+a_N^p\bigr),\label{eq:app:h-uniform}
\end{equation}
and, for every $s\leq t$,
\begin{equation}
\frac1N\sum_{i=1}^N\E(h_s^i)^p
\leq
C\bigl(U_N(s)+a_N^p\bigr).\label{eq:app:h-pointwise}
\end{equation}
Averaging \eqref{eq:app:stability-p-lt2} over $i$ and using
\eqref{eq:app:h-uniform} and \eqref{eq:app:h-pointwise} gives
\[
U_N(t)
\leq
C\varepsilon\bigl(U_N(t)+a_N^p\bigr)
+
C_{\varepsilon}\int_0^t
\bigl(U_N(s)+a_N^p\bigr)\dd s.
\]
Choose $\varepsilon>0$ sufficiently small that $C\varepsilon\leq1/2$.
The term involving $U_N(t)$ can then be absorbed into the left-hand side,
and Gronwall's inequality yields
\[
U_N(T)\leq Ca_N^p.
\]
Together with the bound $U_N(T)\leq Ca_N^p$ established for $p\geq2$,
the preceding estimate proves the same bound for every $p\geq1$. Synchronous coupling thus gives
\[
\left(
\E\Wc_p(\mu^N,\nu^N)^p
\right)^{1/p}
\leq
U_N(T)^{1/p}
\leq
Ca_N.
\]
Finally, the triangle inequality for $\Wc_p$ and Minkowski's inequality imply
\[
\begin{aligned}
\left(
\E\Wc_p(\mu^N,\mu)^p
\right)^{1/p}
\leq
\left(
\E\Wc_p(\mu^N,\nu^N)^p
\right)^{1/p}
+
\left(
\E\Wc_p(\nu^N,\mu)^p
\right)^{1/p}  
\leq
Ca_N
\leq
C(\log N)^{-1/2}.
\end{aligned}
\]
The stated mean estimate follows from H\"older's inequality.
\end{proof}

\section*{Statements and Declarations}

\noindent\textbf{Funding.}
Fengyi Yuan is supported by the Chinese University of Hong Kong (Shenzhen)
start-up fund under UDF01004253.

\noindent\textbf{Competing interests.}
The authors have no relevant financial or non-financial interests to disclose.

\noindent\textbf{Author contributions.}
Both authors contributed to the conception and design of the study, the
development of the theoretical results, and the writing of the manuscript.
Both authors read and approved the final manuscript.

\noindent\textbf{Data availability.}
No datasets were generated or analyzed during the current study.

\end{document}